\newcolumntype{L}[1]{>{\raggedright\let\newline\\\arraybackslash\hspace{0pt}}m{#1}}
\newcolumntype{C}[1]{>{\centering\let\newline\\\arraybackslash\hspace{0pt}}m{#1}}
\newcolumntype{R}[1]{>{\raggedleft\let\newline\\\arraybackslash\hspace{0pt}}m{#1}}
\newcolumntype{P}[1]{>{\centering\arraybackslash}p{#1}}
\newtheorem{theorem}{Theorem}[section]
\newtheorem{proposition}[theorem]{Proposition}
\newtheorem{corollary}[theorem]{Corollary}
\newtheorem{lemma}[theorem]{Lemma}
\theoremstyle{definition}
\theoremstyle{remark}
\newtheorem{remark}[theorem]{Remark}
\newtheorem{example}[theorem]{Example}
\newcommand{\mB}{\mathbb B}
\newcommand{\mR}{\mathbb R}
\newcommand{\mZ}{\mathbb Z}
\newcommand{\mN}{\mathbb N}
\newcommand{\mL}{\mathbb L}
\newcommand{\sB}{{\mathcal B}}
\newcommand{\sC}{{\mathcal C}}
\newcommand{\sL}{{\mathcal L}}
\newcommand{\sK}{{\mathcal K}}
\newcommand{\sP}{{\mathcal P}}
\newcommand{\bG}{{\bf G}}
\newcommand{\bpm}[1]{\begin{pmatrix}#1\end{pmatrix}}
\newcommand{\gs}{\gtrsim}
\newcommand{\rd }{{\rm  d}}
\newcommand{\e}{{\rm  e}}
\newcommand{\eps}{\varepsilon}
\newcommand{\fpd}[2]{#1 \lvert #1 \rvert^{#2}}
\numberwithin{equation}{section}
\numberwithin{figure}{section}
\numberwithin{table}{section}
\pgfplotsset{compat=newest}
\tikzstyle{sum} = [draw, fill=white, thick, circle, inner sep=2pt,minimum size=2pt]
\tikzstyle{sum_black} = [draw, fill=black, thick, circle, inner sep=1pt,minimum size=1pt]
\title{Passivity theorems for input-to-state stability of forced {L}ur'e inclusions and equations, and consequent entrainment-type properties}
\author{Chris Guiver\thanks{School of Computing, Engineering and the Built Environment, Edinburgh Napier University, UK. Email: \texttt{c.guiver@napier.ac.uk}}}
\date{June 2024}	
\let\thetitle\@title
\let\theauthor\@author
\let\thedate\@date
\begin{document}
\maketitle

\begin{abstract}
A suite of input-to-state stability results are presented for a class of forced differential inclusions, so-called Lur'e inclusions. As a consequence, semi-global incremental input-to-state stability results for systems of forced Lur'e differential equations are derived. The results are in the spirit of the passivity theorem from control theory as both the linear and nonlinear components of the Lur'e inclusion (or equation) are assumed to satisfy passivity-type conditions. These results provide a basis for the analysis of forced Lur'e differential equations subject to (almost) periodic forcing terms and, roughly speaking, ensure the existence and attractivity of (almost) periodic state- and output-responses, comprising another focus of the present work. One ultimate aim of the study is to provide a robust and rigorous theoretical foundation for a well-defined and tractable ``frequency response'' of forced Lur'e systems.
\end{abstract}

{\bfseries Keywords.} almost periodic function, differential inclusion, entrainment, incremental stability, input-to-state stability, Lur'e system.

{\bfseries MSC(2020).}
34A12, 
34A60, 
34C27, 
34D20, 
93C10, 
93C35, 
93D09, 
93D15  

\section{Introduction}

%
%
We consider stability properties of the class of feedback connections depicted in Figure~\ref{fig:lure}, comprising a linear system $\Sigma$ in the forward path and a static $F$ nonlinearity in the feedback path. Such configurations are often called forced Lur'e (also Lurie or Lurye) systems, and the nonlinearity $F$ may be a correspondence (a set-valued map) or a usual function (a singleton-valued map), leading to Lur'e inclusions or Lur'e equations, respectively. The term $v$ in Figure~\ref{fig:lure} is an exogenous signal which we call a forcing term and, depending on the context, may denote a control or a disturbance.

%
%
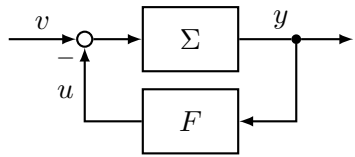
\begin{figure}[h!]
\centering
	\begin{tikzpicture}
		\coordinate (O) at (0,0);
		\node[draw, thick, minimum width=0.5cm, minimum height=0.85cm, anchor=south west, text width=1cm, align=center] (P) at (O) {$\Sigma$};
		\node[draw, thick, minimum width=0.5cm, minimum height=0.85cm, text width=1cm, align=center] (C) at ($(P.270) - (0,0.65)$) {$F$};
		\node[sum_black] (Y) at ($(P.0) +(0.75,0)$) {};
  	\node[sum] (U) at ($(P.180) - (0.75,0)$) {};
   \node[] (Up) at ($(U) - (0.25,0.25)$) {{\footnotesize$-$}};
		\draw[thick] (P) -- (Y.180)node[above, pos=0.8] {$y$};
		\draw[thick,-latex] (Y) -- ($(Y)+(0.75,0)$);
		\draw[thick,-latex] (Y.270) |- (C.0); 
	\draw[thick, -latex] ($(U)-(1,0)$) -- (U)node[above, pos=0.5] {$v$};	
  %
		%
		\draw[thick,-latex] (C.180)  -|  (U) node [left, pos=0.7]{$u$};
		\draw[thick, -latex] (U) -- (P.180);
	\end{tikzpicture}
	\caption{Forced Lur'e system}
	\label{fig:lure}
\end{figure}
%

%
%
 Lur'e systems date back to the 1940s, and the work of the Soviet scientists A.\ I.\ Lur'e and V.\ N.\ Postnikov~\cite{MR0011360}. Their study has, in part, been popularized by the famous Aizerman Conjecture~\cite{aizerman1949}, which is false in general with counterexamples appearing in~\cite{fitts1966two}, 
yet is a conjecture with a long history in automatic control. It, along with the subsequent Kalman Conjecture~\cite{kalman1957physical}, has been considered in both the former Soviet and the Western literature --- see, for example,~\cite{boiko2020counter,
leonov2010algorithm,
seiler2020construction} for a brief overview.
%
%
More generally, the study of stability properties, meaning a number of different possible notions, of Lur'e systems is called {\em absolute stability theory}, and usually adopts the position that the nonlinear term $F$ is uncertain. Consequently, sufficient conditions for stability are sought which are valid for a class of nonlinear terms, usually via the the interplay of frequency-domain properties of the linear component and sector properties of the class of nonlinearities. Results in absolute stability theory traditionally come in one of two strands. The first adopts Lyapunov approaches to deduce global asymptotic stability of unforced Lur'e systems (that is, $v=0$), as is the approach across~\cite{MR2381711,MR2116013,k02}. The second strand is an input-output approach, pioneered by Sandberg and Zames in the 1960s, to infer $L^2$- and $L^{\infty}$-stability; see, for example,~\cite{MR0490289,MR1946479}. 

Absolute stability theory has garnered much academic interest (see, for instance, the extensive literature reviewed in~\cite{liberzon2006essays}). Indeed, the literature related to the stability and convergence properties of Lur'e systems is expansive and, writing in 2024, is still an active area of research.  Of a number of absolute stability criteria, we make specific mention of the classical passivity theorems, see for example~\cite[Chapters 6,7]{k02} (more specifically, in the ``infinite sector case'' such as~\cite[Theorem 7.1, p.\ 265]{k02} --- there with~$K_1 = 0$). In this setting, the passivity theorem is also called the positivity theorem by some authors, as in~\cite[Theorem 4.1]{haddad1993explicit}. In any case, these results play an inspirational role in the present development.  They are essentially nonlinear generalizations of the passivity theorem for linear control systems which traces its roots back to the work of Zames~\cite{zames1966input} and, recall, states that the feedback connection of an $L^2$-input-output stable and strictly passive positive real plant, and passive controller, is itself passive and $L^2$-input-output stable. 

%
%
Input-to-State Stability (ISS), dating back to the 1989 work of Sontag~\cite{sontag1989smooth}, arose a suite of stability concepts for controlled (or forced) systems of nonlinear differential equations. In overview, ISS ensures natural boundedness properties of the state, in terms of (functions of) the initial conditions and forcing terms. Practically, ISS ensures that ``small'' external noise, disturbance or unmodelled dynamics result in a correspondingly ``small'' effect on the resulting state. ISS has been well-studied since its inception with subsequent developments in the 1990s by Sontag and others across, for example,~\cite{jiang1994small,sontag1995characterizations,sontag1996new}, and a number of variants appearing including so-called {\em integral} ISS~\cite{angeli2000characterization,sontag1998comments}, {\em strong} ISS~\cite{chaillet2014combining}, {\em exponential} ISS~\cite{MR4591532} and {\em finite-time} ISS~\cite{MR4688194,MR2665472}. The ISS property has been developed in discrete time, from~\cite{jiang2001input} onwards,  as well as for switched systems~\cite{xie2001input}, and infinite-dimensional control systems with recent reviews~\cite{MR4131339,mironchenko2024input}. There is now a vast literature on ISS, with surveys including~\cite{dashkovskiy2011input,sontag2008input}, as well as the recent monograph~\cite{MR4592570}. One strength of the input-to-state stability paradigm is that it both encompasses and unifies asymptotic and input-output approaches to stability, indeed, to quote~\cite[Preface]{karafyllis2011stability}: {\em ``ISS has bridged the gap which previously existed between the input–output and the state-space methods, two popular approaches within the control systems community.''} 
%
%
As documented in~\cite{MR4720562}, considerable effort has been dedicated to establishing ISS properties for Lur'e systems, originating in the seminal work~\cite{arcak2002input}. To summarise, many classical absolute stability criteria, when suitably adapted, do ensure ISS and ISS-type properties; see, for example~\cite{MR4720562,MR4023134,jayawardhana2009input,jayawardhana2011circle}.

%
%
Incremental stability is broadly concerned with bounding the difference of two arbitrary trajectories of a given system in terms of the difference between initial states and, if included, input terms; see, for instance~\cite{angeli2002lyapunov,angeli2009further,sepulchre2022incremental}. These aims align with those of contraction analysis or contraction theory as in~\cite{aminzare2014contraction,
MR1632102,MR4348709} or the recent text~\cite{FB-CTDS}, and also of convergent systems; see~\cite{rwm13} and the references therein. The works~\cite{MIC-2010-3-2,rwm13} compare and contrast incremental stability, contraction analysis, and convergent systems concepts. For linear control systems, incremental stability concepts coincide with their corresponding stability versions via the superposition principle, and this is not the case in general for nonlinear systems. The importance of incremental stability properties is recognised across works such as~\cite{chaffey2023monotone,forni2018differential,ofir2024contraction,sepulchre2022incremental}. By way of utility, incremental stability results provide a toolbox for nonlinear observer design, the study of synchronization-type problems, and the analysis of convergent or (almost) periodic inputs in forced differential equations~\cite{gilmore2021incremental}, a property broadly termed called ``entrainment'' (see, for instance~\cite[Chapter 7]{jordan2007nonlinear}). Recent applications of incremental stability include to establishing robustness properties of certain neural network architectures~\cite{centorrino2023euclidean,revay2020lipschitz}, as well as those discussed in~\cite{giaccagli2023further}. 

%
%
Here, we establish a suite of ISS results for Lur'e differential inclusions in Section~\ref{sec:inclusion}. These, and all our results, are in the spirit of the above-mentioned passivity/positivity theorem for absolute stability, and since extended to ensure ISS for Lur'e differential equations in~\cite[Theorem 1]{arcak2002input}. Results from~\cite{teel2000smooth} play an essential role in our argumentation for Lur'e inclusions.
In addition to their own interest, we consider Lur'e inclusions to facilitate our second set of main results, namely, semi-global incremental (integral) ISS results for Lur'e differential equations, the subject of Section~\ref{sec:incremental}. Technically, the treatment of Lur'e inclusions provides a convenient framework for addressing certain uniformity issues which arise when establishing incremental stability properties of whole families of Lur'e differential equations.  The semi-global incremental (integral) ISS concept here means that, for arbitrary bounded sets of initial conditions and forcing terms there exist comparison functions such that an incremental (integral) ISS estimate holds, and is the same concept which appears in~\cite{MR4052324,gilmore2021incremental}. For all practical purposes, semi-global stability notions seem to be at least as interesting as global stability concepts, and impose much less restrictive conditions on the model data, as we discuss.  As already mentioned, semi-global incremental (integral) ISS results in turn provide a framework for rigorously studying the (almost or asymptotically almost) periodic state- and output-response of forced Lur'e equations to (almost or asymptotically almost) periodic forcing terms, in the sense of Bohr or, more generally, in the sense of Stepanov (see, for example \cite[Section 4.7]{MR0275061}). Our convergence results in this setting appear in Section~\ref{sec:ap}. 

%
%
Thus, on the one hand, our work continues the analysis of almost periodic differential equations from the perspective of mathematical systems and control theory. On the other hand, the current work lays the theoretical foundations for a well-defined and tractable ``frequency response'' of forced Lur'e systems, beyond that of the harmonic balance method~\cite{gilmore1991nonlinear}. Indeed, one motivating application is to designing extracted-energy maximizing controls for wave-energy converters (WECs), a timely renewable electrical energy generation problem, in a field where frequency-domain methods are  prevalent; see the comments in~\cite{bacelli2020comments}, for instance. Of course, this traditionally requires linear control systems, so that the ability to move beyond linear systems and treat nonlinear models for WECs is important, as argued and evidenced in~\cite{windt2021reactive}. Very briefly, heaving point-absorber WECs may be expressed as forced Lur'e differential equations, with forcing terms which are likely to be almost periodic corresponding to the wave excitation force~\cite{merigaud2017free}, whence fall into the scope of the present work. However, in order to maintain a singular focus and reasonable length, the application of the present results to frequency domain methods in nonlinear wave-energy conversion problems appears in the conference proceedings~\cite{guiver2024ccta}.

%
%
In terms of scope of results, there is overlap with the present work and others including the author, namely~\cite{gilmore2020infinite,MR4052324,gilmore2021incremental}. The key novelty and contribution of the present work is technical, yet is important, and is that the nonlinearities considered here may belong to an infinite sector, such as ``superlinear'' nonlinearities $F(z) = \pm z \lvert z \rvert^d$ (sign depending on feedback convention used) which find relevance in a wave-energy applications perspective~\cite{fusco2014robust}. The works~\cite{gilmore2020infinite,MR4052324,gilmore2021incremental} all impose linear boundedness of the nonlinearity (in other words, a finite-sector condition is imposed, and this structure is essential).

%
%
The content of Sections~\ref{sec:inclusion}, \ref{sec:incremental} and~\ref{sec:ap} has been outlined above. The rest of this work is organised as follows. Preliminary material appears in Section~\ref{sec:preliminaries}. Examples are presented in Section~\ref{sec:examples}, and summarising comments appear in Section~\ref{sec:summary}. The proofs of certain technical results are relegated to the Appendix.


\subsection*{Notation}

We use standard mathematical notation, and highlight only a few matters. We let~$\| \cdot \|$ denote the usual two-norm on Euclidean space~$\mR^n$, induced by the usual inner product on~$\mR^n$ denoted~$\langle \cdot, \cdot \rangle$. A square matrix is called Hurwitz (also sometimes asymptotically stable) if every eigenvalue has negative real part. The notation $\mB(z,r)$ denotes the open ball in Euclidean space (consistent with the context) with center $z$ and radius $r$.

As usual, we let~$\sP$ denote the set of functions~$\mR_+ \to \mR_+$ which are continuous, zero at zero, and positive for positive arguments (such functions are also sometimes called positive definite). We let~$\sK$ denote the subset of~$\sP$ of strictly increasing functions. The subset of~$\sK$ of unbounded functions is denoted~$\sK_\infty$ (such functions are also sometimes called proper).  We denote by
$\sK\sL$  the set of all functions~$\psi: \mR_+\times \mR_+\to\mR_+$ with the following properties: for each fixed~$t\in\mR_+$, the function~$\psi(\,\cdot\,,t)$ is in~$\sK$, and for each fixed~$s\in\mR_+$, the function~$\psi(s,\,\cdot\,)$ is non-increasing and~$\phi(s,t)\to 0$ as~$t\to\infty$.

Since we consider upper bounds in terms of~$\sK_\infty$ functions, by~\cite[Lemma 2.5]{MR1643670}, we may assume that such upper bounds are continuously differentiable (in fact they may be chosen to be infinitely differentiable). Similarly, we shall make use of lower bounds in terms of functions in~$\sP$. A combination of~\cite[Lemma 18]{MR3245919} and~\cite[Lemma 2.5]{MR1643670} means that we may assume that such lower bounds are also continuously differentiable.

We call a function~$g : \mR_+ \to \mR_+$ {\em linearly bounded} if there exists~$a >0$ such that~$\lvert g(s) \rvert \leq a \lvert s \rvert$ for all~$s \in \mR_+$.

\section{Lur'e differential inclusions --- preliminaries }\label{sec:preliminaries}

We gather preliminary material on Lur'e differential inclusions.

Set~$\mL:=\mR^{n \times n} \times \mR^{n \times m} \times \mR^{m\times n}$ for fixed positive integers~$m$ and~$n$.
Consider the forced Lur'e differential inclusion
\begin{equation}\label{eq:lure_inclusion}
\dot x(t)-Ax(t)-Bv(t)\in -BF(Cx(t)),\quad t\geq 0,
\end{equation}
where~$(A,B,C)\in\mL$,~$v\in L^1_{\rm loc}(\mR_+,\mR^m)$, and~$F: \mR^m \rightrightarrows \mR^m$ is a correspondence (also termed a set-valued map). Except where specifically mentioned, we shall assume that~$F$ is upper hemicontinuous (also called upper semicontinuous by some authors, see~\cite[p.\ 558]{MR2378491}) which has values that are non-empty, compact and convex subsets of~$\mR^m$. We say that~\eqref{eq:lure_inclusion} is {\em unforced} if~$v=0$. Occasionally, we will refer to~\eqref{eq:lure_inclusion} as Lur'e inclusion~$(A,B,C,F)$.

Let~$v\in L^1_{\rm loc}(\mR_+,\mR^m)$ be given and let~$0<\tau\leq\infty$. A function~$x\in W^{1,1}_{\rm loc}([0,\tau),\mR^n)$ is said to be a solution of~\eqref{eq:lure_inclusion} on~$[0,\tau)$ if~\eqref{eq:lure_inclusion} holds for almost every~$t\in[0,\tau)$. A solution of~\eqref{eq:lure_inclusion} on~$\mR_+ := [0,\infty)$ is called a global solution. We define two sets of behaviours of~\eqref{eq:lure_inclusion} by
\begin{align*}
  \sB_{\rm inc}&:= \big\{(v,x)\in L^1_{\rm loc}(\mR_+,\mR^m)\times
    W^{1,1}_{\rm loc}(\mR_+,\mR^n)\; : \; \text{$(v,x)$ satisfies~\eqref{eq:lure_inclusion}
a.e. on~$\mR_+$}\big\} \\
\text{and} \quad \sB_{\rm inc}^\infty &:=\big\{ (v,x)\in  \sB_{\rm inc}  \; : \; v\in L^\infty_{\rm loc}(\mR_+,\mR^m) \big\}\,.
\end{align*}
An element~$(v,x)\in\sB_{\rm inc}$ is also called a {\em trajectory} of~\eqref{eq:lure_inclusion}. We shall occasionally write~$\sB_{\rm inc}(F)$ or $\sB_{\rm inc}^\infty(F)$ to emphasise the choice of inclusion~$F$ in~\eqref{eq:lure_inclusion}. Note that~$(0,0) \in \sB_{\rm inc}$ when~$F(0) = \{0\}$.

Regarding the existence of solutions of~\eqref{eq:lure_inclusion}, we record the following fact, which is a consequence of~\cite[Corollary 5.2]{MR1189795}:
\begin{enumerate}[label = {\bfseries (\Alph*)}, start = 6]
    \item \label{ls:F} for all~$v\in L^1_{\rm loc}(\mR_+,\mR^n)$ and every~$x_0\in\mR^n$, there exist~$0<\tau\leq\infty$ and a solution~$x$ of~\eqref{eq:lure_inclusion} on~$[0,\tau)$ such that~$x(0)=x_0$. If~$\tau<\infty$ and~$x$ is bounded, then the solution~$x$ can be extended beyond~$\tau$, that is,    there exist~$\tau<\tilde\tau\leq\infty$ and a solution~$\tilde x$ of~\eqref{eq:lure_inclusion} on~$[0,\tilde\tau)$ such that~$\tilde x(t)= x(t)$ for all~$t\in[0,\tau)$.
\end{enumerate}
We recall relevant Input-to-State Stability concepts. Assuming that~$F(0) = \{0\}$, the Lur'e inclusion~\eqref{eq:lure_inclusion} is called:
\begin{itemize}
    \item Integral Input-to-State Stable (IISS) with linear IISS gain if there exist~$\psi\in\sK\sL$ and~$\phi\in\sK$ such that
    \begin{equation}\label{eq:iiss_inclusion}
\|x(t)\|\leq\psi(\|x(0)\|,t)+\phi\Big(\int_0^t\|v(s)\|{\rm d}s\Big)\quad\forall\:t\geq 0,\,\,\forall\:
(v,x)\in\sB_{\rm inc}\,.
\end{equation}
\item Input-to-State Stable (ISS) if there exist~$\psi\in\sK\sL$ and~$\phi\in\sK$ such that
\begin{equation}\label{eq:iss_inclusion}
\|x(t)\|\leq \psi(\|x(0)\|,t)+\phi(\|v\|_{L^\infty(0,t)})\quad\forall\:t\geq 0,\,\,\forall\:
(v,x)\in\sB_{\rm inc}^\infty\,. 
\end{equation}
\item Exponentially Input-to-State Stable if there exist~$M, \gamma >0$ such that
\begin{equation}\label{eq:exp_iss_inclusion}
\|x(t)\|\leq M\big(\e^{-\gamma t} \|x(0)\| + \|v\|_{L^\infty(0,t)}\big)\quad\forall\:t\geq 0,\,\,\forall\:
(v,x)\in\sB_{\rm inc}^\infty\,. 
\end{equation}
\end{itemize}
In other words, the exponential ISS property is an ISS estimate of the form~\eqref{eq:iss_inclusion} with
\begin{equation}\label{eq:exp_iss_comparison}
    \phi(s,t) := M\e^{-\mu t} s \quad \text{and} \quad \psi(s) := M s \quad \forall \: (s,t) \in \mR_+\times \mR_+\,.
\end{equation}
Observe that the above definitions involve bounds which are required to hold for trajectories of~\eqref{eq:lure} --- these are vacuously satisfied if the behaviours of~\eqref{eq:lure_inclusion} are empty. Since stability fundamentally relates to long-term properties of solutions, it is therefore arguably more accurate to include the requirement in the above definitions that all solutions are global, at the cost of clunkier formulations. 

%
%
The following result demonstrates that existence of suitable so-called ISS Lyapunov functions is sufficient for extendability of solutions to global solutions, and various ISS notions. Since constructing Lyapunov functions comprises our present line of argument, the issues discussed above are moot.

\begin{lemma}\label{lem:inclusion_ISS_lyapunov}
Consider the Lur'e inclusion~\eqref{eq:lure_inclusion} and introduce the statement:
Assume that there exist continuously differentiable~$V : \mR^n \to \mR_+$ and~$\alpha_i \in \sK_\infty$,~$i= 1,2$, such that
\[ \alpha_1(\| z \|) \leq V(\|z\|) \leq \alpha_2(\| z\|) \quad \forall \: z \in \mR^n\,.\]
The following statements hold.
\begin{enumerate}[label = {\rm \bfseries (\arabic*)}, ref = {\rm \bfseries (\arabic*)}]
\item \label{ls:ISS_lyapunov} If there exist~$\alpha_i \in \sK_\infty$,~$i= 3,4$ such that
\begin{equation}\label{eq:iss_lie_dervative}
     \langle (\nabla V)(z), Az - B(w - u)\rangle  \leq -\alpha_3(\|z\|) + \alpha_4(\|u \|) \quad \forall \: w \in F(Cz), \; \forall \: (u,z) \in \mR^m \times \mR^n\,, 
     \end{equation}
then, for every~$v \in L^\infty_{\rm loc}(\mR_+,\mR^n)$, every solution of~\eqref{eq:lure_inclusion} may be extended to a global solution, and~\eqref{eq:lure_inclusion} is ISS. 
\item \label{ls:IISS_lyapunov}  If there exist are~$\mu\in\sP$ and~$\beta>0$ such that
\begin{equation}\label{eq:iiss_lie_dervative}
\langle(\nabla V)(z),Az - B(w - u) \rangle\leq-\mu(\|z\|)+\beta\|u\| \quad \forall \: w \in F(Cz), \; \forall \: (u,z) \in \mR^m \times \mR^n\,,
\end{equation}
then, for every~$v \in L^1_{\rm loc}(\mR_+,\mR^n)$, every solution of~\eqref{eq:lure_inclusion} may be extended to a global solution, and~\eqref{eq:lure_inclusion} is ISS, and~\eqref{eq:lure_inclusion} is IISS with linear IISS gain.
\item If~\ref{ls:ISS_lyapunov} holds and every~$\alpha_i$ may be chosen to be quadratic, that is,~$\alpha_i(s) = c_i s^2$ for~$i = 1,2,3,4$, for positive constants~$c_i >0$, then~\eqref{eq:lure_inclusion} is exponentially ISS. 
\end{enumerate}
\end{lemma}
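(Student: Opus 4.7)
The plan is to deduce part \textbf{(3)} from part \ref{ls:ISS_lyapunov} specialised to the quadratic setting, by retracing the Lyapunov argument and tracking constants explicitly. First, since the quadratic bounds~$\alpha_i(s)=c_i s^2$ with~$c_i>0$ are in particular~$\sK_\infty$-functions satisfying~\eqref{eq:iss_lie_dervative}, statement \ref{ls:ISS_lyapunov} already guarantees that every solution extends to a global solution for every~$v\in L^\infty_{\rm loc}(\mR_+,\mR^m)$. It therefore suffices to upgrade the~$\sK\sL$-estimate of ISS to the exponential estimate~\eqref{eq:exp_iss_inclusion}.

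Fix~$(v,x)\in\sB_{\rm inc}^\infty$. By definition of the inclusion, for almost every~$t\geq 0$ there exists~$w(t)\in F(Cx(t))$ such that~$\dot x(t) = Ax(t)+Bv(t)-Bw(t)$. Applying~\eqref{eq:iss_lie_dervative} with~$z=x(t)$,~$u=v(t)$ and~$w=w(t)$, and using the chain rule (valid for absolutely continuous~$x$ and continuously differentiable~$V$), I obtain
\begin{equation*}
\frac{\rd}{\rd t}V(x(t))=\langle(\nabla V)(x(t)),\dot x(t)\rangle\leq -c_3\|x(t)\|^2+c_4\|v(t)\|^2\quad\text{a.e.}\ t\geq 0.
\end{equation*}
Combining this with the upper bound~$V(x(t))\leq c_2\|x(t)\|^2$ gives the scalar differential inequality
\begin{equation*}
\frac{\rd}{\rd t}V(x(t))\leq -\lambda V(x(t))+c_4\|v(t)\|^2,\qquad \lambda:=c_3/c_2>0.
\end{equation*}

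Applying the comparison principle (equivalently, Grönwall's inequality to~$t\mapsto \e^{\lambda t}V(x(t))$) yields
\begin{equation*}
V(x(t))\leq \e^{-\lambda t}V(x(0))+c_4\int_0^t\e^{-\lambda(t-s)}\|v(s)\|^2\,\rd s\leq \e^{-\lambda t}c_2\|x(0)\|^2+\frac{c_4}{\lambda}\|v\|_{L^\infty(0,t)}^2.
\end{equation*}
Invoking the lower bound~$c_1\|x(t)\|^2\leq V(x(t))$, taking square roots, and using the elementary inequality~$\sqrt{a+b}\leq\sqrt a+\sqrt b$ for~$a,b\geq 0$, I conclude
\begin{equation*}
\|x(t)\|\leq \sqrt{\frac{c_2}{c_1}}\,\e^{-\lambda t/2}\|x(0)\|+\sqrt{\frac{c_4}{c_1\lambda}}\,\|v\|_{L^\infty(0,t)},
\end{equation*}
which is an estimate of the form~\eqref{eq:exp_iss_inclusion} with~$\gamma:=\lambda/2=c_3/(2c_2)$ and~$M:=\max\!\big(\sqrt{c_2/c_1},\sqrt{c_4/(c_1\lambda)}\big)$, as sought.

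The only non-routine point is the first step, namely the measurable selection~$w(\cdot)$: this is implicit in~$(v,x)\in\sB_{\rm inc}^\infty$ and never needs to be constructed explicitly because~\eqref{eq:iss_lie_dervative} is assumed to hold \emph{for all} $w\in F(Cz)$, so the differential inequality for~$V(x(t))$ holds pointwise almost everywhere regardless of which selection realises~$\dot x(t)$. I do not expect any genuine obstacle beyond this; the remainder is standard scalar Grönwall analysis adapted to recover exponential ISS from a quadratic Lyapunov dissipation.
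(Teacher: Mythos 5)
Your Gr\"onwall argument for part \textbf{(3)} is correct and standard, and it is in fact the argument the paper's terse proof refers to implicitly: the paper cites the differential-equations case (Theorem 5.41 of~\cite{MR3288478}), whose quadratic specialisation is exactly the computation you carry out. Reducing $\frac{\rd}{\rd t}V(x(t)) \le -c_3\|x(t)\|^2 + c_4\|v(t)\|^2$ to the scalar inequality $\dot V \le -\lambda V + c_4\|v(t)\|^2$ via the sandwich $c_1\|z\|^2 \le V(z) \le c_2\|z\|^2$, applying the comparison principle, and reading off $M$ and $\gamma$ is the right route, and your remark that the measurable-selection issue is moot because~\eqref{eq:iss_lie_dervative} is quantified over \emph{all} $w \in F(Cz)$ is the correct observation to make.

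The gap is one of scope: you prove only part \textbf{(3)}, invoking part \ref{ls:ISS_lyapunov} to secure global extendability but not proving it, and you do not touch part \ref{ls:IISS_lyapunov} at all. The paper's proof, brief though it is, covers all three parts: it first notes that the Lyapunov bounds force any solution to be bounded on its interval of definition (uniformly in that interval), so that fact~\ref{ls:F} yields extension to a global solution, and only then appeals to the standard ISS/IISS comparison arguments. Since your part \textbf{(3)} depends logically on part \ref{ls:ISS_lyapunov} for global existence, a complete proof needs at least the boundedness-plus-extension step spelled out. Part \ref{ls:IISS_lyapunov} also needs its own treatment: there the dissipation rate $\mu$ is merely positive definite (not class $\sK_\infty$) and the gain term is linear in $\|u\|$ with only $L^1$ forcing, so the Gr\"onwall route you use does not apply directly; the paper handles this by citing~\cite[Theorem 1]{angeli2000characterization}.
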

%
%
\begin{proof}
Let $v \in L^1_{\rm loc}(\mR_+,\mR^m)$ and fix $x(0) \in \mR^n$. Let $x$ denote the corresponding unique solution of~\eqref{eq:lure_inclusion}, which is defined on $[0,\tau)$ for some $0 < \tau \leq \infty$, the existence of which follows from fact~\ref{ls:F}.

Arguing as in the case for differential equations, the existence of $V$ as in statements~\ref{ls:ISS_lyapunov} and~\ref{ls:IISS_lyapunov} ensures that $x$ is bounded on its interval of definition by some $\rho >0$ which is independent of this interval. In the case of statement~\ref{ls:ISS_lyapunov}, we additionally require~$v \in L^\infty_{\rm loc}(\mR_+,\mR^m)$, as imposed.  It now follows from fact~\ref{ls:F} again that $x$ may be extended to a global solution.

The argument that~\eqref{eq:lure_inclusion} is ISS or IISS now proceeds analogously as in the case of differential equations; see, for example, the proof of~\cite[Theorem 5.41]{MR3288478}) or, in the IISS case,~\cite[Theorem 1]{angeli2000characterization}. 
\end{proof}

\section{A passivity theorem for Lur'e differential inclusions}\label{sec:inclusion}

Here we present sufficient conditions for a class of Lur'e inclusions to admit a range of input-to-state stability properties. Our first main result is Theorem~\ref{thm:iss_inclusion} below. Before stating the theorem, we describe the correspondences which define the Lur'e differential inclusions presently under consideration. Fix continuously differentiable~$\theta \in \sK_\infty$,~$\alpha : \mR_+ \to \mR_+$ with~$\alpha \leq \theta$,~$c, \mu >0$,~$c \mu \geq 1$, and define~$F : \mR^m \rightrightarrows \mR^m$ by
\begin{equation}\label{eq:F_passive_inclusion}
    F(y) : = \big\{ w \in \mR^m \: : \: \| w \| \leq \theta(\|y\|),  \; \langle w,y\rangle \geq \| y \| \alpha(\|y \|), \; \& \; c\langle w,y\rangle \geq \| w\| \; \text{if~$\| y \| > \mu$}\big\}\,.
\end{equation}
It shall be convenient to record the defining properties of~$F(y)$ individually, that is, 
\begin{subequations}\label{eq:F_sector}
\begin{align}
\| w \| &\leq \theta(\|y\|) & \forall \: w &\in F(y), \; \forall \: y \in \mR^m\,, \label{eq:F_sector_0} \\
    \langle w,y\rangle &\geq \| y \| \alpha(\|y \|) &\forall \: w &\in F(y), \; \forall \: y \in \mR^m \,, \label{eq:F_sector_1} \\
    \text{and} \quad c\langle w,y\rangle &\geq \| w\| &  \forall \: w &\in F(y), \; \forall \: y \in \mR^m \; \text{with~$\| y\| > \mu$}\,. \label{eq:F_sector_2}
\end{align}
\end{subequations}
Some remarks are in order. The hypothesis~$c \mu \geq 1$ is necessary~\eqref{eq:F_sector_2} to hold.
The assumption that~$\alpha(s) \leq \theta(s)$ implies that~$F(y)$ is non-empty for all~$y \in \mR^m$. Further, condition~\eqref{eq:F_sector_0} entails that~$F(0) = \{0\}$ and also that~$F(y)$ is bounded for every~$y \in \mR^m$. Since~$F(y)$ is closed for every~$y \in \mR^m$, we conclude that~$F$ takes compact values.  Furthermore, the correspondence~$F$ is convex-valued and upper hemicontinuous. Proofs of these claims appear in the Appendix.

The properties~\eqref{eq:F_sector} defining~$F$ are set-valued versions of the hypotheses appearing on the nonlinear function~$\phi$ in~\cite[Theorem 1]{arcak2002input}. It is clear that hypothesis~\eqref{eq:F_sector_2} follows immediately from~\eqref{eq:F_sector_1} with~$\mu = c = 1$ in the case~$m=1$. Indeed, here
\[ w y = \langle w, y \rangle \geq | y | \alpha(|y |) \geq 0 \quad \forall \: w \in F(y), \; \forall \: y \in \mR\,,\]
and so~$\lvert w \rvert \leq \lvert w y \rvert  =w y = \langle w, y \rangle$ whenever~$\lvert y \rvert >1$. This redundancy of~\eqref{eq:F_sector_2} in the~$m=1$ case parallels the situation in~\cite{arcak2002input}, as noted in~\cite[Remark 2]{arcak2002input}.

%
%
\begin{theorem}\label{thm:iss_inclusion}
Consider the Lur'e inclusion~\eqref{eq:lure_inclusion} with~$F$ as in~\eqref{eq:F_passive_inclusion}.  Assume that the pair~$(C,A)$ is detectable, and that there exists a symmetric positive semi-definite~$P \in \mR^{n\times n}$ such that
\begin{equation}\label{eq:P_lmi_condition}
\bpm{A^\top P + PA & PB - C^\top \\ B^\top P - C & 0 } \leq 0\,.
\end{equation}
The following statement holds:
\begin{enumerate}[label = {\rm \bfseries (\arabic*)}, ref = {\rm \bfseries (\arabic*)}]
\item \label{ls:inclusion_ISS} if~$\alpha \in \sK_\infty$, then~\eqref{eq:lure_inclusion} is ISS.
\end{enumerate}
Now assume that~$\alpha \in \sK_\infty$ admits the lower bound~$\alpha(s) \geq \eps_0 s$ for some~$\eps_0 >0$. Furthermore,
\begin{enumerate}[resume*]
\item \label{ls:inclusion_sg_exp_ISS}  for every~$R>0$, there exist~$M, \gamma >0$ such that inequality~\eqref{eq:exp_iss_inclusion} holds for all~$x(0) \in \mR^n$ and~$v\in L^\infty(\mR_+,\mR^m)$ with~$\|x(0)\| + \| v\|_{L^\infty} \leq R$.
\item \label{ls:inclusion_exp_ISS} if~$\theta$ is linearly bounded, then~\eqref{eq:lure_inclusion} is exponentially ISS. 
\end{enumerate}
 The ISS comparison functions and constants in~\eqref{eq:iiss_inclusion}--\eqref{eq:exp_iss_inclusion} arising in the above conclusions depend only on the linear data~$(A,B,C)$, the bounding terms~$\alpha, \theta, \mu, c$ and, where relevant, the fixed positive constant~$R$.
\end{theorem}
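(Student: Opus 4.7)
The plan is to invoke Lemma~\ref{lem:inclusion_ISS_lyapunov} by constructing an ISS-Lyapunov function for~\eqref{eq:lure_inclusion} from two building blocks: a passivity storage function $V_P(z) := \langle z, Pz\rangle$ supplied by the KYP-type LMI~\eqref{eq:P_lmi_condition}, and an observer-type function $V_Q(z) := \langle z, Qz\rangle$, where $Q > 0$ is symmetric and satisfies $(A + LC)^\top Q + Q(A + LC) = -I$ for some $L$ with $A + LC$ Hurwitz (supplied by detectability). The role of $V_Q$ is to recover coercivity in $\|z\|$, since $P$ is only positive semi-definite and $V_P$ on its own cannot serve as the Lyapunov function in Lemma~\ref{lem:inclusion_ISS_lyapunov}.

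Expanding~\eqref{eq:P_lmi_condition} as a quadratic inequality in $(z, u)$ and setting $u = v - w$ with $w \in F(Cz)$, combined with the inner-product sector condition~\eqref{eq:F_sector_1}, yields the passivity dissipation estimate
\[
\langle (\nabla V_P)(z), Az - B(w - v)\rangle \leq -2\|Cz\|\alpha(\|Cz\|) + 2\|Cz\|\,\|v\|\,.
\]
Writing $A^\top Q + QA = -I - C^\top L^\top Q - QLC$ and applying Young's inequality to the indefinite cross- and input-terms produces
\[
\langle (\nabla V_Q)(z), Az - B(w - v)\rangle \leq -\tfrac{\sigma}{2}\|z\|^2 + c_1\|Cz\|^2 + c_2\|w\|^2 + c_3\|v\|^2
\]
for some $\sigma > 0$ and constants $c_1, c_2, c_3$ depending only on $Q, L, B$.

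The main technical obstacle is controlling the $c_2\|w\|^2$ term from $\dot V_Q$, since under the hypotheses of~\ref{ls:inclusion_ISS} the bound $\theta$ (and thus $\|w\|$) may be \emph{superlinear} in $\|Cz\|$, so $\|w\|^2$ is not controlled by any fixed multiple of $\|Cz\|\alpha(\|Cz\|)$. Here the infinite-sector hypothesis~\eqref{eq:F_sector_2} is decisive: a case analysis on $\|Cz\| \leq \mu$ versus $\|Cz\| > \mu$ gives (i) $\|w\| \leq \theta(\mu)$ bounded on the former, and (ii) the key absorption inequality $\|w\|^2 \leq c\theta(\|Cz\|)\langle w, Cz\rangle$ on the latter (from $\|w\|\leq c\langle w, Cz\rangle$ combined with $\|w\|\leq\theta(\|Cz\|)$). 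These estimates, coupled with a non-linear weighting of $V_P$ against $V_Q$ (e.g.\ working with $g(V_P) + \lambda V_Q$ for a suitably-grown $g \in \sK_\infty$, or equivalently rescaling $V_P$ by a function of $\|Cz\|$), allow the $\|w\|^2$ contribution to be absorbed by the negative $\langle w, Cz\rangle$ term from $\dot V_P$, while the residual $c_1\|Cz\|^2$ is dominated outside a compact set by $\|Cz\|\alpha(\|Cz\|)$ since $\alpha \in \sK_\infty$. This produces~\eqref{eq:iss_lie_dervative} with $\alpha_3, \alpha_4 \in \sK_\infty$, and Lemma~\ref{lem:inclusion_ISS_lyapunov}\ref{ls:ISS_lyapunov} yields~\ref{ls:inclusion_ISS}. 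Should the direct construction prove too rigid in the set-valued setting, a fall-back route first establishes $0$-GAS of the unforced inclusion (using the passivity inequality, an invariance argument for differential inclusions, and detectability) and then invokes the smooth converse Lyapunov theorem for differential inclusions from~\cite{teel2000smooth} to produce a smooth Lyapunov function, which is then spliced with the passivity estimate to obtain the ISS-Lyapunov function.

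Parts~\ref{ls:inclusion_sg_exp_ISS} and~\ref{ls:inclusion_exp_ISS} are \emph{quadratic} upgrades of the same construction, made possible by the lower bound $\|Cz\|\alpha(\|Cz\|) \geq \eps_0\|Cz\|^2$ coming from $\alpha(s)\geq \eps_0 s$. For~\ref{ls:inclusion_sg_exp_ISS}, apply~\ref{ls:inclusion_ISS} first to obtain a uniform bound $\|x(t)\|\leq R'(R)$ on trajectories with $\|x(0)\| + \|v\|_{L^\infty} \leq R$; on the resulting bounded region $\|Cz\|\leq \|C\|R'$, the value $\theta(\|C\|R')$ is a uniform constant, so $\|w\|\leq \theta(\|C\|R')$ is uniformly bounded and the $V_P + \lambda V_Q$ estimate closes with every $\alpha_i$ quadratic, whence part~(3) of Lemma~\ref{lem:inclusion_ISS_lyapunov} gives semi-global exponential ISS. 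For~\ref{ls:inclusion_exp_ISS}, linear boundedness of $\theta$ gives $\|w\|\leq a\|Cz\|$ globally for some $a > 0$, so a single small $\lambda > 0$ suffices to produce a global quadratic ISS-Lyapunov function and thus global exponential ISS. The asserted uniformity of the comparison functions in the data $(A,B,C,\alpha,\theta,\mu,c,R)$ is transparent from the explicit dependence of $P$, $Q$, $\lambda$, and the $c_i$ on these quantities.
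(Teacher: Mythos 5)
Your building blocks are the right ones: the passivity estimate from $V_P$ via~\eqref{eq:P_lmi_condition}, the detectability-based $V_Q$, the case split on $\|Cz\|\lessgtr\mu$, and the correct identification that superlinear $\|w\|$ is the main obstacle. Parts~\ref{ls:inclusion_sg_exp_ISS} and~\ref{ls:inclusion_exp_ISS} are also described essentially as the paper does them (prove~\ref{ls:inclusion_exp_ISS} first, then bootstrap~\ref{ls:inclusion_sg_exp_ISS} from~\ref{ls:inclusion_ISS} plus a restricted, linearly bounded $\tilde F$).

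The genuine gap is in the nonlinear weighting for~\ref{ls:inclusion_ISS}. You propose $g(V_P)+\lambda V_Q$, i.e.\ a nonlinear rescaling of $V_P$. This cannot close. If $g'$ is made to grow (to boost the negative $\langle w,Cz\rangle$ contribution), the input term from $\dot V_P$ becomes $g'(V_P(z))\cdot 2\alpha^{-1}(2\|u\|)\|u\|$, which depends on $z$ through $g'(V_P(z))$ and is therefore not bounded by any $\alpha_4(\|u\|)$; the ISS-Lyapunov structure~\eqref{eq:iss_lie_dervative} is destroyed. If instead $g'$ is kept bounded, then $g(V_P)$ gives no more absorption than a constant multiple $L V_P$: using~\eqref{eq:F_sector_2}, $L\langle w,Cz\rangle \gtrsim L\|w\|/c$, while the term from $\dot V_Q$ to be absorbed is $\lambda\|z\|\|w\|$, and $\lambda\|z\|\|w\| \leq L\|w\|/c$ fails for $\|z\|$ large. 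The paper's construction rescales the other factor: $V := V_P + h(V_Q)$ with $h'=k$ \emph{decaying}, chosen so that $z\mapsto k(V_Q(z))\|z\|$ is uniformly bounded by $\eps$ (and additionally $k\le c_0c_1\eta(c_2\sqrt{\cdot})$ with $\eta$ from Lemma~\ref{lem:function_estimates} for the $\|Cz\|\le\mu$ regime). This damps the $k(V_Q(z))\|z\|(\|Cz\|+\|w\|)$ cross term from $\dot W$ to something absorbable by the \emph{unrescaled} $-\langle w,Cz\rangle$ from $\dot V_P$ and by $-\delta k(V_Q(z))\|z\|^2$, while the input term stays in the required $\sK$-of-$\|u\|$ form because $V_P$ is not rescaled. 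Reversing the direction of the rescaling is the missing idea.

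Your fall-back (establish $\sK\sL$-stability and invoke the converse Lyapunov theorem of~\cite{teel2000smooth}) is what the paper uses for a \emph{different} result (Theorem~\ref{thm:KL_stability}), not Theorem~\ref{thm:iss_inclusion}, and the ``splicing'' step you gesture at is nontrivial: the Lyapunov function produced by~\cite{teel2000smooth} characterizes stability of the \emph{unforced} inclusion and does not come with the dissipation inequality against the input that~\eqref{eq:iss_lie_dervative} requires, so additional work is needed to combine it with the passivity estimate.
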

%
%
Statement~\ref{ls:inclusion_ISS} is a generalisation of~\cite[Theorem 1]{arcak2002input} to a differential inclusion setting, and from which the present result draws inspiration. The conclusion in statement~\ref{ls:inclusion_sg_exp_ISS} is the so-called semi-global exponential ISS property. Whilst the previous differential inclusion result is of independent interest, our primary motivation for Theorem~\ref{thm:iss_inclusion} is to facilitate incremental ISS results for families of forced systems Lur'e differential equations. The uniformity of comparison functions/constants ensured by the theorem is consequently essential. 

%
%
We comment on the condition~\eqref{eq:P_lmi_condition}. Let 
\[ \bG(s) := C(sI-A)^{-1}B\,, \] 
a proper rational function of the complex variable $s$, denote the {\em transfer function} associated with the Lur'e inclusion~\eqref{eq:lure}. If~$(A,B,C)$ is controllable and observable, and~$\bG$ is positive real (see, for instance~\cite{guiver2017transfer}), then a positive definite solution~$P = P^\top$ to the LMI~\eqref{eq:P_lmi_condition} follows from the standard Positive Real Lemma, and can be found in, for example~\cite[Corollary 5.6]{MR2381711} or~\cite[Theorem 13.26]{zhou1996robust}. The conditions that~$A$ is Hurwitz and~$\bG$ are positive real are sufficient for the existence of a symmetric positive semi-definite solution~$P = P^\top$ to~\eqref{eq:P_lmi_condition}. Whilst we expect that this result is available in the literature, we could not find a suitable statement in the desired form. It may be derived by applying the Bounded Real Lemma appearing as~\cite[Theorem 3.7.1, note, p.\ 111]{Green:1994:LRC:191301} to the Cayley transform $(I-\bG)(I+\bG)^{-1}$ of $\bG$.

%
%
The proof of Theorem~\ref{thm:iss_inclusion} is supported by three technical lemmas. The first is a routine variation of~\cite[Proposition 3.1]{sarkans2015input}, and the second and third are based on~\cite[Lemma 2.1.6, p.11]{gilmore_thesis} and~\cite[Lemma 4.1.11, p.\ 101]{gilmore_thesis}, respectively, although neither of these results apply in the set-valued case. As noted in~\cite[Remark 4.1.12, p.\ 101]{gilmore_thesis}, a version of the third lemma below appears inside the proof of~\cite[Theorem 1]{arcak2002input}.
%
%
\begin{lemma}
Consider~\eqref{eq:lure_inclusion} and assume that the pair~$(C,A)$ is detectable. Then there exists a symmetric positive definite~$Q \in \mR^{n\times n}$ and~$\delta >0$ such that, for all~$(u,z,w) \in \mR^{m} \times \mR^n \times \mR^m$,
\begin{equation}\label{eq:detectable_lyapunov}
2 \langle Q z, A z - B(w - u) \rangle \leq -\delta \| z \|^2 +  \| z \|\big(\| C z \| + \| w \| + \| u \|\big)\,. 
\end{equation} 
\end{lemma}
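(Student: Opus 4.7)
The plan is to use detectability of $(C,A)$ to obtain a Lyapunov-type positive-definite matrix, then absorb the cross terms into the advertised bound by scaling.

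First, since $(C,A)$ is detectable, there exists $L \in \mR^{n\times m}$ such that $A-LC$ is Hurwitz. By the standard Lyapunov theorem applied to this Hurwitz matrix, there exist a symmetric positive-definite $\tilde Q\in\mR^{n\times n}$ and $\eta > 0$ such that
\[ (A-LC)^\top \tilde Q + \tilde Q(A-LC) = -2\eta I\,, \]
which, after expansion, yields
\[ A^\top \tilde Q + \tilde Q A = -2\eta I + C^\top L^\top \tilde Q + \tilde Q L C\,. \]

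Second, using this identity and estimating the remaining inner products by Cauchy--Schwarz, I would compute, for arbitrary $(u,z,w) \in \mR^m \times \mR^n \times \mR^m$,
\begin{align*}
2\langle \tilde Q z, Az - B(w-u)\rangle &= z^\top(A^\top \tilde Q + \tilde Q A)z - 2 z^\top \tilde Q B(w-u) \\
&\leq -2\eta \|z\|^2 + 2\|\tilde Q L\|\,\|z\|\,\|Cz\| + 2\|\tilde Q B\|\,\|z\|\big(\|w\|+\|u\|\big)\,.
\end{align*}

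Third, to reduce all the constants in front of $\|z\|\|Cz\|$, $\|z\|\|w\|$, $\|z\|\|u\|$ to one (as required by the statement), I would simply rescale: set $Q := \kappa \tilde Q$ where
\[ \kappa := \min\!\Big(\tfrac{1}{2\|\tilde Q L\|},\,\tfrac{1}{2\|\tilde Q B\|}\Big) \in (0,\infty)\,, \]
with the convention that the corresponding term is omitted if $\|\tilde Q L\|=0$ or $\|\tilde Q B\|=0$. Multiplying the preceding display by $\kappa$ gives~\eqref{eq:detectable_lyapunov} with $\delta := 2\kappa \eta > 0$, and $Q$ is symmetric positive-definite as a positive multiple of $\tilde Q$.

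There is no real obstacle here; the argument is a routine detectability-plus-Lyapunov manipulation followed by a scaling, and it is essentially the same calculation used to convert a detectability observer into a ``cross-term Lyapunov inequality'' as in the cited~\cite[Proposition 3.1]{sarkans2015input}. The only point requiring mild care is the degenerate case $\|\tilde Q L\| = 0$ or $\|\tilde Q B\| = 0$, which is handled as noted above.
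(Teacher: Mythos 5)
Your proof is correct, and it is the expected argument. The paper does not actually spell out a proof of this lemma --- it only remarks that it is a routine variation of \cite[Proposition 3.1]{sarkans2015input} --- and your construction (take $L$ with $A-LC$ Hurwitz, solve the Lyapunov equation for $\tilde Q$, expand $A^\top\tilde Q + \tilde Q A$ to surface the $\tilde Q L C$ cross-term, bound everything by Cauchy--Schwarz, then rescale $\tilde Q$ so that all cross-term coefficients drop to $1$) is precisely the intended routine. One small point worth flagging: your rescaling works because the quantity $2\langle Qz, Az - B(w-u)\rangle$ is linear in $Q$, so multiplying by $\kappa \leq 1$ shrinks all terms proportionally, and you correctly keep $\delta = 2\kappa\eta > 0$; the degenerate cases $\|\tilde Q L\| = 0$ or $\|\tilde Q B\| = 0$ are indeed handled by simply dropping the corresponding constraint from the minimum (in those cases the offending term vanishes anyway). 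No gaps.
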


%
%
\begin{lemma}\label{lem:comp_function_for_}
Given~$\beta_i \in \sK_\infty$,~$i = 1,2,3$, and~$\mu \geq 0$, there exists~$\eta \in \sK_\infty$ such that
\begin{equation}\label{eq:comp_function_estimate_1}
\eta(\beta_1(s))\beta_2(s) \leq \beta_3(s) \quad \forall \: s \in [0, \mu]\,.
\end{equation} 
\end{lemma}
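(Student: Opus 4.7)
The approach is to use the invertibility of $\beta_1$ to reduce the statement to a one-variable problem about majorising a positive continuous function on a compact interval by a $\sK_\infty$ function, and then to construct such a majorant by a ``min-and-taper'' trick.

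If $\mu = 0$ the inequality holds at the single point $s = 0$ for any $\eta \in \sK_\infty$, so I assume $\mu > 0$ and set $\mu_0 := \beta_1(\mu) > 0$. Since $\beta_1 \in \sK_\infty$ is a continuous bijection on $\mR_+$, the substitution $t = \beta_1(s)$, $s = \beta_1^{-1}(t)$, reduces the claim to producing $\eta \in \sK_\infty$ with
\[ \eta(t) \leq g(t) \quad \forall \: t \in (0, \mu_0], \qquad \text{where } g(t) := \frac{\beta_3(\beta_1^{-1}(t))}{\beta_2(\beta_1^{-1}(t))}. \]
The function $g$ is continuous and strictly positive on $(0, \mu_0]$, although it need not extend continuously to $0$.

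Next, I would define $q : (0, \mu_0] \to (0, \infty)$ by $q(t) := \min_{\tau \in [t, \mu_0]} g(\tau)$, which exists by continuity of $g$ on the compact interval $[t, \mu_0]$. A brief argument using continuity of $g$ shows that $q$ is continuous; it is positive, non-decreasing by construction, and satisfies $q \leq g$ on $(0, \mu_0]$. Now set
\[ \eta(t) := \frac{t\, q(t)}{\mu_0} \quad \text{for } t \in (0, \mu_0], \qquad \eta(0) := 0. \]
This $\eta$ is continuous, zero at zero, and strictly increasing on $[0, \mu_0]$: indeed, for $0 \leq t_1 < t_2 \leq \mu_0$,
\[ \eta(t_1) \;=\; \frac{t_1 q(t_1)}{\mu_0} \;<\; \frac{t_2 q(t_1)}{\mu_0} \;\leq\; \frac{t_2 q(t_2)}{\mu_0} \;=\; \eta(t_2), \]
using positivity of $q$ and monotonicity of $q$. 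Moreover $\eta(t) \leq q(t) \leq g(t)$ on $(0, \mu_0]$, since $t/\mu_0 \leq 1$ there. Finally, I would extend $\eta$ to $[\mu_0, \infty)$ by $\eta(t) := \eta(\mu_0) + (t - \mu_0)$, producing $\eta \in \sK_\infty$; reversing the substitution yields the desired inequality on $[0, \mu]$.

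The step I expect to be the sole real obstacle is strict monotonicity: the natural candidate $q(t)$ is only non-decreasing and might be flat on subintervals, so it fails to be in $\sK_\infty$. The multiplication by the strictly increasing factor $t/\mu_0$ repairs this defect ``for free'' while preserving the bound $\eta \leq g$, because the taper factor is bounded by $1$. Everything else, including continuity of $q$ and the potential discontinuity of $g$ at $0$, is a routine technicality that does not obstruct the construction.
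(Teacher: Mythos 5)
Your argument is correct, but it takes a noticeably longer route than the paper's. Having reduced matters to minorising
\[
g(t)\;=\;\frac{\beta_3(\beta_1^{-1}(t))}{\beta_2(\beta_1^{-1}(t))}
\]
on $(0,\mu_0]$ by a $\sK_\infty$ function, you need the min-and-taper machinery precisely because $g$ need not be monotone and may behave badly near $0$. The paper avoids all of this by replacing the variable denominator $\beta_2(s)$ with the constant $\beta_2(\mu)$, which dominates it on $[0,\mu]$: defining
\[
\eta(t) \;:=\; \frac{\beta_3(\beta_1^{-1}(t))}{\beta_2(\mu)}\,,
\]
one has $\eta \in \sK_\infty$ automatically (it is a composition of $\sK_\infty$ functions scaled by a positive constant), and
\[
\eta(\beta_1(s))\,\beta_2(s) \;=\; \beta_3(s)\,\frac{\beta_2(s)}{\beta_2(\mu)} \;\leq\; \beta_3(s)\qquad\forall\: s\in[0,\mu]\,,
\]
by monotonicity of $\beta_2$. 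In effect, the paper loosens the pointwise bound just enough that no minorant of $g$ has to be constructed at all. Your construction is sound and slightly more economical in the sense that it tracks the actual quotient $g$ rather than a uniform overestimate of the denominator, but since the lemma only asks for \emph{some} $\eta\in\sK_\infty$, the sharper control buys nothing here and costs the continuity argument for $q$, the strict-monotonicity repair, the treatment of the possible singularity of $g$ at $0$, and the explicit extension past $\mu_0$. One small presentational point: you assert continuity of $q$ as "a routine technicality"; it is indeed standard (the minimum of a continuous function over a compact interval whose endpoint varies continuously), but since that step is the only place where anything could go wrong in your construction, it deserves at least one sentence rather than a wave.
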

%
%
\begin{proof}
If~$\mu = 0$, then any~$\gamma \in \sK_\infty$ satisfies~\eqref{eq:comp_function_estimate_1}. So assume that~$\mu >0$. Defining~$\eta \in \sK_\infty$ by
\[ \eta(s) : = \frac{\beta_3(\beta_1^{-1}(s))}{\beta_2(\mu)} \quad \forall \: s \in \mR_+\,,\]
yields the required inequality, namely,
\[ \eta(\beta_1(s))\beta_2(s) = \beta_3(s) \frac{\beta_2(s)}{\beta_2(\mu)} \leq \beta_3(s) \quad \forall \: s \in [0, \mu]\,. \qedhere \]
\end{proof}

%
%
\begin{lemma}\label{lem:function_estimates}
Let~$F$ be given by~\eqref{eq:F_passive_inclusion} and assume that~$\alpha \in \sK_\infty$. The following bound holds:
\begin{equation}\label{eq:g_property_1}
2\langle u,y  \rangle \leq \langle y , w \rangle + 2 \alpha^{-1}(2 \| u \|) \| u\| \quad \forall \: w \in F(y), \; \forall \: (u,y) \in  \mR^n \times \mR^m\,.
\end{equation} 
Moreover, there exist~$\eps >0$ and~$\eta \in \sK_\infty$ such that
\begin{equation}\label{eq:g_property_2}
\eta(\| y \|) \| y \|^2 +  \eta(\|w \|) \|w\|^2 \leq \langle y , w \rangle \quad \forall \: w \in F(y), \; \forall \: y \in \mR^m, \; \| y\| \leq \mu\,,
\end{equation} 
and
\begin{equation}\label{eq:g_property_3}
\eps (\| w \| + \| y \|) \leq \langle y , w \rangle \quad \forall \: w \in F(y), \; \forall \: y \in \mR^m, \; \| y\| > \mu\,.
\end{equation} 
\end{lemma}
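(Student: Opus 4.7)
The plan is to treat the three inequalities separately, each exploiting one of the sector-type properties comprising~\eqref{eq:F_sector}. Only the middle inequality~\eqref{eq:g_property_2} is genuinely delicate; the remaining two reduce to Cauchy--Schwarz together with direct manipulations. Throughout, I would use that $\alpha \in \sK_\infty$ ensures~\eqref{eq:F_sector_1} supplies both the non-negativity $\langle w,y\rangle \geq 0$ and a monotonically increasing lower bound on $\langle w, y\rangle$ in terms of $\|y\|$.

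For~\eqref{eq:g_property_1}, I would argue by a dichotomy. Cauchy--Schwarz gives $2\langle u,y\rangle \leq 2\|u\|\|y\|$. If $2\|u\| \leq \alpha(\|y\|)$, then~\eqref{eq:F_sector_1} yields $2\|u\|\|y\| \leq \|y\|\alpha(\|y\|) \leq \langle y,w\rangle$, so~\eqref{eq:g_property_1} holds even without the second summand on the right. Otherwise, $\|y\| < \alpha^{-1}(2\|u\|)$ by monotonicity of $\alpha^{-1}$, giving $2\|u\|\|y\| \leq 2\alpha^{-1}(2\|u\|)\|u\|$; since $\langle y, w\rangle \geq 0$, this again delivers~\eqref{eq:g_property_1}.

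For~\eqref{eq:g_property_2}, the strategy is to dominate each of the two summands on the left-hand side by $\tfrac{1}{2}\langle y,w\rangle$ using only data available on $\|y\|\in[0,\mu]$. I would apply Lemma~\ref{lem:comp_function_for_} first with $\beta_1(s)=s$, $\beta_2(s)=s^2$ and $\beta_3(s)=\tfrac{1}{2}s\alpha(s)$ to extract $\eta_1\in\sK_\infty$ satisfying $\eta_1(\|y\|)\|y\|^2 \leq \tfrac{1}{2}\|y\|\alpha(\|y\|)$ on $[0,\mu]$. I would then apply the same lemma with $\beta_1 = \theta$, $\beta_2 = \theta^2$ and the same $\beta_3$ to obtain $\eta_2\in\sK_\infty$ with $\eta_2(\theta(s))\theta(s)^2 \leq \tfrac{1}{2}s\alpha(s)$ on $[0,\mu]$. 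The bound $\|w\|\leq\theta(\|y\|)$ from~\eqref{eq:F_sector_0}, combined with the monotonicity of $r\mapsto\eta_2(r)r^2$, then gives $\eta_2(\|w\|)\|w\|^2 \leq \tfrac{1}{2}\|y\|\alpha(\|y\|)$. Summing and invoking~\eqref{eq:F_sector_1} yields the claim for any $\eta\in\sK_\infty$ bounded above by $\min(\eta_1,\eta_2)$.

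For~\eqref{eq:g_property_3}, in the regime $\|y\|>\mu$ monotonicity of $\alpha$ gives $\alpha(\|y\|)\geq \alpha(\mu)>0$, so~\eqref{eq:F_sector_1} yields $\langle y,w\rangle \geq \alpha(\mu)\|y\|$. Combining this with $\|w\|\leq c\langle y,w\rangle$ from~\eqref{eq:F_sector_2}, one obtains $\|y\|+\|w\| \leq (1/\alpha(\mu)+c)\langle y,w\rangle$, and any $\eps\leq \alpha(\mu)/(1+c\alpha(\mu))$ suffices. The main obstacle lies in~\eqref{eq:g_property_2}: producing a single comparison function $\eta\in\sK_\infty$ requires passing from the merely non-decreasing pointwise minimum $\min(\eta_1,\eta_2)$ to a strictly increasing $\sK_\infty$ function dominated by it. This is a routine class-$\sK_\infty$ technicality (take, e.g., $\eta(s):=\min(\eta_1(s),\eta_2(s),s/(1+s))$ and smooth if needed), but it must be addressed so that the stated conclusion carries a genuine $\sK_\infty$ bound.
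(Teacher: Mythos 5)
Your proof is correct and, for \eqref{eq:g_property_1} and \eqref{eq:g_property_3}, follows essentially the same route as the paper: a dichotomy on the size of $\|u\|$ relative to $\alpha(\|y\|)$ for the first, and a direct combination of \eqref{eq:F_sector_1} and \eqref{eq:F_sector_2} for the third (the paper takes $\eps := \min\{1/(2c),\,\alpha(\mu)/2\}$, yours picks $\eps \leq \alpha(\mu)/(1+c\alpha(\mu))$; both work).

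For \eqref{eq:g_property_2} your argument differs slightly in bookkeeping. You invoke Lemma~\ref{lem:comp_function_for_} twice, once per summand, and then take a minimum to produce a single $\eta$. The paper instead makes a single application with the composite choices $\beta_1(s) = s+\theta(s)$, $\beta_2(s) = 2(s^2+\theta^2(s))$, $\beta_3(s) = s\alpha(s)$, and then compares each of $\|y\|$ and $\|w\| \leq \theta(\|y\|)$ against $\beta_1(\|y\|)$ using monotonicity. The two approaches are equivalent in substance and effort; the paper's packages both terms into one invocation of the lemma, while yours is perhaps more transparent at the cost of one extra invocation and a merging step.

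One small correction to your closing remark: the merging step is not actually a technicality that needs repair. The pointwise minimum of two $\sK_\infty$ functions is itself in $\sK_\infty$ (continuity and unboundedness are clear, and strict monotonicity follows since $\min(\eta_1,\eta_2)(s) \leq \eta_i(s) < \eta_i(t) = \min(\eta_1,\eta_2)(t)$ for $s<t$, where $i$ achieves the min at $t$). Moreover, the fix you propose as an example, $\eta(s):=\min\bigl(\eta_1(s),\eta_2(s),s/(1+s)\bigr)$, does not work: $s/(1+s)$ is bounded by $1$, so the resulting minimum is bounded and therefore \emph{not} in $\sK_\infty$. Simply use $\eta := \min(\eta_1,\eta_2)$ directly.
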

%
%
\begin{proof}
Note that, by construction, every pair~$(w,y)$ with~$y \in \mR^m$ and~$w \in F(y)$ satisfies~\eqref{eq:F_sector}. 
Let~$(u,y) \in  \mR^n \times \mR^m$ and~$w \in F(y)$. If~$\| u \| < \alpha(\| y \|)/2$, then the Cauchy-Schwarz inequality and~\eqref{eq:F_sector_1} give
\[ 2\langle y, u \rangle \leq 2 \| y \| \| u \| \leq \| y \| \alpha( \|y\|) \leq \langle y, w \rangle\,. \]
 If~$\| u \| \geq \alpha(\|y\|)/2$, then evidently
\[ 2\langle y, u \rangle \leq 2 \| y \| \| u \| \leq  2\| u \| \alpha^{-1}( 2\| u\|)\,. \]
Since both the terms on the right hand side of~\eqref{eq:g_property_1} are nonnegative, it follows that
\[ 2\langle y, u \rangle \leq \max\big\{\langle y, w \rangle, 2\| u \| \alpha^{-1}( 2\|u\|) \big\} \leq \langle y, w \rangle + 2 \alpha^{-1}(2 \| u \|) \| u\|\,,\]
as required. 
To prove~\eqref{eq:g_property_2}, recall first that~$\theta \in \sK_\infty$ is such that~$\| w\| \leq \theta(\| y \|)$ for all~$y \in \mR^m$. We apply Lemma~\ref{lem:comp_function_for_} with
\[ \beta_1(s) : = s + \theta(s), \quad \beta_2(s) := 2(s^2 + \theta^2(s)), \quad \text{and} \quad \beta_3(s) := s \alpha(s)\,,\]
to yield~$\eta \in \sK_\infty$ such that~\eqref{eq:comp_function_estimate_1} holds. We estimate that
\begin{align*}
\eta(\|y\|) \|y\|^2 + \eta(\|w\|) \| w\|^2 & \leq \eta(\|y\|) \|y\|^2 + \eta(\theta(\|y\|) \| \theta^2(\|y\|) \\
& \leq 2 \eta\big(\|y\| + \theta(\|y\|) \big)\big( \|y\|^2 + \theta^2(\|y\|)\big) \\
& = \eta(\beta_1( \| y \|))\beta_2( \| y \|) \leq \beta_3(\|y\|) \\
& = \|y \| \alpha(\| y \|) \quad \forall \: y \in \mR^m \quad \text{such that} \quad \| y \| \leq \mu\,.
\end{align*}
The inequality~\eqref{eq:g_property_2} now follows from the above and~\eqref{eq:F_sector_1}.

Finally, to prove~\eqref{eq:g_property_3}, we set
\[ \eps : = \min\Big\{\frac{1}{2c}, \frac{\alpha(\mu)}{2}\Big\} >0\,,\]
and use~\eqref{eq:F_sector_1} and~\eqref{eq:F_sector_2} to obtain the desired inequality
\begin{align*}
 \eps (\| w \| + \| y \|) & \leq \frac{1}{2c}\| w \|  + \frac{\alpha(\mu)}{2}\| y \| \leq \frac{1}{2c}\| w \|  + \frac{\alpha(\| y \|)}{2}\| y \| \leq \frac{1}{2}\langle y, w \rangle + \frac{1}{2 }\langle y, w \rangle\\
& = \langle y, w \rangle \quad \forall \: y \in \mR^m \quad \text{such that} \quad \| y \| > \mu\,.  \qedhere
\end{align*}
\end{proof}

%
%
\begin{proof}[Proof of Theorem~\ref{thm:iss_inclusion}]
We introduce some notation and estimates common to each of the statements. Let~$\eps >0$ and~$\eta \in \sK_\infty$ be as in Lemma~\ref{lem:function_estimates}. Define~$V_P : \mR^n \to \mR_+$ and~$V_Q : \mR^n \to \mR_+$ by the quadratic forms
\[ V_P(z) : = \langle z , P z \rangle \quad \text{and} \quad V_Q(z) : = \langle z , Q z \rangle \quad \forall \: z \in \mR^n\,, \]
where~$P$ and~$Q$ are as in~\eqref{eq:P_lmi_condition} and~\eqref{eq:detectable_lyapunov}, respectively. Since~$Q$ is positive definite, there exist~$q_1, q_2 >0$ such that
\begin{equation}\label{eq:Q_constants}
q_1 \| z \| \leq V_Q(z) \leq q_2 \| z \| \quad \forall \: z \in \mR^n\,.
\end{equation}
In the following, let~$(u,z) \in \mR^m \times \mR^n$ and~$w \in F(Cz)$. We compute that
\begin{align}
\langle (\nabla V_P)(z), Az - B(w - u)\rangle & = 2 \langle Pz, Az - B(w - u)\rangle \notag \\
& = \left \langle \bpm{A^\top P + PA & PB - C^\top \\ B^\top P - C & 0} \bpm{z \\ u-w}, \bpm{z \\ u-w} \right \rangle 
+ 2\langle u-w, C z \rangle \notag \\
& \leq -2\langle w, C z \rangle + 2 \langle u, C z \rangle\,, \label{eq:inclusion_iss_p1}
\end{align}
where we have used~\eqref{eq:P_lmi_condition}.

\ref{ls:inclusion_ISS} An application of the estimate~\eqref{eq:g_property_1} from Lemma~\ref{lem:function_estimates} to~\eqref{eq:inclusion_iss_p1} gives that
\begin{equation}\label{eq:iss_p1}
\langle (\nabla V_P)(z), Az - B(w - u)\rangle \leq -\langle w, Cz \rangle + 2 \alpha^{-1}(2 \| u \|) \| u\|\,.
\end{equation}
Second, we invoke~\eqref{eq:detectable_lyapunov} to estimate that
\begin{align}
\langle (\nabla V_Q)(z), Az - B(w - u)\rangle & = 2 \langle Qz, Az - B(w - u)\rangle \notag \\
& \leq -\delta \| z \|^2 +  \| z \|( \|Cz \| + \| w\|) + \| u \|^2\,. \label{eq:iss_p2} 
\end{align}
Next, we define~$W := h \circ V_Q$ where~$h :\mR_+ \to \mR_+$ is given by
\[ h(s) : = \int_0^p k(\tau)\, \rd  \tau \quad \forall \: s \geq0 \quad \text{and} \quad k(\tau) : = c_0 \min \Big\{ \frac{1}{\sqrt{\tau + 1}}, c_1\eta(c_2\sqrt{\tau})\Big\} \quad \forall \: \tau \geq 0\,, \]
where
\[ c_0 : = \min\big\{ 1, \eps/q_1\big\}, \quad c_1 := \frac{\delta}{4} \quad \text{and} \quad c_2 := \frac{\delta}{4  q_2}\,,\]
 and where~$q_1, q_2 >0$ are as in~\eqref{eq:Q_constants}, and~$\eta \in \sK_\infty$ is as in Lemma~\ref{lem:function_estimates}. 
Note that~$k$ is bounded, but~$h$ is unbounded, and hence~$W$ is radially unbounded. Note further that since~$k(\tau) = c_0/\sqrt{\tau+1}$ for sufficiently large~$\tau >0$, it follows that
\[ \frac{\delta  k(V_Q(z))\| z \|^2}{2} = \frac{\delta c_0\| z \|^2}{2\sqrt{\lvert z \rvert_Q^2 + 1}} \geq \frac{\delta c_0\|z\|}{4 q_2}\,,\]
for all~$z \in \mR^n$ with~$\| z \|$ sufficiently large. Therefore, it follows that there exists~$\gamma \in \sK_\infty$ such that
\begin{equation}\label{eq:iss_p3}
\frac{\delta k(V_Q(z))\| z \|^2}{2} \geq \gamma(\|z\|) \quad \forall \: z \in \mR^n\,.
\end{equation}
We proceed to compute that
\begin{align}
\langle (\nabla W)(z), Az - B(w - u)\rangle & \leq h'(V_Q(z)) \big( -\delta \| z \|^2 + \| z \|(\| C z \| + \|w\|) + \| u \|^2\big) \notag \\
& = k(V_Q(z))\big( -\delta \| z \|^2 + \| z \|(\| C z \| + \|w\|) + \| u \|^2\big) \notag \\
& \leq k(V_Q(z))\big(  -\delta \| z \|^2 + \| z \|(\| C z \| + \|w\|)\big) +  \| k \|_{L^\infty(\mR_+)}  \| u \|^2\,. \label{eq:iss_p4}
\end{align}
We claim that
\begin{equation}\label{eq:iss_cross_estimate}
k(V_Q(z)) \| z \|(\| C z \| + \|w\|) \leq \frac{\delta k(V_Q(z))}{2} \| z \|^2 + \langle w, C z\rangle\,. 
\end{equation}
Assuming that~\eqref{eq:iss_cross_estimate} holds, setting~$V: = V_P + W$ and summing~\eqref{eq:iss_p1} and~\eqref{eq:iss_p4} yields that
\begin{align*}
\langle (\nabla V)(z), Az - B(w - u)\rangle &\leq -\frac{\delta  k(V_Q(z))}{2} \| z \|^2 + \| k \|_{L^\infty(\mR_+)} \| u \|^2 + 2 \alpha^{-1}(2 \| u \|) \| u\| \\
& \leq -\gamma(\|z\|) + \| k \|_{L^\infty(\mR_+)} \| u \|^2 + 2 \alpha^{-1}(2 \| u \|) \| u\|\,, 
\end{align*}
by~\eqref{eq:iss_p3}. The function~$V$ is an ISS Lyapunov function for the Lur'e inclusion~\eqref{eq:lure_inclusion}, and so we conclude that~\eqref{eq:lure_inclusion} is ISS from Lemma~\ref{lem:inclusion_ISS_lyapunov}. 

It remains to establish~\eqref{eq:iss_cross_estimate}. We consider two exhaustive cases. 

{\sc Case 1:~$\| C z \| \leq \mu$}. If~$\|w \| \leq (\delta/4) \| z \|$, then trivially 
\[ k(V_Q(z)) \| z \|\|w\| \leq \frac{\delta}{4}k(V_Q(z))\|z \|^2\,.\]
Conversely,  if~$\|w \| \geq (\delta/4) \| z \|$, then
\begin{align*}
  k(V_Q(z)) \| z \|\|w\| & \leq \frac{4}{\delta}k(V_Q(z)) \|w \|^2 \leq \frac{4}{\delta} c_0 c_1 \eta\Big( c_2 \sqrt{V_Q(z)}\Big)  \| w \|^2 \leq \frac{4}{\delta} c_0 c_1 \eta( c_2 q_2 \| z \|) \| w\|^2 \\
	& \leq \frac{4}{\delta} c_0 c_1 \eta\big( (4 c_2 q_2/\delta)  \|w \| \big) \| w\|^2\leq  \eta(\| w\|)\| w\|^2\,,
\end{align*}
where we have used that~$k(s) \leq c_0 c_1 \eta\big(c_2 \sqrt{s}\big)$, that~$\eta$ is non-decreasing, and the definitions of the constants~$c_0$,~$c_1$ and~$c_2$.
Combining the above two exhaustive cases yields that
\begin{equation}\label{eq:p6}
 k(V_Q(z)) \| z \|\|w\| \leq \max\Big\{ \frac{\delta}{4}k(V_Q(z))\| z \|^2 ,  \eta(\| w\|)\| w\|^2\Big\} \leq \frac{\delta}{4}k(V_Q(z))\| z \|^2 +  \eta(\| w\|)\| w\|^2\,.
\end{equation} 
The same argument with~$\| w\|$ replaced by~$\|C z\|$ yields that 
\begin{equation}\label{eq:p7}
 k(V_Q(z)) \| z \|\| C z\|  \leq \frac{\delta}{4} k(V_Q(z)) \| z \|^2 +  \eta(\|Cz\|)\|Cz\|^2\,.
\end{equation}
Summing~\eqref{eq:p6} and~\eqref{eq:p7} gives
\begin{align}
 k(V_Q(z)) \| z \|(\| C z \| + \|w\|) 
& \leq \frac{\delta}{2}k(V_Q(z))\| z \|^2 + \eta(\| C z \|) \| Cz \|^2 +  \eta(\|w \|) \|w \|^2 \notag \\
& \leq \frac{\delta}{2}k(V_Q(z))\| z \|^2  + \langle w, C z \rangle\,, \label{eq:p8}
\end{align}
by~\eqref{eq:g_property_2}. 

{\sc Case 2:~$\| C z \| > \mu$}. Note that by construction of~$k$ it follows that
\[ z \mapsto k(V_Q(z)) \| z\| \quad \text{is bounded by~$\eps$}\,.\]
Consequently,
\begin{align}
 k(V_Q(z))  \| z \|(\| C z \| + \| w\|) & \leq  \eps \big(\| C z \| + \| w\|\big)  \leq    \langle w, C z \rangle  \leq \frac{\delta}{2}  k(V_Q(z))\| z \|^2 + \langle w, C z\rangle\,, \label{eq:p9}
\end{align}
by~\eqref{eq:g_property_3}. The conjunction of~\eqref{eq:p8} and~\eqref{eq:p9} establishes~\eqref{eq:iss_cross_estimate}, completing the proof of statement~\ref{ls:inclusion_ISS}. 

%
%
As shall become apparent, we prove statement~\ref{ls:inclusion_exp_ISS} next. For the proofs of statements~\ref{ls:inclusion_sg_exp_ISS} and~\ref{ls:inclusion_exp_ISS}, since~$\alpha$ appears as a lower bound in the estimate~\eqref{eq:F_sector_1}, it suffices to replace~$\alpha$ by~$\alpha(s) := \eps_0 s$. 

\ref{ls:inclusion_exp_ISS} With our previous assumption on~$\alpha$, the inequality~\eqref{eq:F_sector_1} becomes
%
\[ \eps_0^2 \| y \| \leq \langle y, w \rangle \quad \forall \: w \in F(y), \; \forall \: y \in \mR^m\,,\]
so that~\eqref{eq:inclusion_iss_p1} admits the estimate
\begin{equation}\label{ls:exp_iss_p1}
\langle (\nabla V_P)(z), Az - B(w - u)\rangle  \leq -\delta_1 \| C z\|^2 + \delta_2 \| u\|^2\,,
\end{equation}
for some suitable constants~$\delta_1, \delta_2 >0$.
 The linear boundedness assumption on~$\theta$ ensures the existence of~$L_0 > 0~$ such that
\[ \| w \| \leq \theta(\|y\|) \leq L_0 \| y \| \quad \forall \: w \in F(y), \; \forall \: y \in \mR^m\,.\]
Therefore, the estimate~\eqref{eq:iss_p2} becomes
\begin{equation}\label{ls:exp_iss_p2}
\langle (\nabla V_Q)(z), Az - B(w - u)\rangle  \leq -\delta_3 \| z\|^2 + \delta_4 \| C z \|^2 + \| u\|^2\,,
\end{equation}
now for some suitable~$\delta_3, \delta_4 >0$. In light of~\eqref{ls:exp_iss_p1} and~\eqref{ls:exp_iss_p2}, it is clear that
\[ V: = V_P + \frac{\delta_1}{\delta_4}V_Q\,,\]
satisfies
\[ \langle (\nabla V)(z), Az - B(w - u)\rangle  \leq -\delta_5 \| z\|^2 +  \delta_6 \| u\|^2\,, \]
for some suitable constants~$\delta_5, \delta_6 >0$. Consequently,~$V$ is an exponential ISS Lyapunov function, and the claim is proven via an application of Lemma~\ref{lem:inclusion_ISS_lyapunov}.

\ref{ls:inclusion_sg_exp_ISS} Let~$R >0$ be fixed, but arbitrary. Since~$\alpha(s) = \eps_0 s$ satisfies~$\alpha \in \sK_\infty$, it follows from statement~\ref{ls:inclusion_ISS} that there exists~$b >0$ such that for all~$(x(0) , v) \in \mR^n \times L^\infty(\mR_+,\mR^m)$ with~$\| x(0)\| + \| v\|_{L^\infty} \leq R$, 
\[ \| C x(t) \| + \| x(t) \| \leq b \quad \forall \: t \geq 0\,.\]
Since~$\theta$ is continuously differentiable with~$\theta(0) =0$, it is locally Lipschitz and, hence, there exists~$L_1 >0$ such that
\begin{equation}\label{eq:inclusion_sg_exp_iss_p1}
    \| w  \| \leq \theta(\|y \|) \leq L_1 \| y \| \quad \forall \: w \in F(y), \; \forall \: y \in {\rm cl}\,\mB(0,b)\,.
\end{equation}
Without loss of generality, we assume that~$b \geq \mu$ as in~\eqref{eq:F_sector_2}. 
Define~$\tilde F$ by
\[
    \tilde F(y) : = \big\{ w \in \mR^m \: : \: \| w \| \leq L_1 \|y\| \quad \text{and~\eqref{eq:F_sector} holds with~$\alpha$ a linear function}\big\}\,.
\]
Therefore, an application of statement~\ref{ls:inclusion_exp_ISS} gives that Lur'e inclusion~$(A,B,C,\tilde F)$ is exponentially ISS. However, by construction and~\eqref{eq:inclusion_sg_exp_iss_p1}, it follows that if~$(u,x) \in \sB_{\rm inc}^\infty(F)$ with~$\| x(0)\| + \| v\|_{L^\infty} \leq R$, then~$(u,x) \in \sB_{\rm inc}^\infty(\tilde F)$, as required.
\end{proof}

%
%
The next theorem guarantees the existence of certain stability properties and Lyapunov functions for an unforced case of Lur'e inclusion~\eqref{eq:lure_inclusion}. Here we are able to weaken the assumptions on~$F$ in~\eqref{eq:F_passive_inclusion}.  
\begin{theorem}\label{thm:KL_stability}
Consider the unforced Lur'e inclusion~$(A,B,C,F_0)$ where
\begin{equation}\label{eq:F_passive_inclusion_0}
F_0(y) : = \big\{ w \in \mR^m \: : \: \| w \| \leq \theta(\|y \|) \quad \text{and} \quad \langle w, y \rangle \geq \| y \| \alpha(\|y\|)\big\}\,,  
\end{equation}
for~$\theta \in \sK_\infty$,~$\alpha \in \sP$ with~$\theta \leq \alpha$, and where
\begin{equation}\label{eq:technical}
    \quad s \mapsto r(s) : = \sqrt{\theta^2(s) - \alpha^2(s)} \quad \text{is non-decreasing and locally Lipschitz.}
\end{equation}
Moreover, assume that the pair~$(C,A)$ is detectable, and that there exists a symmetric positive definite matrix~$P\in\mR^{n\times n}$ satisfying~\eqref{eq:P_lmi_condition}. The following statements hold.
\begin{enumerate}[label={\rm \bf (\arabic*)}, ref = {\rm \bf (\arabic*)}]
\item \label{ls:inclusion_GAS_1} The quadratic form~$V_P(z):=\langle Pz,z\rangle$ satisfies
\begin{equation}\label{eq:V_orbital_derivative}
\langle(\nabla V_P)(z),w\rangle\leq 0\quad\forall\:w\in Az -BF(Cz),\,\,\forall\: z\in\mR^n\,.
\end{equation}
Furthermore, for each compact set~$\Gamma\subset\mR^n$ such that~$\min_{z\in\Gamma}\|Cz\|>0$,
there exists~$\nu>0$ such that
\begin{equation}\label{eq:V_orbital_derivative<0}
\langle(\nabla V_P)(z),w\rangle\leq-\nu\quad\forall\:w\in Az-BF(Cz),\,\,\forall\:z\in\Gamma\,.
\end{equation}
\item \label{ls:inclusion_GAS_2} There exists~$\kappa>0$ such that~$\|x(t)\|\leq\kappa\|x(0)\|$ for all~$t\geq 0$ and all
$x\in W^{1,1}_{\rm loc}(\mR_+,\mR^n)$ such that~$(0,x)\in\sB_{\rm inc}$, that is,~$0$ is uniformly stable in the large. In particular, for every $v \in L^1(\mR_+,\mR^m)$, every solution $x$ of~\eqref{eq:lure_inclusion} may be extended to a global one.
\item \label{ls:inclusion_GAS_3} For every~$\eps>0$ and every~$\rho>0$, there exists~$\tau\geq 0$ such that, for each~$x\in W^{1,1}_{\rm loc}(\mR_+,\mR^n)$ with~$(0,x)\in\sB_{\rm inc}$, the following implication
holds
\[
\|x(0)\|\leq\rho\quad\Rightarrow\quad\|x(t)\|\leq\eps\,\,\,\forall\:t\geq\tau\,,
\]
that is,~$0$ is uniformly globally attractive.
\item \label{ls:inclusion_GAS_4} There exists a radially unbounded smooth function~$U:\mR^n\to\mR_+$ such that
$U(z)>0$ for all~$z \neq 0$ and
\begin{equation}\label{eq:orbital_derivative}
\langle(\nabla U)(z),w\rangle\leq-U(z)\quad\forall\:w\in Az-BF_0(Cz),\,\,\forall\:z\in\mR^n\,.
\end{equation}
The function~$U$ depends only on the linear data~$(A,B,C)$, and the bounding terms~$\alpha$ and~$\theta$.
\end{enumerate}
\end{theorem}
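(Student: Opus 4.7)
The plan is to prove statements \ref{ls:inclusion_GAS_1}--\ref{ls:inclusion_GAS_3} by direct Lyapunov arguments, and then to deduce statement \ref{ls:inclusion_GAS_4} from a smooth converse Lyapunov theorem for differential inclusions, namely that of Teel and Praly~\cite{teel2000smooth}.

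For \ref{ls:inclusion_GAS_1}, I would proceed exactly as in the derivation of~\eqref{eq:inclusion_iss_p1} with $u=0$: the LMI~\eqref{eq:P_lmi_condition} yields $\langle(\nabla V_P)(z), Az - Bw\rangle \leq -2\langle w, Cz\rangle$, and the sector bound built into~\eqref{eq:F_passive_inclusion_0} gives $\langle w, Cz\rangle \geq \|Cz\|\alpha(\|Cz\|) \geq 0$ for every $w \in F_0(Cz)$, establishing~\eqref{eq:V_orbital_derivative}. For~\eqref{eq:V_orbital_derivative<0}, on any compact $\Gamma$ with $\min_{z \in \Gamma} \|Cz\| > 0$, the continuous map $z \mapsto 2\|Cz\|\alpha(\|Cz\|)$ attains a positive minimum $\nu$ (using $\alpha \in \sP$), which supplies the required uniform bound. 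Statement \ref{ls:inclusion_GAS_2} then follows from positive-definiteness of $P$: the comparison $\lambda_{\min}(P)\|z\|^2 \leq V_P(z) \leq \lambda_{\max}(P)\|z\|^2$ combined with the non-increasing property of $V_P$ along unforced trajectories yields $\|x(t)\| \leq \kappa \|x(0)\|$ with $\kappa = \sqrt{\lambda_{\max}(P)/\lambda_{\min}(P)}$. For $v \in L^1(\mR_+,\mR^m)$, inserting the forcing into the Lyapunov computation gives
\[
\tfrac{d}{dt} V_P(x(t)) \leq 2\langle P x(t), B v(t)\rangle \leq c \sqrt{V_P(x(t))}\,\|v(t)\|,
\]
for a suitable $c>0$, whence $\sqrt{V_P(x(t))}$ is bounded by $\sqrt{V_P(x(0))} + (c/2)\|v\|_{L^1}$ on any interval of existence, and extension to a global solution is concluded via fact~\ref{ls:F}.

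For \ref{ls:inclusion_GAS_3}, I would argue by a LaSalle--Barbalat-type chain of deductions. The uniform stability from \ref{ls:inclusion_GAS_2} confines unforced trajectories with $\|x(0)\| \leq \rho$ to the ball $\|x\| \leq \kappa\rho$. Integrating the orbital-derivative bound from \ref{ls:inclusion_GAS_1} gives $\int_0^\infty \|Cx(s)\|\alpha(\|Cx(s)\|)\,\rd s < \infty$; boundedness of $x$ forces boundedness of $\dot x$ and hence uniform continuity of the integrand, so Barbalat's lemma implies $Cx(t) \to 0$. Consequently, for any measurable selection $w(t) \in F_0(Cx(t))$, the bound $\|w\| \leq \theta(\|Cx\|)$ gives $w(t) \to 0$. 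Detectability of $(C,A)$ supplies an $L$ with $A - LC$ Hurwitz, and the identity $\dot x = (A - LC)x + (LCx - Bw)$ with vanishing forcing term yields $x(t) \to 0$. Uniformity in $\|x(0)\| \leq \rho$ follows from the standard fact that, for inclusions with upper hemicontinuous, convex, compact-valued right-hand sides, pointwise asymptotic stability on a compact forward-invariant set upgrades to uniform attractivity.

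Statement \ref{ls:inclusion_GAS_4} is the main obstacle. Together \ref{ls:inclusion_GAS_2} and \ref{ls:inclusion_GAS_3} assert that the unforced inclusion is $0$-UGAS, equivalently, its global trajectories admit a $\sK\sL$-estimate. The right-hand side $z \mapsto Az - BF_0(Cz)$ is upper hemicontinuous with non-empty, compact, convex values, where the regularity hypothesis~\eqref{eq:technical} on $r = \sqrt{\theta^2 - \alpha^2}$ supplies the additional Hausdorff-continuity needed to fall within the scope of the smooth converse Lyapunov theorem of Teel and Praly~\cite{teel2000smooth}. Invoking that theorem, I obtain a smooth, radially unbounded $U : \mR^n \to \mR_+$, positive away from the origin, together with some $\gamma \in \sK_\infty$ such that $\langle(\nabla U)(z), w\rangle \leq -\gamma(\|z\|)$ for every $w \in Az - BF_0(Cz)$. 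The sharper conclusion~\eqref{eq:orbital_derivative} of the form $\leq -U(z)$ is reached by a standard reshaping trick: composing $U$ with an increasing concave $h \in \sK_\infty$ chosen so that $h'(U)\gamma \geq h(U)$ (solving a scalar first-order ODE) produces a smooth radially unbounded $\tilde U := h \circ U$ with $\langle (\nabla \tilde U)(z), w \rangle \leq -\tilde U(z)$. The dependence of $U$ only on $(A,B,C,\alpha,\theta)$ mirrors the dependence of the inclusion data on these quantities throughout the argument.
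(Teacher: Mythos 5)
Your plan for statements \ref{ls:inclusion_GAS_1}, \ref{ls:inclusion_GAS_2} and \ref{ls:inclusion_GAS_4} is essentially the same as the paper's. For \ref{ls:inclusion_GAS_1} you replace the paper's sequential compactness/contradiction argument by direct minimization of $z\mapsto\|Cz\|\alpha(\|Cz\|)$ over $\Gamma$, which is perfectly valid; your Gronwall argument for the $L^1$-forcing part of \ref{ls:inclusion_GAS_2} is also correct and is more explicit than the paper, which leaves it implicit. For \ref{ls:inclusion_GAS_4}, both you and the paper route through \cite{teel2000smooth}, noting that local Lipschitz continuity of $F_0$ (secured by hypothesis~\eqref{eq:technical}, and proved in the paper as Lemma~\ref{lem:inclusion_F0_prep}) is what upgrades $\sK\sL$-stability to \emph{robust} $\sK\sL$-stability; your additional reshaping trick is harmless but actually unnecessary since \cite[Theorem 1]{teel2000smooth} already delivers the bound in the exact form $\leq -U$.

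The genuine gap is in \ref{ls:inclusion_GAS_3}. Your Barbalat-type argument does establish that $Cx(t)\to 0$ and hence $x(t)\to 0$ \emph{for each fixed trajectory}, which is pointwise attractivity, not the uniform global attractivity that \ref{ls:inclusion_GAS_3} requires (and which \cite[Proposition 1]{teel2000smooth} needs in order to conclude $\sK\sL$-stability). You cover the shortfall with the assertion that ``pointwise asymptotic stability on a compact forward-invariant set upgrades to uniform attractivity'' for inclusions with the usual USC/compact/convex-valued structure. That upgrade is not routine: it amounts to compactness of the solution set in $C_{\rm loc}$ plus a diagonal argument, and is precisely the kind of subtlety that fails without the structural hypotheses you invoke and that requires a specific citation or proof, especially because differential inclusions admit non-unique solutions, so one must be uniform simultaneously over initial conditions \emph{and} over selections. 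The paper avoids this by working entirely with quantities that depend only on $\rho$: it replaces $\alpha\in\sP$ by a $\tilde\alpha\in\sK_\infty$ matching $\alpha$ on the bounded output range, derives a trajectory-uniform integral bound $\int_0^\infty\beta(\|Cx(s)\|)\,{\rm d}s\le\|P\|\rho^2$ with a uniform modulus of continuity for $Cx$ (since $\|\dot x\|$ is bounded by a constant depending only on $\rho$), and then closes the loop using output injection ($A-HC$ Hurwitz) and a variation-of-parameters estimate. If you want your Barbalat route to be rigorous, you must either cite a compactness-of-solutions result that delivers the uniform upgrade for this class of inclusions, or redo the decay estimate so that the ``waiting time'' $\tau_0$ is manifestly a function of $\rho$ and $\eps$ alone, along the lines the paper does.
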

Note that in statement~\ref{ls:inclusion_GAS_4}, as~$F_0(0)=\{0\}$, we may conclude that~$U(0)=0$, and the hypotheses on~$U$ guarantee that there exist~$\alpha_1,\alpha_2\in\sK_\infty$ such that~$\alpha_1(\|z\|)\leq U(z)\leq\alpha_2(\|z\|)$
for all~$z\in\mR^n$. It follows from~\eqref{eq:orbital_derivative} that
\[
\langle(\nabla U)(z),w\rangle\leq-\alpha_1(\|z\|)\quad\forall\:w\in Az-BF_0(Cz),\,\,\forall\:z\in\mR^n\,.
\]
Consider~$F$ and~$F_0$ given by~\eqref{eq:F_passive_inclusion} and~\eqref{eq:F_passive_inclusion_0}, respectively, for the same~$\theta$ and~$\alpha$. Observe that~$F(y) \subseteq F_0(y)$ for all~$y \in \mR^m$. Consequently, if~$(0,x) \in \sB_{\rm inc}(F)$, then~$(0,x) \in \sB_{\rm inc}(F_0)$.

The hypothesis~\eqref{eq:technical} is used in our proof of Theorem~\ref{thm:KL_stability}. Since~$\theta$ and~$\alpha$ play the role of upper- and lower-bounds, respectively, it does not seem restrictive and can be satisfied by making~$\theta$ ``larger''. Indeed, replacing~$\theta$ by~$\theta + \alpha$ gives that~$r : \mR_+ \to \mR_+$ satisfies
\[ \theta(s) \leq r(s) = \sqrt{\theta^2(s) + 2\alpha(s) \theta(s)} \leq \sqrt{3} \theta(s) \quad \forall \: s \geq 0\,,\]
so that~$r$ is non-decreasing, continuously differentiable on~$(0,\infty)$ (when~$\alpha$ and~$\theta$ are) and bounded from above and below by locally Lipschitz functions, and hence is itself locally Lipschitz. Note that~$\theta$ does not play an explicit role in the hypotheses of Theorem~\ref{thm:KL_stability}, and rather is imposed to ensure uniformity in the various stability results. 

The next technical lemma gathers properties of~$F_0$. The proof of the lemma appears in the Appendix.
%
%
\begin{lemma}\label{lem:inclusion_F0_prep}
 Let~$\theta \in \sK_\infty$,~$\alpha \in \sP$  be given such that~$\theta \leq \alpha$ and~\eqref{eq:technical} holds. The correspondence~$F_0$ given by~\eqref{eq:F_passive_inclusion_0} takes non-empty-, convex- and compact-values, is upper hemicontinuous and locally Lipschitz.
\end{lemma}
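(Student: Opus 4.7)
The plan is to verify each of the five claimed properties (non-empty-, convex- and compact-valuedness; upper hemicontinuity; local Lipschitz continuity) in turn; the first four are essentially routine set-theoretic checks and the last is the main technical step.

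First, I would verify non-emptiness by exhibiting the explicit selection $w(y) := \alpha(\|y\|)\,y/\|y\|$ (with $w(0) := 0$): indeed $\|w(y)\| = \alpha(\|y\|) \leq \theta(\|y\|)$ and $\langle w(y), y\rangle = \|y\|\alpha(\|y\|)$, so $w(y) \in F_0(y)$. Convexity follows since $F_0(y)$ is the intersection of a closed Euclidean ball and a closed half-space in the variable $w$. Closedness is immediate from continuity of the defining inequalities in $w$, and boundedness from $F_0(y) \subseteq \cl \mB(0, \theta(\|y\|))$; combined, these give compactness. For upper hemicontinuity, since $F_0$ has compact values it suffices to combine local boundedness (again via $F_0(y) \subseteq \cl \mB(0, \theta(\|y\|))$) with a closed-graph check: if $(y_n, w_n) \to (y, w)$ with $w_n \in F_0(y_n)$, passing to the limit in the defining inequalities preserves them, so $w \in F_0(y)$.

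The bulk of the work goes into the local Lipschitz estimate. I would fix $R > 0$ and show the existence of $L_R > 0$ with $d_H(F_0(y_1), F_0(y_2)) \leq L_R \|y_1 - y_2\|$ for all $y_1, y_2 \in \cl \mB(0, R)$, where $d_H$ denotes Hausdorff distance. The key idea is the rotational decomposition $F_0(y) = R_y\, K(\|y\|)$ for $y \neq 0$, where $R_y$ is any rotation sending $e_1$ to $\hat y := y/\|y\|$ (chosen to depend smoothly on $\hat y$, e.g.\ via a Householder reflection) and
\[ K(\rho) := \{w \in \mR^m : w_1 \geq \alpha(\rho),\, \|w\| \leq \theta(\rho)\}\,. \]
The triangle inequality then splits the Hausdorff distance into a \emph{radial} term $d_H(K(\|y_1\|), K(\|y_2\|))$ and a \emph{rotational} term $d_H(R_{y_1} K(\|y_2\|), R_{y_2} K(\|y_2\|))$. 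The radial term is controlled by an explicit parametrization of the spherical cap $K(\rho)$, whose extremal points are governed by $\theta(\rho)$, $\alpha(\rho)$, and $r(\rho)$; local Lipschitz continuity of these three scalar functions (that of $r$ being hypothesis~\eqref{eq:technical}, while $\theta$ may be taken continuously differentiable per the remark after Theorem~\ref{thm:KL_stability}, so that $\alpha = \sqrt{\theta^2 - r^2}$ is locally Lipschitz) delivers a bound $O(|\|y_1\| - \|y_2\||) \leq O(\|y_1 - y_2\|)$. The rotational term is bounded by $\mathrm{diam}(K(\|y_2\|)) \cdot \|R_{y_1}^{-1}R_{y_2} - I\| \leq 2\theta(\|y_2\|)\,C\|\hat y_1 - \hat y_2\|$.

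The hard part will be the rotational term near $y = 0$, since $\hat y$ fails to be Lipschitz in $y$ at the origin. The remedy is the compensating factor $\theta(\|y_2\|) \leq L_\theta \|y_2\|$, valid because $\theta$ is locally Lipschitz with $\theta(0) = 0$. A short case analysis based on the elementary identity
\[ \|y_2\|(\hat y_1 - \hat y_2) = \tfrac{\|y_2\|}{\|y_1\|}(y_1 - y_2) + \tfrac{\|y_2\| - \|y_1\|}{\|y_1\|}\,y_2 \]
establishes $\theta(\|y_2\|) \|\hat y_1 - \hat y_2\| \leq C'_R \|y_1 - y_2\|$ on $\cl \mB(0, R)$: either $\|y_1\| \geq \|y_2\|/2$ (or vice versa), in which case the right-hand side of the identity is bounded by $4\|y_1 - y_2\|$; or $\|y_1\| < \|y_2\|/2$, in which case $\|y_1 - y_2\| \geq \|y_2\|/2$ already absorbs the (uniformly bounded) rotation. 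The limit cases where $y_1 = 0$ or $y_2 = 0$ follow directly from $d_H(\{0\}, F_0(y)) = \theta(\|y\|) \leq L_\theta \|y\|$, completing the estimate.
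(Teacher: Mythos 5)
Your treatment of non-emptiness, convexity, compactness and upper hemicontinuity matches the paper's (the paper simply defers these to its Appendix A argument for $F$). For the local Lipschitz estimate you take a genuinely different and in some ways cleaner route: exploiting the rotational symmetry of $F_0(y)$ about $\hat y$ to factor $F_0(y)=R_y K(\|y\|)$, then splitting $d_H$ into a radial term $d_H(K(\|y_1\|),K(\|y_2\|))$ and an angular term. The paper instead fixes a \emph{shared} two-dimensional frame adapted to $y_1$ and $y_2$, writes each $w_i\in F_0(y_i)$ as $a_i\hat y_i+b_i\hat y_i^\perp$, builds a uniformly bounded operator $R$ sending $\hat y_i\mapsto\hat y_i^\perp$, and reduces everything to a case analysis in the $(a,b)$-plane. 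Your split is conceptually tidier; the paper's is more explicit about where the real estimate lives.

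There are, however, two gaps you would need to close. First, the parenthetical ``via a Householder reflection'' is wrong: with $v=e_1-\hat y$ the map $H_v=I-2vv^\top/\|v\|^2$ does send $e_1\mapsto\hat y$, but $vv^\top/\|v\|^2$ has no limit as $\hat y\to e_1$ (it depends on the direction of approach), so $H_v$ is discontinuous precisely where you need it Lipschitz. The fix is to work with $S:=R_{\hat y_1}^{-1}R_{\hat y_2}$, observe $\|Se_1-e_1\|=\|\hat y_1-\hat y_2\|$, and take $S$ to be the plane rotation in $\mathrm{span}\{e_1,Se_1\}$, which is Lipschitz in $Se_1$ away from $-e_1$; the antipodal regime is then absorbed by your second case where $\|y_1-y_2\|$ is already comparable to $\|y_2\|$. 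Second, the radial estimate $d_H(K(\rho_1),K(\rho_2))\lesssim|\rho_1-\rho_2|$ is asserted but not established, and it is not immediate: this is exactly where the paper spends most of its effort, distinguishing the two geometric configurations of the caps (its ``Case 1: $\alpha_2\geq a_3$'' versus ``Case 2: $\alpha_2<a_3$'', where $a_3=\alpha_1\theta_2/\theta_1$) and producing an explicit nearest point in each. You would need to carry out a comparable argument. Incidentally, the claim that $\alpha=\sqrt{\theta^2-r^2}$ is locally Lipschitz does not follow from that formula (the square root is not Lipschitz at the origin); the correct justification, which the paper relies on, is the standing convention from the Notation section that $\sP$ lower bounds may be taken $C^1$.

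On the positive side, your identification of the difficulty near $y=0$ is a genuine improvement in exposition: the paper's appeal to local Lipschitz-ness of $z\mapsto z/\|z\|$ on $\mR^m\setminus\{0\}$ does not by itself control the angular contribution near the origin, and the compensating factor $\theta(\|y_i\|)=O(\|y_i\|)$ together with your identity and the dichotomy $\|y_1\|\gtrless\|y_2\|/2$ is precisely the argument needed to make that step rigorous.
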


%
%
\begin{proof}[Proof of Theorem~\ref{thm:KL_stability}]
\ref{ls:inclusion_GAS_1} Let~$P=P^\top \in\mR^{n\times n}$ be a positive definite solution of~\eqref{eq:P_lmi_condition}. Taking~$u = 0$ in~\eqref{eq:inclusion_iss_p1} gives
\begin{align}
\langle (\nabla V_P)(z), Az - Bw \rangle & \leq - 2\langle w, C z \rangle \leq - \| Cz \| \alpha(\| Cz\|) \leq 0\quad \forall\: w \in F_0(Cz), \; \forall \: z \in \mR^n\,,\label{eq:inclusion_prep_0}
\end{align}
from which the inequality~\eqref{eq:V_orbital_derivative} readily follows. Note that the inequality~\eqref{eq:F_sector_2} (omitted from~$F_0$) is not required for the inequality~\eqref{eq:inclusion_iss_p1}.

To establish~\eqref{eq:V_orbital_derivative<0}, let~$\Gamma\subset\mR^n$ be compact and such that
$\min_{z\in\Gamma}\|Cz\|>0$. Seeking a contradiction, suppose that the claim is not true, in which case
\eqref{eq:inclusion_prep_0} implies the existence of a sequence~$\big((z_j,w_j)\big)_{j}$ with~$z_j\in\Gamma$ and~$w_j\in F(Cz_j)$ such that
\begin{equation}\label{eq:contradiction}
\lim_{j\to\infty}\langle w_j, C z_j \rangle =0\,.
\end{equation}
As  the sequence~$\big((z_j,w_j)\big)_{j}$ is bounded, it has a convergent subsequence, the limit of which we denote by~$(z_\infty,w_\infty)$. Using the compactness of~$\Gamma$ and~$F(y)$ for all~$y\in\mR^m$ and the upper hemicontinuity of~$F$, we conclude that~$z_\infty\in\Gamma$ and~$w_\infty\in F(Cz_\infty)$. Consequently,~$Cz_\infty\not=0$, and, by~\eqref{eq:F_sector_1} as~$\alpha \in \sP$, 
\[ \langle w_\infty, C z_\infty \rangle \geq \| C z_\infty\| \alpha(\| C z_\infty\|) >0\,,\]
yielding a contradiction to~\eqref{eq:contradiction}.

\ref{ls:inclusion_GAS_2}  For~$x\in W^{1,1}_{\rm loc}(\mR_+,\mR^n)$ such that~$(0,x)\in\sB_{\rm inc}$
it follows from~\eqref{eq:V_orbital_derivative} that~$({\rm d} (V_P\circ x)/{\rm d}t)(t)\leq 0$
for almost every~$t\geq 0$, and so
\[
V_P(x(t))\leq V_P(x(0))\quad\forall\:t\geq 0.
\]
The claim now follows as the map~$z\mapsto\sqrt{V_P(z)}$ is a norm on~$\mR^n$.

\ref{ls:inclusion_GAS_3}  Let~$\eps,\rho>0$ be given. By statement (2), there exists~$\kappa>0$ such that,
\[
\| x(t)\| \leq \kappa \| x(0)\| \quad \forall\:t \geq 0,\,\,\forall\:
x\in W^{1,1}_{\rm loc}(\mR_+,\mR^n)\,\,\,\mbox{with~$(0,x)\in\sB_{\rm inc}$},
\]
and so,
\begin{equation}\label{eq:tilde_F_inclusion_p1}
\|Cx(t)\| \leq \kappa\rho \| C \| =: b \quad \forall\:t \geq 0,\,\,\forall\:x\in W^{1,1}_{\rm loc}(\mR_+,\mR^n)\,\,\,\mbox{with~$(0,x)\in\sB_{\rm inc}$ and~$\|x(0)\|\leq\rho$}.
\end{equation}
An application of~\cite[Lemma 18]{MR3245919} yields~$\alpha_1 \in \sK_\infty$ and~$\sigma \in \sL$ such that
\[
\alpha(s) \geq \alpha_1(s) \sigma(s) \quad\forall\:s\geq 0\,.
\]
Define~$\tilde \alpha \in \sK_\infty$ by 
\[ \tilde \alpha(s) = \left\{ \begin{aligned} &\sigma(b) \alpha_1(s)\,, & & s\leq b\,, \\ & \sigma(b) \alpha_1(b) + \Big(1 - \frac{b}{s}\Big)s  & & s>b\,. \end{aligned} \right.\]
Now define the correspondence~$\tilde F_0$ by
\[
\tilde F_0(y) : = \big\{ w \in \mR^m \: : \: \| w \| \leq \theta(\|y \|) \quad \text{and} \quad \langle w, y \rangle \geq \| y \| \tilde \alpha(\|y\|)\big\}\,,  
\]
which satisfies~$\alpha(s) \geq \tilde \alpha(s)$ on~$[0,b]$. Consequently, by construction, 
\begin{equation}\label{eq:tilde_F_inclusion_p2}
    F_0(y) \subseteq \tilde F_0(y) \quad \forall \: y \in {\rm cl}\,\mB(0,b)\,.
\end{equation} 
We claim that 
\begin{equation}\label{eq:tilde_F_inclusion_p3}
    \text{$0$ is uniformly globally attractive for the unforced Lur'e inclusion~$(A,B,C,\tilde F_0)$.}
\end{equation}
Assuming for now that~\eqref{eq:tilde_F_inclusion_p3} holds, let~$(0,x)\in\sB_{\rm inc}(F_0)$ with~$x\in W^{1,1}_{\rm loc}(\mR_+,\mR^n)$ and~$\|x(0)\|\leq\rho$. It follows from~\eqref{eq:tilde_F_inclusion_p1} and~\eqref{eq:tilde_F_inclusion_p2}, that~$(0,x)\in\sB_{\rm inc}(\tilde F_0)$. In particular, property~\eqref{eq:tilde_F_inclusion_p3} ensures the existence of~$\tau >0$ such that
\[ \| x(t) \| \leq \eps \quad \forall \: t \geq \tau\,,\]
establishing the original uniform attractivity claim, as required.

It remains to establish~\eqref{eq:tilde_F_inclusion_p3}. For which purpose, let~$\rho, \eps >0$ be given, and consider~$(0,x)\in\sB_{\rm inc}(\tilde F_0)$ with~$x\in W^{1,1}_{\rm loc}(\mR_+,\mR^n)$ and~$\|x(0)\|\leq\rho$. An application of statement~\ref{ls:inclusion_GAS_2} to Lur'e inclusion~$(A,B,C,\tilde F_0)$ gives that~$x$ is bounded, and hence~$\dot x$ is bounded from set membership of a bounded set. Consequently,~$x$ is uniformly continuous. Invoking the inequality~\eqref{eq:inclusion_prep_0} with~$\alpha$ replaced by~$\tilde \alpha$, and integrating between~$0$ and~$t\geq 0$, gives that 
\begin{equation}\label{eq:F_tilde_UA_p1}
  \int_0^t \beta(\| Cx(s) \|) \, \rd s = \int_0^t \| C x(s) \| \tilde \alpha\big(\| C x(s)\|\big) \, \rd s \leq V_P(x(0)) \leq \| P \| \rho^2 \quad \forall \: t \geq 0\,, 
\end{equation}
where~$s \mapsto \beta(s) : = s \tilde \alpha(s) \in \sK_\infty$.  We claim that, for all~$\eps_0>0$, there exists~$\tau_0 >0$ such that 
\begin{equation}\label{eq:F_tilde_UA_p2}
\| Cz(t) \| \leq \eps_0 \quad \forall \: t \geq \tau_0\,. 
\end{equation}
Seeking a contradiction, suppose that this claim is false. In particular, there exists~$\eps^* >0$ and a non-decreasing real sequence~$(t_k)_k$ with~$t_k \geq k$ and such that
\[\| Cx(t_k) \| \geq \eps^* \quad \forall \: k \in \mN\,. \]
We may assume that~$\inf_{k \in \mN} \lvert t_{k+1} - t_k \rvert >0$. By uniform continuity of~$Cx$, there exists~$\delta >0$ such that
\[ \| Cx (t) \| \geq \frac{\eps^*}{2} \quad \forall \: t \in[t_k-\delta, t_k + \delta] \quad \forall \: k \in \mN\,,\]
and so
\[ \beta(\| Cx(t) \|) \geq \beta(\eps^*/2) >0 \quad \forall \: t \in[t_k-\delta, t_k + \delta] \quad \forall \: k \in \mN\,.\]
However, the above lower bound contradicts~\eqref{eq:F_tilde_UA_p1}.

Let~$H \in \mR^{n \times m}$ be such that~$A-HC$ is Hurwitz, the existence of which follows from the detectability hypothesis. Observe that the Lur'e inclusion~$(A,B,C,\tilde F_0)$ with~$v = 0$ may be expressed as
\begin{equation}\label{eq:F_tilde_UA_p3} 
\dot x(t) - (A-HC)x(t) - HCx(t) \in -B\tilde F_0(Cx(t)) \quad t \geq 0\,.
\end{equation}
The variation of parameters formula for the Lur'e inclusion~\eqref{eq:F_tilde_UA_p3} (see, for example~\cite[Lemma 2.1]{guiver2019small}) gives
\begin{equation}\label{eq:F_tilde_UA_p4}
    x(t+\tau_0) - \e^{(A - HC)t}x(\tau_0) \in  \int_{\tau_0}^{t+\tau_0} \e^{(A-HC)(t+\tau_0-s)}\big(HCx(s) - B\tilde F_0(Cx(s))\, \rd s \quad \forall \: t \geq 0\,.
\end{equation} 
Here recall that if~$z : \mR_+ \to \mR^n$ is a locally absolutely continuous solution of the differential inclusion~$\dot z(t) \in H(t,z(t))$ for correspondence~$H : \mR_+ \times \mR^n \rightrightarrows \mR^n$, then~$\dot z$ is a locally integrable selection of~$H(t,z(t))$, so that 
\[ \int_{t_1}^{t_2} H(s,z(s))\, \rd s := \Big\{ \int_{t_1}^{t_2} h(s) \, \rd s \: : \: h \in L^1(t_1,t_2, \mR^n) \; \text{is a selection of~$t \mapsto H(t,z(t))$}\Big\} \neq \emptyset\,,\]
and, with this notation,
\[ z(t_1) - z(t_2) \in \int_{t_1}^{t_2} H(s,z(s))\, \rd s \quad \forall \: t_2 \geq t_1 \geq 0\,.\]
Routine estimates of~\eqref{eq:F_tilde_UA_p4}, invoking~\eqref{eq:F_tilde_UA_p2}, that~$A-HC$ is Hurwitz, and that~$\|w \| \leq \theta(\|Cz(t)\|)$ for all~$w \in \tilde F_0(Cz(t))$, now show that 
\[ \|x(t + \tau_0) \| \leq M \e^{-\lambda t} \rho + c_0 M \frac{1}{\lambda}(1 - \e^{-\lambda t})\big (\eps_0 + \theta(\eps_0)\big) \quad \forall \: t \geq 0\,,\]
for some constants~$M, \lambda, c_0 >0$. The right-hand side of the above is no greater than~$\eps$ once~$\eps_0 >0$ is fixed sufficiently small, for all~$t \geq \tau_1$, for some~$\tau_1 >0$. The proof is complete.

\ref{ls:inclusion_GAS_4}  The conclusions of statements~\ref{ls:inclusion_GAS_1}--\ref{ls:inclusion_GAS_3} apply to the differential inclusion~\eqref{eq:lure_inclusion} and so, by~\cite[Proposition 1]{teel2000smooth},~\eqref{eq:lure_inclusion} is~$\sK\sL$-stable~\cite[Definition 6]{teel2000smooth}.

According to~\cite[Theorem 1]{teel2000smooth}, the existence of a claimed function~$U$ is equivalent to the
differential inclusion~\eqref{eq:lure_inclusion} being robustly~$\sK\sL$-stable in the sense of~\cite[Definition 8]{teel2000smooth}. Since~\eqref{eq:lure_inclusion} is~$\sK\sL$-stable, we can use~\cite[Theorem 8]{teel2000smooth} to establish robust~$\sK\sL$-stability by showing that~$F_0$ is locally Lipschitz in the sense of~\cite{teel2000smooth}. However, that~$F_0$ is locally Lipschitz was established in Lemma~\ref{lem:inclusion_F0_prep}. The proof is complete.
\end{proof}

%
%
The next result invokes the previous theorem to establish an IISS property for the Lur'e inclusion~\eqref{eq:lure_inclusion}.
\begin{theorem}\label{thm:iiss_inclusion}
Consider the Lur'e inclusion~$(A,B,C,F_0)$ where~$F_0$ is given by~\eqref{eq:F_passive_inclusion_0} for~$\theta \in \sK_\infty$ and~$\alpha \in \sP$ which sataisfy~\eqref{eq:technical}. Assume further that the pair~$(C,A)$ is detectable, and that there exists a positive definite matrix~$P=P^\top\in\mR^{n\times n}$ satisfying~\eqref{eq:P_lmi_condition}. Then~\eqref{eq:lure_inclusion} is IISS with linear IISS gain, and the IISS comparison functions depend only on the model data~$(A,B,C)$, and the bounding terms~$\theta$ and~$\alpha$.  
\end{theorem}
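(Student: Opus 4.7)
The plan is to construct a smooth IISS Lyapunov function for the Lur'e inclusion and then invoke Lemma~\ref{lem:inclusion_ISS_lyapunov}~\ref{ls:IISS_lyapunov}. Since the hypotheses of Theorem~\ref{thm:KL_stability} are in force, statement~\ref{ls:inclusion_GAS_4} furnishes $\alpha_1,\alpha_2\in\sK_\infty$ and a smooth, radially unbounded $U:\mR^n\to\mR_+$ with $\alpha_1(\|z\|)\leq U(z)\leq \alpha_2(\|z\|)$ and $\langle(\nabla U)(z), v\rangle \leq -U(z)$ for every $v\in Az - BF_0(Cz)$. Applied to the forced inclusion, the affine-in-input structure immediately yields
\[
\langle(\nabla U)(z), Az - B(w-u)\rangle \leq -U(z) + \|(\nabla U)(z)\|\,\|B\|\,\|u\| \qquad \forall\,w\in F_0(Cz),
\]
which is almost the form~\eqref{eq:iiss_lie_dervative}, save that the coefficient of $\|u\|$ is not obviously bounded uniformly in $z$.

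To secure a constant input coefficient I would reshape $U$ by composition. Define
\[
\gamma(s) := \sup\{\|(\nabla U)(z)\|\,\|B\| : \|z\|\leq \alpha_1^{-1}(s)\},\qquad s \geq 0,
\]
which is finite, non-decreasing and continuous by smoothness of $U$, and which satisfies $\|(\nabla U)(z)\|\,\|B\|\leq \gamma(U(z))$ since $\alpha_1(\|z\|) \leq U(z)$. Setting $h(s) := \int_0^s \rd \tau/(1+\gamma(\tau))$ and $V := h\circ U$, the chain rule gives
\[
\langle(\nabla V)(z), Az - B(w-u)\rangle = h'(U(z))\,\langle(\nabla U)(z), Az - B(w-u)\rangle \leq -\frac{U(z)}{1+\gamma(U(z))} + \|u\|,
\]
and the first term is bounded above by $-\mu(\|z\|)$ with $\mu(s) := \alpha_1(s)/(1+\gamma(\alpha_2(s))) \in \sP$. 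Provided $h$ is proper, one also has $\alpha_1',\alpha_2' \in \sK_\infty$ with $\alpha_1'(\|z\|) \leq V(z) \leq \alpha_2'(\|z\|)$, and Lemma~\ref{lem:inclusion_ISS_lyapunov}~\ref{ls:IISS_lyapunov} then delivers IISS with linear IISS gain. Dependence of the comparison functions only on $(A,B,C)$, $\theta$ and $\alpha$ follows by construction, noting that $P$ is determined by $(A,B,C)$ and $U$ by the same data together with $\theta,\alpha$.

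The principal technical obstacle is ensuring that $h$ is unbounded, equivalently $\int_0^\infty \rd \tau/(1+\gamma(\tau)) = \infty$, which requires at most linear growth of $\gamma$. Should this fail for the particular $U$ produced by Teel-Praly, I would combine the above with the passivity-based a priori estimate $\sqrt{V_P(x(t))} \leq \sqrt{V_P(x(0))} + (\|C\|/\sqrt{\lambda_{\min}(P)})\int_0^t\|v(s)\|\rd s$ --- an immediate consequence of~\eqref{eq:P_lmi_condition} together with $\langle w, Cz\rangle \geq 0$ for $w\in F_0(Cz)$ --- which bounds $\|x(t)\|$ linearly in $\|x(0)\|$ and $\|v\|_{L^1}$, and either precompose $U$ with a class-$\sK_\infty$ function chosen to tame its gradient, or invoke additional regularity afforded by the Teel-Praly construction~\cite{teel2000smooth} (which is applicable because $F_0$ is locally Lipschitz by Lemma~\ref{lem:inclusion_F0_prep}).
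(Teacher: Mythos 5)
The paper's proof of Theorem~\ref{thm:iiss_inclusion} is deliberately terse: it states that the argument is identical to that of \cite[Theorem 2.5]{gilmore2021incremental}, which in turn combines \cite[Proposition 3.10]{MR4023134} with \cite[Lemma 3.4]{MR2599348}, with references to the smooth Lyapunov function now supplied by statement~\ref{ls:inclusion_GAS_4} of Theorem~\ref{thm:KL_stability}. Your proposal correctly identifies the starting point (the Teel--Praly function $U$) and correctly identifies the key auxiliary fact (the $V_P$-based a~priori bound $\sqrt{V_P(x(t))} \leq \sqrt{V_P(x(0))} + c\int_0^t\|v\|$, which follows from~\eqref{eq:inclusion_iss_p1} with $u=v$ and $\langle w,Cz\rangle\geq 0$). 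You are also right that the obstacle is the growth of $\|\nabla U\|$, and right that $\gamma$ is the natural object to control.

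However, your primary route --- manufacturing a single proper IISS Lyapunov function $V=h\circ U$ --- has a genuine gap that you yourself flag but do not close: $h(s)=\int_0^s\rd\tau/(1+\gamma(\tau))$ is proper only if $\gamma$ grows at most linearly, and nothing in the Teel--Praly construction controls the growth of $\|\nabla U\|$ on level sets of $U$. If $h$ is bounded then $V=h\circ U$ cannot be sandwiched by $\sK_\infty$ functions, and Lemma~\ref{lem:inclusion_ISS_lyapunov}\ref{ls:IISS_lyapunov} is inapplicable. Your two fallbacks do not repair this: \emph{precomposing} $U$ with a $\sK_\infty$ function to tame $\nabla U$ faces the identical properness/boundedness trade-off (a $\rho'$ small enough to damp $\|\nabla U\|$ will typically render $\rho$ bounded), and there is no ``additional regularity'' from \cite{teel2000smooth} that bounds the gradient growth. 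The correct completion is a different strategy from ``one reshaped proper Lyapunov function'': one uses the $V_P$ a~priori bound to confine each trajectory to a compact set determined by $\|x(0)\|+\|v\|_{L^1}$, then uses the decay inequality $\langle\nabla U(z),\cdot\rangle\leq -U(z)$ together with a \emph{local} Lipschitz bound on $\nabla U$ valid on that compact set to derive the IISS estimate directly, with careful bookkeeping to produce global comparison functions --- this is precisely the content of \cite[Lemma 3.4]{MR2599348} (there the Lyapunov function and the a~priori estimate are combined at the level of trajectories, not assembled into one global Lyapunov function). So: right ingredients, right diagnosis of the obstruction, but the proposed repairs would fail; the argument needs to switch from ``construct a single proper IISS Lyapunov function'' to the two-stage trajectory argument invoked by the paper.
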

Observe that the hypotheses of Theorem~\ref{thm:iiss_inclusion} are the same as those of Theorem~\ref{thm:KL_stability}. In other words, conditions which are sufficient for robust~$\sK\sL$-stability of the unforced Lur'e inclusion~$(A,B,C,F_0)$  are additionally sufficient for the {\em a priori} stronger stability property of IISS of the forced version. The proof of Theorem~\ref{thm:iiss_inclusion} is the same as that of~\cite[Theorem 2.5]{gilmore2021incremental}, which itself is based on a combination of arguments from~\cite[Proposition 3.10]{MR4023134} and~\cite[Lemma 3.4]{MR2599348}, only replacing references to~\cite[Theorem 2.4]{gilmore2021incremental} by references to Theorem~\ref{thm:KL_stability}. Therefore, we omit the details.

%
%
The results so far have focussed on the role of the correspondence in the Lur'e inclusion in ensuring various stability notions. Our next result demonstrates that the assumptions on the correspondence may be weakened if, instead, stronger assumptions are imposed on the linear components of~\eqref{eq:lure_inclusion}.

\begin{proposition}\label{prop:inclusion_exp_ISS}
 Consider the Lur'e inclusion~$(A,B,C,H_0)$, where~$H_0$ is given by
 \begin{equation}\label{eq:F_passive_inclusion'}
 H_0(y) := \big \{ w \in \mR^m \: : \: \langle w, y \rangle \geq 0\big\}\,.
 \end{equation}
 If there exist symmetric positive definite~$P \in \mR^{n \times n}$ and~$\eps >0$ such that
\begin{equation}\label{eq:P_lmi_condition'}
\bpm{A^\top P + PA & PB - C^\top \\ B^\top P - C & 0 } \leq - \eps \bpm{I & 0 \\ 0 & 0}\,,
\end{equation}
then~\eqref{eq:lure_inclusion} is exponentially ISS. The exponential ISS constants depend only on the model data~$(A,B,C)$. 
\end{proposition}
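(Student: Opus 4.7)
The plan is to construct a quadratic exponential-ISS Lyapunov function directly from the given $P$ and then invoke the quadratic refinement in Lemma~\ref{lem:inclusion_ISS_lyapunov}. Specifically, take $V_P(z) := \langle Pz, z\rangle$ and, repeating the algebraic manipulation at the start of the proof of Theorem~\ref{thm:iss_inclusion}, verify that for every $(u,z) \in \mR^m \times \mR^n$ and every $w \in H_0(Cz)$,
\begin{equation*}
\langle (\nabla V_P)(z), Az - B(w-u)\rangle = \left\langle \bpm{A^\top P + PA & PB - C^\top \\ B^\top P - C & 0}\bpm{z \\ u-w}, \bpm{z \\ u-w}\right\rangle + 2\langle Cz, u-w\rangle\,.
\end{equation*}
The strict LMI~\eqref{eq:P_lmi_condition'} then bounds the first term above by $-\eps\|z\|^2$.

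Next, the defining inequality $\langle w, Cz\rangle \geq 0$ of $H_0$ makes $-2\langle Cz, w\rangle$ non-positive and hence discardable, while $2\langle Cz, u\rangle$ is absorbed via Young's inequality into $(\eps/2)\|z\|^2 + (2\|C\|^2/\eps)\|u\|^2$. The net estimate is
\begin{equation*}
\langle (\nabla V_P)(z), Az - B(w-u)\rangle \leq -\frac{\eps}{2}\|z\|^2 + \frac{2\|C\|^2}{\eps}\|u\|^2\,.
\end{equation*}
Since $P$ is positive definite, $V_P$ is sandwiched between quadratic functions of $\|z\|$, so all four comparison functions required by Lemma~\ref{lem:inclusion_ISS_lyapunov} may be taken quadratic; the third statement of that lemma delivers exponential ISS with constants depending only on $P$, $\eps$ and $\|C\|$, hence only on the model data $(A,B,C)$.

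The one point requiring care is that $H_0(y)$ is an unbounded half-space rather than a compact, convex set, so $H_0$ does not satisfy the blanket assumptions on $F$ imposed in Section~\ref{sec:preliminaries} and fact~\ref{ls:F} does not apply verbatim. However, the pointwise Lyapunov inequality above holds for every admissible $w \in H_0(Cz)$, so any locally defined solution automatically remains bounded on its maximal interval of existence and therefore extends to a global one by the standard argument underlying the proof of Lemma~\ref{lem:inclusion_ISS_lyapunov}. The main obstacle is thus notational rather than mathematical: the substantive content of the proof is the single three-line Lyapunov computation above, with the strictness in~\eqref{eq:P_lmi_condition'} being precisely what allows the weakening of the sector hypothesis on the correspondence from~\eqref{eq:F_passive_inclusion_0} to~\eqref{eq:F_passive_inclusion'}.
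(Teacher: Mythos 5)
Your proof is correct and matches the paper's argument essentially line for line: form $V_P(z)=\langle Pz,z\rangle$, apply the algebraic decomposition from the proof of Theorem~\ref{thm:iss_inclusion}, use the strict LMI~\eqref{eq:P_lmi_condition'} and the sign condition $\langle w,Cz\rangle\geq 0$ to land on $\leq -\eps\|z\|^2 + 2\langle u,Cz\rangle$, absorb the cross term via Young, and invoke part (3) of Lemma~\ref{lem:inclusion_ISS_lyapunov}. The paper's proof is the same, only compressing the Young's inequality step into the phrase ``by a standard quadratic inequality''. One small caveat: your closing remark that boundedness of local solutions lets them ``extend to a global one by the standard argument underlying... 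Lemma~\ref{lem:inclusion_ISS_lyapunov}'' is not actually justified here, since that argument rests on fact~\ref{ls:F} (which requires compact-valued $F$, a hypothesis $H_0$ does not satisfy); the paper sidesteps this entirely in the text preceding the proposition, declining to address existence for $(A,B,C,H_0)$ and noting that the relevant trajectories arise from the Lur'e equations of Section~\ref{sec:incremental} --- the ISS estimate in~\eqref{eq:exp_iss_inclusion} is simply asserted for whatever elements of $\sB_{\rm inc}^\infty$ exist, so no extension argument is needed.
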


Note that the correspondence~$H_0$ in~\eqref{eq:F_passive_inclusion'} is not compact valued. We do not focus on existence of solutions to the Lur'e inclusion~$(A,B,C,H_0)$ --- rather we shall see that solutions arise naturally when considering certain Lur'e differential equations in Section~\ref{sec:incremental}. By way of comparison of~$H_0$ and~$F_0$ in~\eqref{eq:F_passive_inclusion_0}, we see that~$H_0$ satisfies condition~\eqref{eq:F_sector_1} with~$\alpha = 0$, rather than with~$\alpha \in \sP$ for~$F_0$, and conditions~\eqref{eq:F_sector_0} and~\eqref{eq:F_sector_2} are absent from the definition of~$H_0$. In terms of uniformity, the exponential ISS constants evidently also depend on~$P$ and~$\eps$, but these are determined (only) by the linear data~$(A,B,C)$ via~\eqref{eq:P_lmi_condition'}.

Observe that condition~\eqref{eq:P_lmi_condition'} enforces 
\[ A^\top P + PA \leq -\eps I\,,\]
so that~$A$ is necessarily Hurwitz since~$P$ is assumed positive definite. Continuing the discussion from after Theorem~\ref{thm:iiss_inclusion} on the existence of solutions to~\eqref{eq:P_lmi_condition}, we comment here that if~$(A,B,C)$ is controllable and observable, and positive~$\eps>0$ is such that~$\bG_\eps := \bG(\cdot - \eps)$ is positive real (in other words,~$\bG$ is strictly positive real in the terminology of~\cite{guiver2017transfer}), then a positive definite solution~$P = P^\top$ to the LMI~\eqref{eq:P_lmi_condition'} again follows from the Positive Real Lemma, now applied to the triple~$(A+\eps I, B,C)$.
\begin{proof}[Proof of Proposition~\ref{prop:inclusion_exp_ISS}]
Define, as usual,~$V_P(z) : = \langle z, P z\rangle$ for all~$z \in \mR^n$, which satisfies
\begin{equation}\label{eq:inclusion_exp_iss_p1}
    c_1 \| z \|^2 \leq V_P(z) \leq c_2 \| z\|^2 \quad \forall \: z \in \mR^n\,,
\end{equation} 
for some~$c_1, c_2 >0$. 
The argumentation leading to inequality~\eqref{eq:inclusion_iss_p1} now becomes
    \begin{align}
\langle (\nabla V_P)(z), Az - B(w - u)\rangle & = 2 \langle Pz, Az - B(w - u)\rangle \notag \\
& = \left \langle \bpm{A^\top P + PA & PB - C^\top \\ B^\top P - C & 0} \bpm{z \\ u-w}, \bpm{z \\ u-w} \right \rangle 
+ 2\langle u-w, C z \rangle \notag \\
& \leq -\eps \| z \|^2 + 2 \langle u, C z \rangle \notag \\
& \leq -\eps_0 \| z \|^2 + \eps_1 \| u\|^2 \quad \forall \: w \in H_0(Cz), \; \forall \: (u,z) \in \mR^m \times \mR^n\,, \label{eq:inclusion_exp_iss_p2}
\end{align}
for some~$\eps_0, \eps_1 >0$, by a standard quadratic inequality. In light of the inequalities in~\eqref{eq:inclusion_exp_iss_p1} and~\eqref{eq:inclusion_exp_iss_p2}, the claimed exponential ISS property follows from Lemma~\ref{lem:inclusion_ISS_lyapunov}.
\end{proof}

\section{A passivity theorem for semi-global incremental stability of forced Lur'e differential equations}\label{sec:incremental}

Here we apply the results of Section~\ref{sec:inclusion} to derive semi-global incremental stability properties of the following system of forced Lur'e differential equations
\begin{equation}\label{eq:lure}
\dot x(t)=Ax(t)-Bf(t,Cx(t))+Bv(t) \quad t \geq 0\,,
\end{equation}
where~$(A,B,C)\in\mL$ and the time-varying nonlinearity~$f:\mR_+\times\mR^m\to\mR^m$ is assumed to have the following properties: 
\begin{itemize}
    \item the function~$t\mapsto f(t,y)$ is in~$L^1_{\rm loc}(\mR_+,\mR^m)$ for all~$y \in \mR^m$ (in particular,~$f$ is measurable in its first variable, for each fixed second variable);
    \item for every compact set~$\Gamma\subset\mR^m$, there exists~$\lambda\in L^1_{\rm loc}(\mR_+,\mR_+)$ such that
$\|f(t,y_1)-f(t,y_2)\|\leq\lambda(t)\|y_1-y_2\|$ for all~$y_1,y_2\in\Gamma$ and all~$t\geq 0$,
and;
    \item~$t \mapsto f(t,y_0)$ is essentially bounded for some~$y_0 \in \mR^m$.
\end{itemize}
 The requirement that~$t \mapsto f(t,y_0)$ is essentially bounded at a single point~$y_0$ is mild, as typically~$f(t, 0 ) =0$ for almost all~$t \in \mR_+$, which satisfies this assumption. Observe that when~$f$ is independent of its first variable, then the above requirements are trivially satisfied if~$f$ is locally Lipschitz. We will sometimes refer to~\eqref{eq:lure} as the Lur'e equation~$(A,B,C,f)$.

Let~$v\in L^1_{\rm loc}(\mR_+,\mR^m)$ be given and let~$0<\tau\leq\infty$. A function~$x\in W^{1,1}_{\rm loc}([0,\tau),\mR^n)$ is said to be a solution of~\eqref{eq:lure} on~$[0,\tau)$ if~\eqref{eq:lure} holds for almost every~$t\in[0,\tau)$. The assumptions on~$f$ are known to sufficient for the existence of unique solutions of the initial value problems associated with~\eqref{eq:lure}. A solution of~\eqref{eq:lure} on~$[0,\infty)=\mR_+$ is called a global solution.

We define two behaviours of~\eqref{eq:lure} by
\begin{align*}
  \sB &:= \big\{(v,x)\in L^1_{\rm loc}(\mR_+,\mR^m)\times
    W^{1,1}_{\rm loc}(\mR_+,\mR^n)\; : \; \text{$(v,x)$ satisfies~\eqref{eq:lure}
a.e. on~$\mR_+$}\big\} \\
\text{and} \quad \sB^\infty &:=\big\{ (v,x)\in  \sB_{\rm inc}  \; : \; v\in L^\infty_{\rm loc}(\mR_+,\mR^m) \big\}\,.
\end{align*}
%
%
The following two results are corollaries of the ISS results in Section~\ref{sec:inclusion} for the Lur'e inclusion~\eqref{eq:lure_inclusion}. The first and second place stronger assumptions on the nonlinear and linear data in~\eqref{eq:lure}, respectively. As we shall see, for practically interesting examples to fall within the scope of our development, we must impose ``semi-global'' incremental hypotheses, and obtain ``semi-global'' results. 

We record the following assumptions, noting that we need not impose every hypothesis simultaneously:
\begin{enumerate}[label = {\rm \bfseries (A\arabic*)}]
    \item \label{ls:A1} For every compact set~$\Gamma \subset \mR^m$, there exists~$\theta_\Gamma \in \sK_\infty$ such that
 \[
 \sup_{t \geq 0} \| f(t,y+z) - f(t,z) \| \leq \theta_\Gamma(\|y\|) \quad \forall \: y \in \mR^m, \; \forall \: z \in \Gamma\,. 
 \]
    %
    \item \label{ls:A2} For every compact set~$\Gamma \subset \mR^m$, there exists~$\alpha_\Gamma \in \sP$ such that 
    \begin{equation}\label{eq:f_sector_1'}
\| y \| \alpha_\Gamma(\| y \|) \leq \inf_{t \geq 0} \langle y, f(t,y + z) - f(t,z) \rangle \quad \forall \: y \in \mR^m, \; \forall \: z \in \Gamma \,. 
\end{equation}

    %
    \item \label{ls:A3} Hypothesis~\ref{ls:A2} holds with~$\alpha_\Gamma \in \sK_\infty$.
\item \label{ls:A4} For every compact set~$\Gamma \subset \mR^m$, there exist~$\mu_\Gamma, c_\Gamma \geq 0$ with~$\mu_\Gamma c_\Gamma \geq 1$, and such that
\begin{align*}
 \| f(t,y+z) - f(t,z) \| \leq c_\Gamma \langle y , &\,  f(t,y+z) - f(t,z) \rangle \notag \\
 & \quad \quad \forall \: y \in \mR^m, \; \| y \| > \mu_\Gamma, \; \forall \: z \in \Gamma, \; \forall \: t \geq 0\,. 
\end{align*}
\item \label{ls:A5} Hypothesis~\ref{ls:A2} holds with~$\alpha_\Gamma(s) \geq \eps_\Gamma s$, for some~$\eps_\Gamma >0$. 
\end{enumerate}

In light of our standing assumption that~$t \mapsto f(t,y_0)$ is essentially bounded for some~$y_0 \in \mR^m$, an immediate consequence of~\ref{ls:A1} is that the function~$t \mapsto f(t,y)$ is essentially bounded for every~$y \in \mR^m$. Indeed, let~$y \in \mR^m$ and define~$\Gamma := \{y_0\}$. Then, invoking~\ref{ls:A1}, we have for almost all~$t \geq0$,
\[ \| f(t,y) \| \leq \| f(t,(y - y_0)+ y_0) - f(t,y_0) \| + \| f(t,y_0)\|  \leq \theta_0(\|y-y_0\|) + \| f(t,y_0)\|\,, \]
the right-hand side of which is essentially bounded.

As with~\eqref{eq:F_sector_1} and~\eqref{eq:F_sector_2}, if~$m=1$ and~\eqref{eq:f_sector_1'} holds, then so does~\ref{ls:A4} with~$\mu_\Gamma = c_\Gamma =1$.

%
%
\begin{remark}\label{rem:technical}
Since there is no loss of generality in doing so, in the sequel whenever~\ref{ls:A1} and~\ref{ls:A2} are assumed to hold, the resulting~$\theta_\Gamma$ and~$\alpha_\Gamma$ are assumed to satisfy~$\alpha_\Gamma \leq \theta_\Gamma$ and~\eqref{eq:technical}, see the discussion after the statement of Theorem~\ref{thm:KL_stability}. 
\end{remark}

%
%
The next lemma provides sufficient conditions for hypotheses~\ref{ls:A2} and~\ref{ls:A3} in the simpler setting that~$f$ is independent of its first variable. The proof  appears in the Appendix.

\begin{lemma}\label{lem:sg_technical_preparation}
Let~$f(t,z) = f(z)$ as in~\eqref{eq:lure} be a function of its second variable only, and assume that~$f$ is locally Lipschitz. Let~$\Gamma \subset \mR^m$ be a compact set. The following statements hold. 
\begin{enumerate}[label = {\rm (\roman*)}]

    \item \label{ls:sg_technical_preparation_i} 
   The conditions
   \begin{equation}\label{eq:A2_sufficient}
       \left. \begin{aligned}
    \inf_{z \in \Gamma} \frac{\big \langle y, f(y + z) - f(z) \big\rangle }{\|y\|} &>0 \quad  \forall \: y \in \mR^m \backslash \{0\}\,, \;\\
     \text{and} \quad \lim_{\| y \|\searrow 0}\inf_{z \in \Gamma} \frac{\big \langle y, f(y + z) - f(z) \big\rangle }{\|y\|} &=0\,,
    \end{aligned}\right\}
   \end{equation}
    are together sufficient for hypothesis~\ref{ls:A2}.
\item \label{ls:sg_technical_preparation_ii} The conditions~\eqref{eq:A2_sufficient} combined with the existence of~$z_0 \in \mR^m$ such that
   \begin{equation}\label{eq:A3_sufficient}
   \frac{\big \langle y, f(y+z_0) - f(z_0) \big\rangle }{\| y \|} \to\infty \quad \text{as~$\|y \| \to\infty$,}
   \end{equation}
are together sufficient for hypothesis~\ref{ls:A3}.
\end{enumerate}
\end{lemma}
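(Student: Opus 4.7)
The central object in both parts is the continuous function
\[ \phi(y,z) := \frac{\langle y, f(y+z) - f(z)\rangle}{\|y\|}, \quad y \in \mR^m \setminus \{0\}, \; z \in \mR^m, \]
well-defined and jointly continuous on its domain by local Lipschitz continuity of $f$.

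For \ref{ls:sg_technical_preparation_i}, I would set
\[ h(s) := \min\{\phi(y,z) : \|y\| = s,\, z \in \Gamma\} \quad \text{for } s > 0, \qquad h(0) := 0, \]
noting that the minimum is attained because $\{y : \|y\| = s\} \times \Gamma$ is compact and $\phi$ is continuous. A standard Berge-type argument (the constraint sphere varies continuously in the Hausdorff metric with $s$) gives continuity of $h$ on $(0,\infty)$; the second condition of \eqref{eq:A2_sufficient} gives $h(s) \to 0$ as $s \searrow 0$, extending continuity to $\mR_+$; and the first condition of \eqref{eq:A2_sufficient} gives $h(s) > 0$ for $s > 0$. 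Hence $h \in \sP$, and setting $\alpha_\Gamma := h$ yields \eqref{eq:f_sector_1'} directly by multiplying through by $\|y\|$.

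For \ref{ls:sg_technical_preparation_ii}, keeping $h$ as above, the task is to produce $\alpha_\Gamma \in \sK_\infty$ with $\alpha_\Gamma \leq h$. Since $\alpha_\Gamma \in \sK_\infty$ requires $\alpha_\Gamma(s) \to \infty$, the proof splits into two substeps:
\begin{enumerate}[label=(\alph*)]
\item show that $h(s) \to \infty$ as $s \to \infty$;
\item from such an $h \in \sP$, construct a continuous, strictly increasing, unbounded minorant.
\end{enumerate}
Substep (b) is routine: set $\tilde h(s) := \inf_{\sigma \geq s} h(\sigma)$, which is non-decreasing, continuous, majorized by $h$, and $\tilde h(s) \to \infty$; then take $\alpha_\Gamma(s) := \min\{\tilde h(s), \, s\} + \varepsilon\, s/(1+s) \cdot \min\{1, \tilde h(s)\}$ (or any similar standard smoothing) to acquire strict monotonicity while remaining dominated by $h$, invoking the smoothing results cited after the $\sP$-function discussion in Section~\ref{sec:preliminaries} if continuous differentiability is desired.

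The main obstacle is therefore substep~(a). My plan is to argue by contradiction: suppose $s_n \to \infty$ and $(y_n, z_n)$ with $\|y_n\| = s_n$, $z_n \in \Gamma$, and $\phi(y_n, z_n) \leq C$. Extracting a subsequence (compactness of $\Gamma$ and of the unit sphere), assume $z_n \to z^* \in \Gamma$ and $y_n/\|y_n\| \to u^*$. The key decomposition is
\[ \phi(y_n, z_n) = \phi(y_n, z_0) + \frac{\langle y_n, f(y_n + z_n) - f(y_n + z_0)\rangle}{\|y_n\|} + \frac{\langle y_n, f(z_0) - f(z_n)\rangle}{\|y_n\|}, \]
where \eqref{eq:A3_sufficient} forces $\phi(y_n, z_0) \to \infty$ and the third term is bounded by $\sup_{z \in \Gamma} \|f(z_0) - f(z)\| < \infty$. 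The delicate term is the middle one: local Lipschitzness of $f$ does not give a uniform bound on $\|f(y_n + z_n) - f(y_n + z_0)\|$ since the base point $y_n + z_0$ escapes to infinity. I would attempt to tame this term by exploiting the \emph{monotonicity-type} consequence of the first condition of \eqref{eq:A2_sufficient}, which holds uniformly for all $z \in \Gamma$, and by a careful sign analysis relative to the direction $u^*$; this structural combination, rather than any Lipschitz bound, is what I expect to force the needed uniform divergence and deliver the contradiction.
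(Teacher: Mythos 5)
Your part (i) is essentially identical to the paper's argument: the paper defines $g(y):=\inf_{z\in\Gamma}\langle y,f(y+z)-f(z)\rangle/\|y\|$ (with $g(0)=0$), notes continuity and positivity from the hypotheses, and then sets $\alpha(s):=\inf_{\|y\|=s}g(y)$, which is precisely your $h$.

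Part (ii) contains a genuine gap, and moreover your route diverges from the paper's in a way that makes the gap hard to close. You decompose
\[
\phi(y_n,z_n)=\phi(y_n,z_0)+\frac{\langle y_n,\,f(y_n+z_n)-f(y_n+z_0)\rangle}{\|y_n\|}+\frac{\langle y_n,\,f(z_0)-f(z_n)\rangle}{\|y_n\|}
\]
and correctly flag the middle term as delicate, but the proposal to ``tame'' it via the monotonicity consequence of~\eqref{eq:A2_sufficient} and a sign analysis does not work. The hypotheses in~\eqref{eq:A2_sufficient} only constrain increments of $f$ whose \emph{base point} lies in $\Gamma$, whereas the middle term is an increment based at $y_n+z_0$, which escapes to infinity; they carry no information about it. Worse, for the superlinear nonlinearities the paper is designed to cover (e.g.\ $f(z)=z|z|^d$ as in Example~\ref{ex:power}), the mean value theorem gives $\|f(y_n+z_n)-f(y_n+z_0)\|\gtrsim \|y_n\|^{d}\|z_n-z_0\|$, so the middle term can itself be of order $\|y_n\|^{d}$ with an uncontrolled sign, and can therefore cancel the divergence of $\phi(y_n,z_0)$. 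No sign argument rescues this.

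The paper avoids the middle term entirely by a different change of variables: set $\zeta:=y+z-z_0$, so that $\zeta+z_0=y+z$ and hence $f(y+z)=f(\zeta+z_0)$ exactly. This gives the two-term decomposition
\[
f(y+z)-f(z)=\big(f(\zeta+z_0)-f(z_0)\big)+\big(f(z_0)-f(z)\big),
\]
in which the second piece is bounded ($z$, $z_0$ fixed, so $\|f(z_0)-f(z)\|$ is a constant) and the first piece feeds directly into~\eqref{eq:A3_sufficient} evaluated at $\zeta$, with $\|\zeta\|\to\infty$ if and only if $\|y\|\to\infty$. Having established~\eqref{eq:A3_sufficient} for every $z\in\mR^m$, the paper then refers the remaining construction of a $\sK_\infty$ minorant to an existing lemma rather than arguing it from scratch. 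Your step (b) is fine in spirit, but the essential missing idea in your step (a) is this shift $\zeta=y+z-z_0$, which matches $y+z$ with $\zeta+z_0$ and so removes the problematic cross term before it ever appears.
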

The following theorem contains sufficient conditions for a range of semi-global incremental stability properties for the Lur'e equation~\eqref{eq:lure}, and is the main result of this section.
%
%
\begin{theorem}\label{thm:incremental_passive_SG}
    Consider the Lur'e equation~\eqref{eq:lure}, assume that the pair~$(C,A)$ is detectable, and that there exists a symmetric positive semi-definite~$P \in \mR^{n\times n}$ which satisfies~\eqref{eq:P_lmi_condition}. Fix~$R>0$. The following statements hold.
    \begin{enumerate}[label={\rm \bf (\arabic*)}, ref = {\rm \bf (\arabic*)}]
    \item \label{ls:SG_IISS} If~$P$ is positive definite, and~\ref{ls:A1} and~\ref{ls:A2} hold, then there exist~$\psi \in \sK\sL$,~$\phi \in \sK$ such that, for every~$(w,z) \in \sB$ with~$\| z\|_{L^\infty} \leq R$, 
\begin{equation}\label{eq:incremental_IISS_lure}
    \|(x-z)(t)\|\leq\psi\big(\|(x-z)(0)\|,t\big)+\phi\Big(\int_0^t\|(v-w)(s)\|{\rm d}s\Big)\quad\forall\:t\geq 0,\,\,\forall\:
(v,x)\in\sB\,.
\end{equation}
\item \label{ls:SG_ISS}  If~\ref{ls:A1}, \ref{ls:A3}, and~\ref{ls:A4} hold, then there exist~$\psi \in \sK\sL$,~$\phi \in \sK$ such that, for every~$(w,z) \in \sB^\infty$ with~$\| z(0)\| + \| w \|_{L^\infty} \leq R$, 
\begin{equation}\label{eq:incremental_ISS_lure} 
\|(x-z)(t)\|\leq\psi\big(\|(x-z)(0)\|,t\big)+\phi(\|v - w\|_{L^\infty(0,t)}) \quad\forall\:t\geq 0,\,\,\forall\:
(v,x)\in\sB^\infty\,.
\end{equation}
\item \label{ls:SG_exp_ISS} If~\ref{ls:A1}, \ref{ls:A4} and~\ref{ls:A5} hold, then there exist~$M, \gamma >0$ such that, for every~$(v_i,x_i) \in \sB^\infty$ with~$\| x_i(0)\| + \| v_i \|_{L^\infty} \leq R$, 
\begin{equation}\label{eq:lure_incrementally_exp_iss}
    \|(x_1-x_2)(t)\|\leq M \big( \e^{-\gamma t} \|(x_1-x_2)(0)\| + \|v_1 -v_2\|_{L^\infty(0,t)} \big) \quad\forall\:t\geq 0\,.
\end{equation} 
    \end{enumerate}
\end{theorem}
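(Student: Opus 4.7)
The strategy is to reduce statement~\ref{ls:SG_exp_ISS} to the semi-global exponential ISS result for Lur'e inclusions, Theorem~\ref{thm:iss_inclusion}\ref{ls:inclusion_sg_exp_ISS}, by writing the equation satisfied by the difference~$\tilde x := x_1 - x_2$ as a trajectory of a Lur'e differential inclusion driven by~$\tilde v := v_1 - v_2$. The essential step is to confine the ``anchor'' trajectory~$x_2$ (and symmetrically~$x_1$) to a compact set whose radius depends only on~$R$, so that only compact-$\Gamma$ versions of~\ref{ls:A1},~\ref{ls:A4},~\ref{ls:A5} are needed to describe the nonlinear increment.

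\textbf{Step 1: uniform boundedness of individual trajectories.} Fix~$R>0$ and let~$(v,x)\in\sB^\infty$ with~$\|x(0)\|+\|v\|_{L^\infty}\le R$. As noted after the statement of~\ref{ls:A1}, the standing essential boundedness of~$t\mapsto f(t,y_0)$ combined with~\ref{ls:A1} (with~$\Gamma=\{y_0\}$) yields essential boundedness of~$t\mapsto f(t,0)$. Setting~$\tilde f(t,y):=f(t,y)-f(t,0)$, the function~$\tilde f$ inherits~\ref{ls:A1},~\ref{ls:A4},~\ref{ls:A5} with~$\Gamma=\{0\}$ and satisfies~$\tilde f(t,0)=0$, so that~\eqref{eq:lure} rewrites as
\[ \dot x(t)=Ax(t)-B\tilde f(t,Cx(t))+B\bigl(v(t)-f(t,0)\bigr)\,, \]
realising~$x$ as a trajectory of a Lur'e inclusion of type~\eqref{eq:F_passive_inclusion}. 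Applying Theorem~\ref{thm:iss_inclusion}\ref{ls:inclusion_sg_exp_ISS} with bound~$R':=R+\|f(\cdot,0)\|_{L^\infty}$ produces~$b=b(R)>0$ such that~$x$ is global and~$\|x(t)\|\le b$ for all~$t\ge 0$.

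\textbf{Step 2: the inclusion for~$\tilde x$, and application of Theorem~\ref{thm:iss_inclusion}.} For~$(v_i,x_i)\in\sB^\infty$,~$i=1,2$, satisfying the bound, Step~1 gives~$\|x_i(t)\|\le b$ for all~$t\ge 0$. Set~$\Gamma:=\mathrm{cl}\,\mB(0,\|C\|b)$, a compact subset of~$\mR^m$ containing~$Cx_2(t)$ for all~$t$. Using~\ref{ls:A1},~\ref{ls:A4},~\ref{ls:A5} for this~$\Gamma$, construct a correspondence~$F$ of form~\eqref{eq:F_passive_inclusion} with linear~$\alpha(s):=\eps_\Gamma s$, a continuously differentiable~$\sK_\infty$ majorant~$\theta$ of~$\theta_\Gamma$ (chosen to also satisfy~\eqref{eq:technical}, cf.~Remark~\ref{rem:technical}), and with~$c:=c_\Gamma$,~$\mu:=\mu_\Gamma$; the inequality~$c\mu\ge 1$ comes from~\ref{ls:A4}. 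Writing~$\tilde x:=x_1-x_2$,~$\tilde v:=v_1-v_2$, the three conditions~\eqref{eq:F_sector} at~$y=C\tilde x(t)$ are precisely~\ref{ls:A1},~\ref{ls:A5} (weakened to the linear lower bound) and~\ref{ls:A4} with~$z=Cx_2(t)\in\Gamma$, so the nonlinear increment
\[ w(t):=f\bigl(t,C\tilde x(t)+Cx_2(t)\bigr)-f\bigl(t,Cx_2(t)\bigr) \]
satisfies~$w(t)\in F(C\tilde x(t))$ and hence~$(\tilde v,\tilde x)\in\sB_{\rm inc}^\infty(F)$. The matrix~$P$ and the detectability of~$(C,A)$ are inherited from the hypotheses of Theorem~\ref{thm:incremental_passive_SG}, so Theorem~\ref{thm:iss_inclusion}\ref{ls:inclusion_sg_exp_ISS} applied to the Lur'e inclusion~$(A,B,C,F)$ with bound~$2R$ (since~$\|\tilde x(0)\|+\|\tilde v\|_{L^\infty}\le 2R$) yields~$M,\gamma>0$ such that
\[ \|\tilde x(t)\|\le M\bigl(\e^{-\gamma t}\|\tilde x(0)\|+\|\tilde v\|_{L^\infty(0,t)}\bigr)\quad\forall\:t\ge 0\,, \]
which is exactly~\eqref{eq:lure_incrementally_exp_iss}.

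\textbf{Anticipated obstacle.} The conceptual reduction is natural, but the main technical care in Step~2 is the fabrication of a correspondence~$F$ meeting all the regularity conditions of~\eqref{eq:F_passive_inclusion} (in particular continuous differentiability of~$\theta$ and~\eqref{eq:technical}) from the merely-$\sK_\infty$ datum~$\theta_\Gamma$ of~\ref{ls:A1}, together with verifying each clause of~\eqref{eq:F_sector} for the increment~$w(t)$. The other delicate point is the bookkeeping inherent in the semi-global character: one must keep track of how the compact~$\Gamma$, and hence the resulting~$\eps_\Gamma,\mu_\Gamma,c_\Gamma,\theta_\Gamma$ and ultimately~$M,\gamma$, are determined by~$R$ through the uniformity statement at the end of Theorem~\ref{thm:iss_inclusion}.
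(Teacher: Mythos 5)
Your proposal proves only statement~\ref{ls:SG_exp_ISS} of the theorem; statements~\ref{ls:SG_IISS} and~\ref{ls:SG_ISS} are not addressed, so the proof is incomplete. Those parts rest on different machinery: statement~\ref{ls:SG_IISS} uses the inclusion~$F_0$ from~\eqref{eq:F_passive_inclusion_0} and the IISS result Theorem~\ref{thm:iiss_inclusion}, while statement~\ref{ls:SG_ISS} needs the ISS statement~\ref{ls:inclusion_ISS} of Theorem~\ref{thm:iss_inclusion}. Neither of these is touched by your argument, and they do not simply follow by a small variation of the exponential-ISS route, since the~$\sK_\infty$-gain and integral-gain cases require weaker hypotheses on~$\alpha_\Gamma$ (from~$\sP$ or~$\sK_\infty$) than the linear lower bound~\ref{ls:A5} you exploit.

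For statement~\ref{ls:SG_exp_ISS}, your argument is essentially the paper's, just unfolded more explicitly. The paper first proves statement~\ref{ls:SG_ISS} by: (i) realising a single trajectory as a trajectory of a Lur'e inclusion with~$\Gamma=\{0\}$ and invoking Theorem~\ref{thm:iss_inclusion}\ref{ls:inclusion_ISS} to get the bound~$b$; (ii) forming the difference~$\xi=x-z$, observing it lies in~$\sB_{\rm inc}(F)$ for~$F$ built from~$\Gamma=\mathrm{cl}\,\mB(0,b)$, and applying Theorem~\ref{thm:iss_inclusion} again. It then deduces statement~\ref{ls:SG_exp_ISS} from statement~\ref{ls:SG_ISS} together with Theorem~\ref{thm:iss_inclusion}\ref{ls:inclusion_sg_exp_ISS}. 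You instead prove~\ref{ls:SG_exp_ISS} directly, obtaining the uniform bound in Step~1 by applying Theorem~\ref{thm:iss_inclusion}\ref{ls:inclusion_sg_exp_ISS} rather than \ref{ls:inclusion_ISS}; that is a small and valid variation (under~\ref{ls:A5} the hypotheses of statement~\ref{ls:inclusion_sg_exp_ISS} are available with~$\Gamma=\{0\}$). Your bookkeeping in Step~2 is correct, including the bound~$2R$ on~$\|\tilde x(0)\|+\|\tilde v\|_{L^\infty}$. The ``anticipated obstacle'' about regularity of~$\theta_\Gamma$ is already handled by the paper's standing convention (see the Notation section and Remark~\ref{rem:technical}) that~$\sK_\infty$ upper bounds may be taken continuously differentiable and that~\eqref{eq:technical} can be arranged. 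To complete the proof you should supply the parallel arguments for statements~\ref{ls:SG_IISS} and~\ref{ls:SG_ISS}: form the difference~$\xi=x-z$, identify the appropriate correspondence (respectively~$F_0$ from~\eqref{eq:F_passive_inclusion_0} and~$F$ from~\eqref{eq:F_passive_inclusion}), and invoke Theorem~\ref{thm:iiss_inclusion} and Theorem~\ref{thm:iss_inclusion}\ref{ls:inclusion_ISS} in turn.
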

%
%
\begin{proof}
Let~$(w,z), (v,x)  \in \sB$ denote two trajectories of~\eqref{eq:lure}, and set~$\xi := x - z$. Forming the difference and invoking~\eqref{eq:lure} gives
\begin{align}
\dot \xi(t) - A \xi(t) - B(v - w)(t)& = - B\big(f(t,C\xi(t) + Cz(t)) - f(t,Cz(t)) \big)  \notag \\
& \in -BG(C\xi(t))\,, \label{eq:lure_difference}
\end{align}
that is,~$(v-w,\xi) \in \sB_{\rm inc}(G)$, where~$G = F_0$ as in~\eqref{eq:F_passive_inclusion_0} if~\ref{ls:A1} and~\ref{ls:A2} are imposed, or $G = F$ if~\ref{ls:A1}, \ref{ls:A1} and~\ref{ls:A4} are imposed.
Indeed, if~$f$ satisfies~\ref{ls:A1} and~\ref{ls:A2}, then (via~\eqref{eq:f_sector_1'}), we have that
\[ f(t,Cx(t)) - f(t,Cz(t)) = f(t,C\xi(t) + Cz(t)) - f(t,Cz(t)) \in F_0(C\xi(t)) \quad \forall \: t \geq 0\,.\]
With the above construction, statement~\ref{ls:SG_IISS} follows from Theorem~\ref{thm:iiss_inclusion}.

For statement~\ref{ls:SG_ISS}, assumptions~\ref{ls:A1}, \ref{ls:A3} and~\ref{ls:A4}, each with~$\Gamma := \{0\}$, respectively yield
\begin{equation}
\| f(t,y) - f(t,0) \| \leq \theta_0(\|y\|) \quad \forall \: y \in \mR^m, \; \forall \: t \geq 0\,, \label{eq:f_sector_0''}
\end{equation}
and
   \begin{equation}\label{eq:f_sector_1''}
\| y \| \alpha_0(\| y \|) \leq \langle y, f(t,y) - f(t,0) \rangle \quad \forall \: y \in \mR^m, \; \forall \: t \geq 0\,,
\end{equation}
and
\begin{align}
 \| f(t,y) - f(t,0) \| \leq c_0 \langle y , &\,  f(t,y) - f(t,0) \rangle \quad \forall \: y \in \mR^m, \; \| y \| > \mu_0, \; \forall \: t \geq 0\,,\label{eq:f_sector_2''}
\end{align}
for some~$\alpha_0, \theta_0 \in \sK_\infty$, and~$\mu_0, c_0  >0$.

Now if~$(w,z) \in \sB^\infty$, then
\begin{align*}
    \dot z(t) &= Az(t) - Bf(t,Cz(t) + Bw(t) = Az(t) - B\big(f(t,Cz(t)) - f(t,0)\big) + B(w(t) -f(t,0)) \notag \\
    & \in Az(t) - BF(Cz(t)) + B(w(t)-f(t,0))\,, 
\end{align*}
that is,~$(w- f(\cdot, 0), z) \in \sB_{\rm inc}^\infty(F)$. The Lur'e inclusion~$(A,B,C,F)$ is ISS by statement~\ref{ls:inclusion_ISS} of Theorem~\ref{thm:iss_inclusion}, where~$F$ is given by~\eqref{eq:F_passive_inclusion}. Here we have used that properties~\eqref{eq:f_sector_0''}--\eqref{eq:f_sector_2''} ensure the set membership
\[ f(t,Cz(t)) - f(t,0) \in F(Cz(t)) \quad \forall \: t \in \mR_+\,,\]
In particular, there exists~$b >0$ such that~$\| z \|_{L^\infty} \leq b$ for all~$(w,z)\in \sB^\infty$ with~$\|z(0)\| + \|w\|_{L^\infty} \leq R$. Here we have used that~$f(\cdot, 0)$ is essentially bounded. We now basically repeat the above argument with~$\Gamma := {\rm cl}\,\mB(0,b)$, to obtain new functions/constants~$\theta_\Gamma$,~$\alpha_\Gamma$,~$\mu_\Gamma$ such that~\eqref{eq:f_sector_0''}--\eqref{eq:f_sector_1''} above hold, and apply Theorem~\ref{thm:iss_inclusion} again.

Statement~\ref{ls:SG_exp_ISS} follows from statement~\ref{ls:SG_ISS} and statement~\ref{ls:inclusion_sg_exp_ISS} of Theorem~\ref{thm:iss_inclusion}.
\end{proof}

%
%
We next state a corollary of Proposition~\ref{prop:inclusion_exp_ISS} for the Lur'e equation~\eqref{eq:lure}. As with Proposition~\ref{prop:inclusion_exp_ISS}, the corollary shows that weaker monotonicity assumptions may be placed on the nonlinearity~$f$ if stronger assumptions are instead imposed on the linear data.

\begin{corollary}\label{cor:exp_ISS_lure}
  Consider the Lur'e equation~\eqref{eq:lure} and assume that
  \begin{equation}\label{eq:monotone_f}
      \langle y,  f(t,y+z) - f(t,z) \rangle \geq 0 \quad \forall \: t \geq0, \; \forall \: z,y \in \mR^m\,.
  \end{equation} 
 If there exist symmetric positive definite~$P \in \mR^{n \times n}$ and~$\eps >0$ such that~\eqref{eq:P_lmi_condition'} holds, then the Lur'e equation is incrementally exponentially ISS, that is, there exist~$M, \gamma >0$ such that~\eqref{eq:lure_incrementally_exp_iss} holds for all~$(v_i,x_i) \in \sB^\infty$.
\end{corollary}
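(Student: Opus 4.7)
The plan is to reduce the incremental statement to an application of Proposition~\ref{prop:inclusion_exp_ISS}, exactly in the same spirit as how Theorem~\ref{thm:incremental_passive_SG} is reduced to the inclusion results of Section~\ref{sec:inclusion}, but now using the correspondence~$H_0$ from~\eqref{eq:F_passive_inclusion'} in place of~$F$ or~$F_0$.

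Concretely, I would fix two trajectories~$(v_1,x_1),(v_2,x_2)\in\sB^\infty$, set~$\xi := x_1-x_2$ and~$u := v_1-v_2$, and form the difference of the two Lur'e equations to obtain, for almost every~$t\geq 0$,
\[
\dot\xi(t) - A\xi(t) - Bu(t) = -B\bigl(f(t,Cx_1(t)) - f(t,Cx_2(t))\bigr).
\]
The monotonicity hypothesis~\eqref{eq:monotone_f}, applied with the choices~$y = C\xi(t) = Cx_1(t)-Cx_2(t)$ and~$z = Cx_2(t)$, gives
\[
\bigl\langle C\xi(t),\, f(t,Cx_1(t)) - f(t,Cx_2(t))\bigr\rangle \geq 0 \quad \text{a.e.\ }t\geq 0,
\]
which is precisely the defining membership condition of~$H_0(C\xi(t))$ in~\eqref{eq:F_passive_inclusion'}. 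Consequently the pair~$(u,\xi)$ satisfies~$\dot\xi(t) - A\xi(t) - Bu(t) \in -B H_0(C\xi(t))$, that is,~$(u,\xi)\in\sB_{\rm inc}^\infty(H_0)$.

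The hypothesis~\eqref{eq:P_lmi_condition'} with a symmetric positive definite~$P$ is exactly what is required by Proposition~\ref{prop:inclusion_exp_ISS}, so that the Lur'e inclusion~$(A,B,C,H_0)$ is exponentially ISS with constants~$M,\gamma>0$ depending only on~$(A,B,C)$. Applying the exponential ISS estimate~\eqref{eq:exp_iss_inclusion} to the trajectory~$(u,\xi)$ then yields
\[
\|(x_1-x_2)(t)\| = \|\xi(t)\| \leq M\bigl(\e^{-\gamma t}\|\xi(0)\| + \|u\|_{L^\infty(0,t)}\bigr) = M\bigl(\e^{-\gamma t}\|(x_1-x_2)(0)\| + \|v_1-v_2\|_{L^\infty(0,t)}\bigr),
\]
which is the desired estimate~\eqref{eq:lure_incrementally_exp_iss} holding for arbitrary trajectories in~$\sB^\infty$ (hence globally, not merely semi-globally).

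There is essentially no obstacle: the only delicate point to verify is that the set-membership condition produced by the monotonicity assumption really does place the difference process into the behaviour set of the inclusion~$(A,B,C,H_0)$, which is immediate from the definition of~$H_0$ and does not require any of the boundedness or compactness properties that forced the semi-global restriction in Theorem~\ref{thm:incremental_passive_SG}. In particular, no sector bound on~$\|f(t,y+z)-f(t,z)\|$ is needed, which is why the strengthened linear hypothesis~\eqref{eq:P_lmi_condition'} allows one to dispense with assumptions of the type~\ref{ls:A1}, \ref{ls:A4} or~\ref{ls:A5} and still obtain a global conclusion.
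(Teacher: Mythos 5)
Your proof is correct and takes essentially the same route as the paper: form the difference of trajectories, use the monotonicity condition~\eqref{eq:monotone_f} to verify that the incremental nonlinearity lands in~$H_0(C\xi(t))$, and invoke Proposition~\ref{prop:inclusion_exp_ISS}. The additional remarks on why the conclusion is global rather than semi-global are accurate but not part of the paper's argument.
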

%
%
\begin{proof}
   Let~$(v_i,x_i) \in \sB$ for $i = 1,2$, and set~$x := x_1 - x_2$. Forming the difference and invoking~\eqref{eq:lure} gives
\begin{align*}
\dot x(t) - A x(t) - B(v_1 - v_2)(t) & =- B\big(f(t,C x(t) + Cx_2(t)) - f(t,Cx_2(t)) \big)  \notag \\
& \in - BH_0(Cx(t))\,,
\end{align*}
where~$H_0$ is as in~\eqref{eq:F_passive_inclusion'}. Here we have used~\eqref{eq:monotone_f} to ensure that
\[ f(t,Cx_1(t)) - f(t,Cx_2(t)) = f(t,Cx(t) + Cx_2(t)) - f(t,Cx_2(t)) \in H_0(Cx(t)) \quad \forall \: t \geq 0\,.\]
The claim now follows from Proposition~\ref{prop:inclusion_exp_ISS}.
\end{proof}

Some remarks on the hypotheses of Theorem~\ref{thm:incremental_passive_SG} and Corollary~\ref{cor:exp_ISS_lure} are in order. The assumptions on the linear components of the Lur'e equation~\eqref{eq:lure} are the same as those imposed for the Lur'e inclusion~\eqref{eq:lure_inclusion}, and the commentary after the statement of Theorem~\ref{thm:iss_inclusion} applies in this setting as well. 
%
%
Therefore, we focus attention on hypotheses~\ref{ls:A2}, \ref{ls:A3} and~\ref{ls:A5}. For which purpose, consider first the related inequality 
\[  \langle z_1 -z_2, f(t,z_1) - f(t,z_2) \rangle \geq 0 \quad \forall \: z_1, z_2 \in \mR, \; \forall \: t \in \mR_+\,,\]
which is simply monotonicity of~$f(t, \, \cdot)$; see for example~\cite[p.\ 97]{deimling2010nonlinear}. In the scalar ($m=1$) case, this is equivalent to the lower slope restriction condition
\[ 0 \leq \frac{f(t,z_1) - f(t,z_2) }{z_1 - z_2} \quad \forall \: z_1, z_2 \in \mR, \; z_1 \neq z_2, \; \forall \: t \in \mR_+\,,\]
and is also equivalent to~$\partial f/ \partial z \geq 0$ when~$f$ is differentiable with respect to its second variable. Slope restriction conditions are somewhat common in nonlinear control theory; see, for example~\cite{haddad1995absolute,turner2019analysis}. Roughly speaking, condition~\eqref{eq:f_sector_1'} with~$\alpha \in \sP$ or~$\sK_\infty$ is a strengthening\footnote{For instance, the lower bound in~\eqref{eq:f_sector_1'} is somewhere between the concepts of strictly- and strongly- monotone in the terminology of~\cite[p.97]{deimling2010nonlinear}.} of the monotonicity of~$f$ in its second variable, capturing a lower bound for the rate of increase of~$f$. The inequality in~\ref{ls:A5} is the same as that in the strong monotonicity concept of~\cite[p.97]{deimling2010nonlinear}. However, in each of these cases, note the asymmetry in~$y$ and~$z$ in~\eqref{eq:f_sector_1'}, as the lower bound is only required to hold for all~$z$ in a compact set~$\Gamma$.

%
%
\begin{remark}
    We remark that there are global versions of~\ref{ls:A2}--\ref{ls:A5}, which are {\em a forteriori} sufficient for these respective assumptions. Indeed, suppose that~$\alpha : \mR_+ \to \mR_+$ is such that
\begin{equation}\label{eq:f_sector_1}
\| z_1 -z_2 \| \alpha(\| z_1 -z_2 \|) \leq \langle z_1 -z_2, f(t,z_1) - f(t,z_2) \rangle \quad \forall \: z_1, z_2 \in \mR^m, \; \forall \: t \geq 0\,.
\end{equation}
Hypotheses~\ref{ls:A2}, \ref{ls:A3} or \ref{ls:A5} hold if~$\alpha \in \sP$,~$\alpha \in \sK_\infty$, or~$\alpha \in \sK_\infty$ with~$\alpha(s) \geq \nu s$ for some~$\nu > 0$, respectively.

Similarly, a sufficient condition for~\ref{ls:A4} is that there exist~$\mu, c \geq 0$ such that
\begin{align}
 \| f(t,z_1) - f(t,z_2) \| \leq c\langle z_1 -z_2, &\,  f(t,z_1) - f(t,z_2) \rangle \notag \\
 & \quad \forall \: z_1, z_2 \in \mR^m \quad \text{such that} \quad \| z_1 -z_2 \| > \mu, \; \forall \: t \geq 0\,. \label{eq:f_sector_2}
\end{align}
Consequently, whilst in principle a global version of Theorem~\ref{thm:incremental_passive_SG} may be derived, which imposes~\eqref{eq:f_sector_1}, \eqref{eq:f_sector_2} and the global version of hypothesis~\ref{ls:A1}, namely that there exists~$\theta \in \sK_\infty$ such that
\[ 
\sup_{t \geq 0} \| f(t,z_1) - f(t,z_2) \| \leq \theta(\| z_1 - z_2\|)  \quad \forall \: z_1, z_2 \in \mR^m\,,
\]
this final assumption is highly restrictive. Indeed, the right-hand side of the above is bounded on sets of the form~$\{ (z_1,z_2) \in \mR^m \times \mR^m \: : \: \| z_1 - z_2 \| = L\big\}$, for each fixed~$L>0$. However, the left-hand side of the above need not be bounded on such sets, particularly when~$\| z_1 \|$,~$\| z_2\| \to \infty$.  For example, the function~$f$ which appears in the next example does not satisfy the above condition. It is for this reason that we have concentrated on semi-global results.
\end{remark}

We present two examples of functions which satisfy hypotheses from~\ref{ls:A1} to~\ref{ls:A4}.
%
%
\begin{example}\label{ex:power}
   Consider the time-invariant nonlinearity~$f_d : \mR_+ \to \mR_+$ defined by
\begin{equation}\label{eq:f_power_d}
    f_d(z) : = a_0 z + a_1 \fpd{z}{d}      \quad \forall \: z \in \mR\,,
\end{equation}
for~$d \in \mN$,~$a_0 \geq 0$ and~$a_1 >0$. Functions of this form have been proposed as models of nonlinear viscous damping, such as in~\cite[equation (6)]{fusco2014robust}, which itself is based on the experimental law proposed in~\cite{morison1950force}.

We shall focus on~$g_d(z) : = \fpd{z}{d}$ first. We claim that~$g_d$ satisfies hypotheses~\ref{ls:A1}--\ref{ls:A4}. The derivations of these properties are all routine, but tedious, and several symmetry arguments may be used to reduce the number of calculations. First, since~$g_d$ is odd, it follows that
\begin{equation}\label{eq:f_symmetry_1}
    \lvert g_d(y+z) - g_d(z)  \rvert = \lvert g_d(-y-z) - g_d(-z)  \rvert \quad \forall \: y, z \in \mR\,,
\end{equation} 
and 
\[ \langle y , g_d(y+z) - g_d(z) \rangle = \langle (-y) , g_d(-y-z) - g_d(-z) \rangle \quad \forall \: y, z \in \mR\,.\]
Fix nonempty compact~$\Gamma \subseteq \mR$, and without loss of generality assume that~$z \in \Gamma$ implies that~$-z \in \Gamma$, otherwise replace~$\Gamma$ by the compact set~$\Gamma \cup (-\Gamma)$. 

To establish~\ref{ls:A1}, for~$y >0$ and~$z \in \Gamma$, we have that
\begin{align*}
    \frac{\lvert g_d(y+z) - g_d(z) \rvert}{\lvert y \rvert} & = \frac{g_d(y+z) - g_d(z)}{y} = g_d'(\xi) \quad \text{some~$\xi \in (z,z+y)$} \\
    & = (d+1) \lvert \xi \vert^{d} \leq (d+1) \max\{ \lvert z \rvert, \lvert y + z\rvert\}^d \leq (d+1)\big(\lvert y \rvert + \lvert z \rvert \big)^d\,,
\end{align*}
where we have invoked the mean value theorem for~$g_d$. From the above inequality, in conjunction with the symmetry property~\eqref{eq:f_symmetry_1}, we conclude that~\ref{ls:A1} holds with~$\theta_\Gamma$ a suitable polynomial of degree~$d+1$.

We next claim~\eqref{eq:f_sector_1} holds with~$\alpha \in \sK_\infty$, so that both hypotheses~\ref{ls:A2} and~\ref{ls:A3} hold {\em a forteriori}. For which purpose,  consider
\begin{align*}
    \langle z_1 -z_2, g_d(z_1) - g_d(z_2) \rangle = (z_1 - z_2) \big(g_d(z_1) - g_d(z_2)\big) =: h(z_1, z_2) \quad \forall \: z_1, z_2 \in \mR\,,
\end{align*}
which simplifies to
\[ h(z_1,z_2) = (z_1 - z_2)(\fpd{z_1}{d} - \fpd{z_2}{d})\,. \]
Observe that~$h$ is symmetric, in that~$h(z_1,z_2) = h(z_2,z_1)$ for all~$(z_1,z_2) \in \mR^2$. Since~$g_d$ is odd, it also follows that~$h(z_1,z_2) = h(-z_1,-z_2)$ for all~$(z_1,z_2) \in \mR^2$. In light of the symmetry of~$h$, for~$z_1, z_2 \geq 0$ we may without loss of generality assume that~$z_1 \geq z_2 \geq 0$. An induction argument shows that
\begin{align}
    h(z_1,z_2) &= (z_1 - z_2)(z_1^{d+1} - z_2^{d+1}) \geq (z_1 - z_2)^{d+2} \label{eq:x_estimate_1}\,,
\end{align} 
with the induction step following from the estimates
\[ (z_1 - z_2)^{m+1} = (z_1 - z_2)  \sum_{k=0}^m z_1^{m-k} z_2^{k} \geq  (z_1 - z_2) (z_1^{m} + z_2^m) \geq (z_1 - z_2)(z_1^m - z_2^m) \quad \forall \: m \in \mN\,.\]
Next, we have 
\[ h(z_1,z_2) = (z_1 - z_2)(z_1^{d+1} - (-1)^d z_2^{d+1}) \quad \forall \: z_2 \leq 0 \leq z_1\,.\]
If~$d$ is odd
\begin{align}
    h(z_1,z_2) &= (z_1 - z_2)(z_1^{d+1} + z_2^{d+1}) \gs  (z_1 - z_2)\max \big\{ z_1^{d+1}, z_2^{d+1}\big\} \notag \\
    & \gs (z_1 - z_2)\max\big\{ (z_1 - z_2)^{d+1}, (z_1 + z_2)^{d+1} \big\} = (z_1 - z_2)^{d+2}\,. \label{eq:x_estimate_2} 
\end{align}
Here $\gs$ means greater than or equal to a general multiplicative constant independent of other terms appearing. Its use is intended to reduce the number of constants introduced. If~$d$ is even, then
\begin{align}
    h(z_1,z_2) &= (z_1 - z_2)(z_1^{d+1} - z_2^{d+1}) = (z_1 - z_2)^2 \sum_{k=0}^d z_1^{d-k} z_2^{k} \notag \\
    & \gs (z_1 - z_2)^2(z_1^d - z_2^d)\,, \label{eq:x_estimate_3}
\end{align}
and now the case of~$d$ even applies. The conjunction of~\eqref{eq:x_estimate_1}--\eqref{eq:x_estimate_3} and the symmetry properties of~$h$ give that~\eqref{eq:f_sector_1} holds for~$g_d$ with~$\alpha_1(z) := c_d z^{d+1}$. 

It remains to verify~\ref{ls:A4}. Let~$y \geq 0$ and~$z \in \mR$. From~\ref{ls:A1} we have that
\[ \lvert g_d(y+z) - g_d(z)   \rvert \leq y \theta(y) \quad \text{--- a degree~$d+2$ polynomial in~$y$}\,.\] 
From~\eqref{eq:f_sector_1} with~$z_1 := y+z$ and~$z_2 := z$ we have
\[ \langle y, g_d(y+z) - g_d(z)\rangle \gs c_d y^{d+2} \quad \text{--- also a degree~$d+2$ polynomial in~$y$}\,. \] 
Therefore, since the degrees coincide, property~\ref{ls:A4} holds for some~$c_\Gamma, \mu_\Gamma >0$, as required. 

When~$a_0 >0$, the function~$f_d$ satisfies~\ref{ls:A1}--\ref{ls:A4} and, additionally, \ref{ls:A5} holds as well. \hfill~$\square$
\end{example}

The next example concerns multivariable nonlinearities, and shows the hypotheses~\ref{ls:A1}--\ref{ls:A5} hold for diagonal nonlinearities (as described below), if they hold for every component function.
%
%
\begin{example}\label{ex:diagonal}
Consider the nonlinearity~$f : \mR_+ \times \mR^m \to \mR^m$ for~$m>1$ which is diagonal in its second component, that is,~$f$ satisfies
\begin{equation}\label{eq:diagonal}
 \big(f(t,z)\big)_i = f_i(t, z_i) \quad \forall \: z \in \mR^m, \; \forall \: t \geq 0, \; \forall \: i \in \{1,2,\dots,m\}\,,
\end{equation} 
for given component functions~$f_i : \mR_+ \times \mR \to \mR$. We claim that if~$f_i$ satisfy~\ref{ls:A1}--\ref{ls:A5} for every~$i \in \{1,2,\dots,m\}$, then so does~$f$. 

For which purpose, let~$\Gamma \subset \mR^m$ be compact and let~$P_i : \mR^m \to \mR$ be defined by~$P_i z = z_i$ for all~$z \in \mR^m$ which is trivially continuous. Therefore,~$P_i \Gamma \subseteq \mR$ is compact. To verify~\ref{ls:A1}, we estimate that for~$y \in \mR^m$ and~$z \in \Gamma$,
\begin{align*}
    \| f(t,y+z) - f(t,z) \| &\leq \| f(t,y+z) - f(t,z) \|_1 = \sum_{i=1}^m \big \lvert f_i(t,y_i+z_i) - f_i(t,z_i) \big\rvert \\
    & \leq \theta_{P_i \Gamma} \big( \lvert y_i \rvert \big) \leq \max_i \theta_{P_i \Gamma}(\| y \|) =: \theta_\Gamma(\|y\|)\,,
\end{align*}
since~\ref{ls:A1} holds for each component function~$f_i$, for some function~$\theta_{P_i \Gamma} \in \sK_\infty$. Here~$\| \cdot \|_1$ denotes the one-norm, and we have used that the maximum of~$m$ functions each in~$\sK_\infty$ belongs to~$\sK_\infty$. 

For~\ref{ls:A2}, by hypothesis there exist~$\alpha_{P_i \Gamma} \in \sP$ such that, for each~$i \in \{1,2,\dots,m\}$
\[ y_i \big( f_i(t,y_i + z_i) - f_i(t,z_i)\big) \geq \lvert y_i \rvert \alpha_{P_i \Gamma}(\lvert y_i \rvert) \quad \forall \: y_i \in \mR, \; \forall \: z_i \in P_i \Gamma, \; \forall \: t \geq 0.\]
Set~$\alpha_\Gamma := \min_i \alpha_{P_i \Gamma} \in \sP$, since the~$\alpha_{P_i \Gamma} \in\sP$. Therefore, for~$y \in \mR$,~$z \in \Gamma$, we estimate that
\begin{align*}
 \big \langle y, f(t,y + z) - f(t,z) \big\rangle & = \sum_{i=1}^m y_i \big( f_i(t,y_i + z_i) - f_i(t,z_i)\big) \geq \sum_{i=1}^m \lvert y_i \rvert \alpha_{P_i \Gamma}(\lvert y_i \rvert)  \\ 
 & \geq \| y\|_\infty \alpha_{\Gamma}( \|y\|_\infty) \geq \frac{1}{\sqrt{m}} \| y\|\alpha_{\Gamma}( \|y\|/\sqrt{m}) \quad \forall \: t \geq 0\,.
\end{align*}
where we have used that~$\|y\|\leq \sqrt{m} \|y \|_\infty$. By absorbing the positive constants into~$\alpha_\Gamma$ and relabelling, we infer that~\ref{ls:A2} holds. The argument for~\ref{ls:A3} is the same as above, and only uses that~$\alpha_\Gamma \in \sK_\infty$ when~$\alpha_{P_i \Gamma} \in \sK_\infty$.

 The derivations of~\ref{ls:A4} and~\ref{ls:A5} are similar, and are hence omitted for brevity. \hfill~$\square$
\end{example}

\section{Consequences of incremental stability --- response to almost periodic forcing terms}\label{sec:ap}

Here we demonstrate how the incremental stability results of Section~\ref{sec:incremental} facilitate corresponding desirable behaviour of the state of the forced Lur'e equation~\eqref{eq:lure} with respect to almost periodic forcing signals. The results of this section are inspired by those appearing in~\cite[Section 4]{gilmore2021incremental} and, to an extent~\cite[Sections 3 and 4]{gilmore2020infinite} and which trace their roots back to arguments used in~\cite{angeli2002lyapunov}. The key difference is that here the crucial incremental stability properties are ensured by the passivity-type theorems of Section~\ref{sec:incremental}.

We introduce additional notation and terminology required for the results of this section. Let~$R = \mR_+$ or~$\mR$, and let~$X$ denote a Banach space. The situations of interest presently are when~$X$ equals~$\mR^n$ or~$L^1([0,1],\mR^n)$, although there is no additional difficulty working in the general setting. For~$\tau \in R$ we denote by~$\sigma_{\tau} : L^1_{\rm loc}(R, X) \to L^1_{\rm loc}(R,X)$ the shift (or translation) operator by~$\tau$, that is,~$(\sigma_\tau f)(t) = f(t+ \tau)$ for all~$t \in R$. This is a left-shift when~$\tau \geq 0$. With a slight abuse of notation to avoid a proliferation of symbols, we use the same symbol for all such translation operators, that is, independently of the codomain~$X$ in~$L^1_{\rm loc}(R,X)$.

Here we shall consider~\eqref{eq:lure} under the standing assumption that
\[ f(t,z) = f(z) \quad \text{that is,~$f$ is independent of its first variable.} \]
When the forcing terms~$v$ in~\eqref{eq:lure} are defined on all of~$\mR$, we will also make use of the bilateral
behaviour~$\sB\sB$ of~\eqref{eq:lure} defined by
\[
  \sB\sB:=\big\{(v,x)\in L^1_{\rm loc}(\mR,\mR^n)\times
    W^{1,1}_{\rm loc}(\mR,\mR^n)\; : \; \text{$(v,x)$ satisfies~\eqref{eq:lure}
a.e. on~$\mR$}\big\}\,.
\]
A consequence of the assumed time-invariance of~$f$ is that both~$\sB$ and~$\sB\sB$ are shift-invariant, that is,
\begin{equation}\label{eq:shift}
(v,x)\in \sB \: (\sB\sB) \implies (\sigma_{\tau}v,\sigma_{\tau}x)\in \sB \: (\sB\sB) \quad \forall \: \tau \in \mR_+ \: (\mR)\,.
\end{equation}
We define~$BC(R,X)$ and~$BUC(R,X)$ as the spaces of all, respectively, bounded continuous and bounded uniformly continuous functions~$R \to X$. Endowed with the supremum norm,~$BC(R,X)$ and~$BUC(R,X)$ are Banach spaces.

Our present focus is on so-called Stepanov almost periodic functions. For which purpose, we first collect material on almost periodic functions in the sense of Bohr. For further background reading on almost periodic functions, we refer the reader to the texts~\cite{MR0275061,MR0020163,MR2460203}.

A set~$S \subseteq R$ is said to be {\em relatively dense} (in~$R$) if there exists~$l>0$
such that
\[
 [a,a+l]\cap S \neq \emptyset \quad \forall \: a \in R\,.
\]
For~$\eps>0$, we say that~$\tau \in R$ is an~$\eps${\em -period} of
$v\in C(R,X)$ if    
\[
\|v(t)-v(t+\tau)\| \leq \eps \quad \forall \: t \in R\,.
\]
We denote by~$P(v,\eps)\subseteq R$ the set of~$\eps$-periods of~$v$ and we say that~$v\in C(R,X)$ is {\it almost periodic} (in the sense of Bohr) if~$P(v,\eps)$ is relatively dense in~$R$ for every~$\eps>0$. We denote the set of almost periodic functions~$v\in C(R,X)$ by~$AP(R,X)$, and mention that~$AP(R,X)$ is a closed subspace of~$BUC(R,X)$. It is clear that any continuous periodic  function is almost periodic.

The readily established equality
\begin{equation}\label{eq:ap_inf_tail}
\sup_{t\geq\tau}\|v(t)\| =\|v\|_{L^\infty(R)} \quad \forall \: \tau \in R, \; \forall \: v \in AP(R,X)\,,
\end{equation}
shows that functions in~$AP(R,X)$ are completely determined by their ``infinite right tails''.

Let~$C_0(\mR_+,X)$ denote the subspace of~$C(\mR_+,X)$ of functions which converge to zero as~$t \to \infty$. We define 
\[
 AAP(\mR_+,X):=AP(\mR_+,X)+C_0(\mR_+,X)\,,
 \]
 as the space of all {\em asymptotically almost  periodic} functions. It is a closed subspace of~$BUC(\mR_+,X)$ and, since by~\eqref{eq:ap_inf_tail} we have that~$AP(\mR_+,X) \cap C_0(\mR_+,X) = \{0\}$, it follows that~$AAP(\mR_+,X) =AP(\mR_+,X) \oplus C_0(\mR_+,X)$ (direct sum). Consequently, for each~$v \in AAP(\mR_+,X)$, there exists a unique~$v^{\rm ap} \in AP(\mR_+,X)$ such that~$v - v^{\rm ap} \in C_0(\mR_+,X)$ (cf. \cite[Lemma 5.1]{gilmore2020infinite}).

The spaces~$AP(\mR_+,X)$ and~$AP(\mR,X)$ are closely related, as we now briefly recall. Indeed, it is shown in~\cite[Section 4]{gilmore2020infinite}\footnote{Itself following an idea in~\cite[Remark on p.~318]{MR0773063}} that map~$AP(\mR_+,X) \to AP(\mR,X)$,~$v \mapsto v_{\rm e}$ given by
\begin{equation}\label{eq:ap_extension}
v_{\rm e}(t):=\lim_{k \to \infty}v(t+\tau_k) \quad \forall \: t \in \mR\,,    
\end{equation}
where~$\tau_k \in P(v,1/k)$ for each~$k \in \mN$ and~$\tau_k\to\infty$ as~$k\to\infty$ has the following properties:
\begin{itemize}
    \item is well defined, that is,~$v_{\rm e} \in AP(\mR,X)$\,;
    \item~$v_{\rm e}$ extends~$v$ to~$\mR$\,;
    \item~$v \mapsto v_{\rm e}$ is an isometric isomorphism~$AP(\mR_+,X) \to AP(\mR,X)$.
\end{itemize}
We now recall the concept of Stepanov almost periodicity which is weaker than that of Bohr.  For which purpose, recall first the space of uniformly locally-integrable functions~$UL^1_{\rm loc}(R,X)$ by
 \[
 UL^1_{\rm loc}(R,X):=\left\{w\in L^p_{\rm loc}(R,X) \: : \: \sup_{a\in R}\int_{a}^{a+1}\|w(t)\| \, \rd t<\infty\right\}\,.
 \]
It is straightforward to show that~$UL^p_{\rm loc}(R,X)$ when equipped with the Stepanov norm
\[
\|w\|_{S^1}:=\sup_{a\in R} \int_{a}^{a+1}\| w(t) \|\, \rd t \,,
\]
 is a Banach space. The choice of~$1$ in the upper limit of the above integral is also unimportant insomuch as replacing~$a+1$ by~$a+b$ for positive~$b$ gives rise to equivalent norms.

Let~$v\in L_{\rm loc}^1(R,X)$, and~$\eps>0$. We say that\footnote{The terminology {\em~$\eps$-period of~$v$ in the sense of Stepanov} is used in~\cite{gilmore2020infinite} and analogously for the following concepts.}~$\tau\in R$ is a {\em Stepanov~$\eps$-period of~$v$},   if
\[ \|(\sigma_\tau-I)v\|_{S^1} = \sup_{a\in R}\int_a^{a+1}\|v(\theta+\tau)-v(\theta)\| \, \rd \theta\leq\eps\,.\]
The set of Stepanov~$\eps$-periods of~$v$ is denoted by~$P_1(v,\eps)$. The function~$v$ is called {\em Stepanov almost periodic} if, for every~$\eps>0$, the set~$P_1(v,\eps)$ is relatively dense in~$R$. 

We let~$S^1(R,X)$ denote the set of all functions in~$L_{\rm loc}^1(R,X)$ which are Stepanov almost periodic. It is clear that~$AP(R,X)\subset S^1(R,X)$ (where the inclusion is strict, since~$S^1(R,X)$ contains discontinuous functions), and~$P(v,\eps)\subset P_1(v,\eps)$ for every~$v\in AP(R,X)$ and every~$\eps>0$. Moreover, it is readily shown that~$S^1(R,X)$ is a closed subspace of~$UL_{\rm loc}^1(R,X)$ with respect to the Stepanov norm~$\|\,\cdot\,\|_{S^1}$. 

Routine modifications to the map~$v \mapsto v_{\rm e}$ given by~\eqref{eq:ap_extension} shows that it extends to an isometric isomorphism of~$S^1(\mR_+,X) \to S^1(\mR,X)$.

The generalized Fourier coefficients of~$v \in AP(\mR,X)$ are defined by
 \[
  \hat{v}(\lambda):=\lim_{T\to\infty}\frac{1}{2T}\int_{-T}^{T}\e^{-i\lambda t}v(t) \, \rd t \quad\forall\:\lambda\in\mR.
 \]
It is well-known that the above limit exists for all~$\lambda\in\mR$ and the frequency spectrum
 \[
 \sigma_{\rm f}(v):=\{\lambda\in\mR: \hat{v}(\lambda)\not= 0\}
 \]
of~$v$ is countable, see for example,~\cite[Section VI.3]{c89}.
 The module~${\rm mod}(v)$ of~$v\in AP(\mR,X)$ is given by
 \[ {\rm mod}(v) := \Big\{ \sum_{\lambda\in\sigma_{\rm f}(v)}m(\lambda)\lambda \: : \: \text{$m:\sigma_{\rm f}(v)\to\mZ$ has finite support}\Big\}\,,\]
which carries the structure of a~$\mZ$-module and is the smallest subgroup of~$\mR$ containing~$\sigma_{\rm f}(v)$.

The generalized Fourier coefficients of Stepanov almost periodic functions may be defined by appealing to Bochner transform as follows. Given a function~$v\in L^1_{\rm loc}(R,X)$, define~$\tilde v:R\to L^1([0,1],X)$, the so-called Bochner transform of~$v$, by
 \[
 \big(\tilde v(t)\big)(s):=v(t+s)\quad\forall\:t\in R,\,\,\forall\:s\in[0,1].
 \]
Then~$\tilde v\in C(R,L^1([0,1],X))$ and, furthermore,~$v\in S^1(R,X)$ if, and only if,~$\tilde v\in AP(R,L^1([0,1],X))$. In particular, we may define the generalized Fourier coefficients of~$v\in S^1(\mR,X)$ as those of~$\tilde v\in AP(\mR,L^1([0,1],X))$.

%
%
The following proposition is the main result of this section, and is inspired by~\cite[Theorem 4.3]{gilmore2021incremental}.

\begin{proposition}\label{prop:periodic}
Let~$ w \in S^1(\mR_+,\mR^m)$. Consider the Lur'e equation~\eqref{eq:lure}, and assume that:
\begin{enumerate}[label = {\rm (\roman*)}, itemsep = 1ex, topsep = 1ex, leftmargin = 1ex, itemindent = 4ex]
    \item \label{ls:p1} the pair~$(C,A)$ is detectable;
    \item \label{ls:p2} there exists a symmetric positive definite~$P \in \mR^{n\times n}$ which satisfies~\eqref{eq:P_lmi_condition};
    \item \label{ls:p3} hypotheses~\ref{ls:A1} and~\ref{ls:A2} hold, and;
    \item \label{ls:p4} there exists~$(w,z) \in \sB$ with bounded~$z$.
\end{enumerate}
Then there exists a unique~$z^{\rm ap} \in AP(\mR_+,\mR^n)$ such that~$(w, z^{\rm ap}) \in \sB$ and, for every~$\eps >0$, there exists~$\delta>0$ such that~$P_1(w,\delta)\subset P(z^{\rm ap},\eps)$. 

Let~$(v,x) \in \sB$. The following further statements hold.
\begin{enumerate}[label = {\rm (\arabic*)}, itemsep = 1ex, topsep = 1ex, leftmargin = 1ex, itemindent = 4ex]
\item \label{ls:ap_1} If~$v\in L^1(\mR_+,\mR^m)$ with~$v - w \in L^1(\mR_+, \mR^m)$, then
  \begin{equation}\label{eq:ap_convergence}
    \lim_{t\to\infty}\big(x(t)-z^{\rm ap}(t)\big)=0\,,  
  \end{equation}
that is,~$x\in AAP(\mR_+,\mR^n)$ with~$x^{\rm ap}=z^{\rm ap}$.
\item \label{ls:ap_2} If~$w$ is periodic with period~$\tau$, then~$z^{\rm ap}$ is~$\tau$-periodic.
\item \label{ls:ap_3}~$(w_{\rm e},z^{\rm ap}_{\rm e})\in\sB\sB$ and there is no other bounded function~$\hat x : \mR \to \mR^n$ such that~$(w_{\rm e}, \hat x) \in \sB\sB$.
\item \label{ls:ap_4}~${\rm mod}(z^{\rm ap}_{\rm e}) \subseteq {\rm mod}(\widetilde{w_{\rm e}}) = {\rm mod}(w_{\rm e})$, the latter inequality holding if~$w \in AP(\mR_+,\mR^m)$. 
\end{enumerate}

\end{proposition}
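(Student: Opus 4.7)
The plan is to follow the approach of~\cite[Theorem 4.3]{gilmore2021incremental}, built around the semi-global incremental IISS estimate of Theorem~\ref{thm:incremental_passive_SG}\ref{ls:SG_IISS} applied with $R := \|z\|_{L^\infty}$; this yields comparison functions $\psi \in \sK\sL$ and $\phi \in \sK$ that control the difference of any two trajectories of~\eqref{eq:lure} whose uniform bound does not exceed $R$.

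To construct $z^{\rm ap}$, I would choose $\tau_k \in P_1(w, 1/k)$ with $\tau_k \to \infty$ (possible by the relative density of Stepanov periods), and exploit the shift-invariance~\eqref{eq:shift} to treat each $(\sigma_{\tau_k} w, \sigma_{\tau_k} z) \in \sB$ as a trajectory with the same uniform bound $R$. The key ``lookback trick'' is as follows: for fixed $t \geq 0$ and $T > 0$, view $s \mapsto z(s + \tau_j - T)$ and $s \mapsto z(s + \tau_k - T)$ as trajectories with forcings $\sigma_{\tau_j - T} w$ and $\sigma_{\tau_k - T} w$; applying the incremental IISS at time $s = T + t$ and using the triangle inequality to bound $\|\sigma_{\tau_j - T} w - \sigma_{\tau_k - T} w\|_{S^1}$ by $1/j + 1/k$ gives
\[
\|z(t + \tau_j) - z(t + \tau_k)\| \leq \psi(2R, T + t) + \phi\bigl((T + t + 1)(1/j + 1/k)\bigr).
\]
Choosing $T$ large first so that $\psi(2R, T)$ is small, then $j, k$ large so that the $\phi$-term is small, produces a Cauchy sequence, uniformly in $t$ on compact intervals. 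Define $z^{\rm ap}(t) := \lim_k z(t + \tau_k)$; the uniformity yields continuity, and passing to the limit in the integral formulation of~\eqref{eq:lure} along the shifts, using $\sigma_{\tau_k} w \to w$ in $L^1_{\rm loc}$ by the Stepanov period property, confirms $(w, z^{\rm ap}) \in \sB$. For the period-transfer $P_1(w, \delta(\eps)) \subset P(z^{\rm ap}, \eps)$ and hence Bohr almost periodicity, apply the incremental IISS to $(\sigma_a w, \sigma_a z^{\rm ap})$ and $(\sigma_{a+\tau} w, \sigma_{a+\tau} z^{\rm ap})$ for $\tau \in P_1(w, \delta)$ and $a \geq 0$ to obtain $\|z^{\rm ap}(t + a) - z^{\rm ap}(t + a + \tau)\| \leq \psi(2\|z^{\rm ap}\|_\infty, t) + \phi((t+1)\delta)$, then combine a right-tail bound with continuity and the analog of~\eqref{eq:ap_inf_tail} to extend the bound to all of $\mR_+$. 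Uniqueness is immediate: any other a.p.\ solution $\tilde z$ paired with $w$ satisfies $\|z^{\rm ap}(t) - \tilde z(t)\| \leq \psi(\|z^{\rm ap}(0) - \tilde z(0)\|, t) \to 0$, which~\eqref{eq:ap_inf_tail} forces to be zero identically.

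Each numbered statement is then handled as follows. For~\ref{ls:ap_1}, restart the incremental IISS from some large $t_0$ to get $\|x(t) - z^{\rm ap}(t)\| \leq \psi(\|x(t_0) - z^{\rm ap}(t_0)\|, t - t_0) + \phi(\int_{t_0}^t \|(v - w)(s)\|\,{\rm d}s)$; boundedness of $x$, and hence of $\|x(t_0) - z^{\rm ap}(t_0)\|$, comes from the un-restarted estimate together with $v - w \in L^1$, after which one first chooses $t_0$ large (shrinking the $\phi$-term via $L^1$-integrability) and then $t - t_0$ large (shrinking the $\psi$-term). Statement~\ref{ls:ap_2} is immediate from uniqueness, since $\tau$-periodicity of $w$ makes $(w, \sigma_\tau z^{\rm ap})$ another bounded a.p.\ solution. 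For~\ref{ls:ap_3}, extend $w$ and $z^{\rm ap}$ to $\mR$ via~\eqref{eq:ap_extension}, verify $(w_{\rm e}, z^{\rm ap}_{\rm e}) \in \sB\sB$ by a pointwise limit argument on the Bochner-transformed integral formulation, and obtain bilateral uniqueness by sending the initial time $s \to -\infty$ in the IISS estimate. For~\ref{ls:ap_4}, invoke a Favard-type argument: since $z^{\rm ap}_{\rm e}$ is the unique bounded bilateral response to $w_{\rm e}$, any approximating sequence of shifts of $w_{\rm e}$ forces the corresponding shifts of $z^{\rm ap}_{\rm e}$ to converge to $z^{\rm ap}_{\rm e}$, yielding the module inclusion.

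The main obstacle lies in the interplay within the lookback trick between the length $T$ of the lookback window and the Stepanov period size $\delta$ (or $1/k$): the $\phi$-term carries a factor of $T + t + 1$ multiplying the Stepanov period size, creating tension between making $\psi(2R, T + t)$ small (requiring large $T$) and keeping $\phi((T + t + 1)\delta)$ small (requiring correspondingly small $\delta$). Properly orchestrating this two-step $\eps$-management underlies both the construction of $z^{\rm ap}$ and the period-transfer property, and analogous care is needed in extending the right-tail period bound to all of $\mR_+$.
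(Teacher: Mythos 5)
Your proposal faithfully reconstructs the argument that the paper cites for this result, namely the proof of Theorem~4.3 of~\cite{gilmore2021incremental} with the semi-global incremental IISS estimate now supplied by statement~\ref{ls:SG_IISS} of Theorem~\ref{thm:incremental_passive_SG} (with $R := \|z\|_{L^\infty}$); this is precisely what the paper's proof says to do. The lookback construction of $z^{\rm ap}$, the period-transfer via the IISS estimate, the uniqueness argument using~\eqref{eq:ap_inf_tail}, the restart argument for~\ref{ls:ap_1}, and the handling of~\ref{ls:ap_2}--\ref{ls:ap_4} all match the cited route, so the proposal is correct and is essentially the same approach as the paper.
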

The conclusions of Proposition~\ref{prop:periodic} are the same as those of~\cite[Theorems 4.3]{gilmore2021incremental}. The novelty of the current result is that the underlying and essential incremental stability properties come from Theorem~\ref{thm:incremental_passive_SG}, which leverages the passivity structure considered presently. Indeed, Lur'e systems with nonlinearities of the form~\eqref{eq:f_power_d} fall outside of the scope of the results of~\cite{gilmore2021incremental}.

By way of hypotheses, items~\ref{ls:p1}--\ref{ls:p3} appear in Theorem~\ref{thm:incremental_passive_SG}, which is the main result used to establish Proposition~\ref{prop:periodic} above. With regards to hypothesis~\ref{ls:p3}, recall the standing assumption on the resulting~$\alpha_\Gamma$ and~$\theta_\Gamma$ in Remark~\ref{rem:technical}.
 Hypothesis~\ref{ls:p4} is imposed to ensure that every trajectory~$(w,z) \in \sB$ with~$w \in S^1(\mR_+,\mR^m)$ has bounded~$z$. By statement~\ref{ls:SG_IISS} of Theorem~\ref{thm:incremental_passive_SG}, hypothesis~\ref{ls:p4} follows from~\ref{ls:p1}--\ref{ls:p3} if~$f(0) =0$, in which case~$(0,0) \in \sB$.

Extensive commentary is given in~\cite{gilmore2020infinite} and~\cite{gilmore2021incremental} on the results~\cite[Theorem 4.5]{gilmore2020infinite} and~\cite[Theorems 4.3, 4.5]{gilmore2021incremental}, respectively, and much of which is applicable here. To reduce repetition, we do not reproduce the commentary. 
%
%
%
\begin{proof}[Proof of Proposition~\ref{prop:periodic}]
    The proof is the same as that of~\cite[Theorem 4.3]{gilmore2021incremental}, replacing references to~\cite[Theorem 3.3]{gilmore2021incremental} to statement~\ref{ls:SG_IISS} of Theorem~\ref{thm:incremental_passive_SG}. These results give rise to the same inequality~\eqref{eq:incremental_IISS_lure}, valid for the same sets of trajectories. We, therefore, omit the details.
\end{proof}

Proposition~\ref{prop:periodic} leveraged statement~\ref{ls:SG_IISS} of Theorem~\ref{thm:incremental_passive_SG} for the underlying incremental stability result. The next proposition instead leverages statement~\ref{ls:SG_ISS} of Theorem~\ref{thm:incremental_passive_SG} and provides additional convergence properties.
%
%
\begin{proposition}\label{prop:periodic'}
Let~$ w \in S^1(\mR_+,\mR^m) \cap L^\infty(\mR_+,\mR^m)$. Consider the Lur'e equation~\eqref{eq:lure}, and assume that:
\begin{enumerate}[label = {\rm (\roman*)}, itemsep = 1ex, topsep = 1ex, leftmargin = 1ex, itemindent = 4ex]
    \item \label{ls:p1'} the pair~$(C,A)$ is detectable;
    \item \label{ls:p2'} there exists a symmetric positive semi-definite~$P \in \mR^{n\times n}$ which satisfies~\eqref{eq:P_lmi_condition};
    \item \label{ls:p3'} hypotheses~\ref{ls:A1}--\ref{ls:A4} hold, and;
    \item \label{ls:p4'} there exists~$(w,z) \in \sB$ with bounded~$z$.
\end{enumerate}
Then the conclusions of Proposition~\ref{prop:periodic} hold. Moreover, the following further statements are valid:
\begin{enumerate}[label = {\rm (\arabic*)}, itemsep = 1ex, topsep = 1ex, leftmargin = 1ex, itemindent = 4ex]
\item \label{ls:ap_1'} If~$(v,x) \in \sB^\infty$ with~$v\in L^\infty(\mR_+,\mR^m)$ and~$\| v - w \|_{L^\infty(t,\infty)} \to 0$ as~$t \to \infty$, then~\eqref{eq:ap_convergence} holds.
\item \label{ls:ap_2'} There exists~$\theta \in \sK$ such that, for~$(v, x^{\rm ap}) \in \sB^\infty$ with~$v \in S^1(\mR_+,\mR^m) \cap L^\infty(\mR_+,\mR^m)$ and~$x^{\rm ap} \in AP(\mR_+, \mR^m)$
\[ \| x^{\rm ap} - z^{\rm ap} \|_{L^\infty} \leq \theta( \| v - w \|_{L^\infty} )\,.\]
\end{enumerate}
\end{proposition}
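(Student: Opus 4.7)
The overall strategy is to establish the conclusions of Proposition~\ref{prop:periodic} first and then derive the two additional statements from the semi-global incremental ISS estimate of Theorem~\ref{thm:incremental_passive_SG}~\ref{ls:SG_ISS}, which applies under the present hypotheses since \ref{ls:A1}, \ref{ls:A3} and~\ref{ls:A4} hold. The boundedness $w \in L^\infty(\mR_+,\mR^m)$ ensures that the bounded trajectory $(w,z)$ supplied by~\ref{ls:p4'} lies in $\sB^\infty$, so both the linear-gain IISS bound from Theorem~\ref{thm:incremental_passive_SG}~\ref{ls:SG_IISS} and the ISS bound from~\ref{ls:SG_ISS} are meaningfully available for the shifted trajectories $(\sigma_{\tau_k} w, \sigma_{\tau_k} z)$. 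The conclusions of Proposition~\ref{prop:periodic}—existence and uniqueness of $z^{\rm ap}$, Stepanov approximation, periodicity, bilateral extension and module inclusion—then follow verbatim from the argument of~\cite[Theorem 4.3]{gilmore2021incremental}, applied along a sequence of Stepanov $1/k$-periods $\tau_k \to \infty$.

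For~\ref{ls:ap_1'}, let $(v,x) \in \sB^\infty$ and pick $R$ large enough that both $(v,x)$ and $(w, z^{\rm ap})$ fall within the semi-global admissibility window of Theorem~\ref{thm:incremental_passive_SG}~\ref{ls:SG_ISS}. By the shift-invariance~\eqref{eq:shift}, I would apply the incremental ISS estimate between these two trajectories starting from an arbitrary time $s \geq 0$ to obtain
\[
\|x(t+s) - z^{\rm ap}(t+s)\| \leq \psi(\|x(s) - z^{\rm ap}(s)\|, t) + \phi(\|v - w\|_{L^\infty(s, s+t)}) \quad \forall \: t \geq 0.
\]
Given $\eps > 0$, the tail-vanishing hypothesis on $v - w$ furnishes $s_0$ with $\phi(\|v - w\|_{L^\infty(s_0,\infty)}) < \eps/2$; letting $t \to \infty$ then forces the $\psi$ term to vanish since $\|x(s_0) - z^{\rm ap}(s_0)\|$ is a fixed finite number, yielding $\limsup_{t\to\infty}\|x(t) - z^{\rm ap}(t)\| \leq \eps/2$, and the arbitrariness of $\eps$ delivers~\eqref{eq:ap_convergence}.

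For~\ref{ls:ap_2'}, both $x^{\rm ap}$ and $z^{\rm ap}$ are bounded (as almost periodic functions) and both $v, w$ lie in $L^\infty$, so a radius $R$ depending on $\|v - w\|_{L^\infty}$ accommodates both trajectories. Applying the incremental ISS estimate from time $s \geq 0$ and taking the supremum over $t \geq T$ gives
\[
\sup_{t \geq T}\|x^{\rm ap}(t+s) - z^{\rm ap}(t+s)\| \leq \psi(\|x^{\rm ap}(s) - z^{\rm ap}(s)\|, T) + \phi(\|v - w\|_{L^\infty}).
\]
The key observation is that $x^{\rm ap} - z^{\rm ap} \in AP(\mR_+,\mR^n)$, so the tail identity~\eqref{eq:ap_inf_tail} applied to this difference yields $\sup_{t \geq T}\|x^{\rm ap}(t+s) - z^{\rm ap}(t+s)\| = \|x^{\rm ap} - z^{\rm ap}\|_{L^\infty}$. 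Sending $T \to \infty$ then eliminates the $\psi$ term and produces the required bound $\|x^{\rm ap} - z^{\rm ap}\|_{L^\infty} \leq \phi(\|v - w\|_{L^\infty})$.

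The main obstacle I anticipate is the uniformity required in the $\theta$-function of~\ref{ls:ap_2'}: the comparison function $\phi$ in Theorem~\ref{thm:incremental_passive_SG}~\ref{ls:SG_ISS} depends on the semi-global radius $R$, and $R$ must grow with $\|v\|_{L^\infty}$ and hence with $\|v - w\|_{L^\infty}$. Constructing a single $\theta \in \sK$ dominating the family $\{\phi_R\}$ along a suitable parametrisation $R = R(\|v - w\|_{L^\infty})$ is a routine concatenation but requires care to preserve the $\sK$-class properties, and to control $\|x^{\rm ap}\|_{L^\infty}$ in terms of $\|v\|_{L^\infty}$ via the underlying ISS of the Lur'e equation itself (derived from Theorem~\ref{thm:iss_inclusion} after subtracting $f(\,\cdot\,,0)$ into the forcing).
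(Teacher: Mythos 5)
Your sketch correctly identifies the central mechanism for statements~\ref{ls:ap_1'} and~\ref{ls:ap_2'}: apply the semi-global incremental ISS estimate \eqref{eq:incremental_ISS_lure} from Theorem~\ref{thm:incremental_passive_SG}~\ref{ls:SG_ISS} to a pair of time-shifted trajectories, use shift-invariance \eqref{eq:shift} and the tail property \eqref{eq:ap_inf_tail}, and send $t$ or $T$ to infinity. This matches the spirit of the paper's proof, which simply defers to the argument of~\cite[Theorem~4.5]{gilmore2021incremental} with the ISS estimate of Theorem~\ref{thm:incremental_passive_SG}~\ref{ls:SG_ISS} substituted for the corresponding result there.

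However, there is a genuine slip. You assert that "both the linear-gain IISS bound from Theorem~\ref{thm:incremental_passive_SG}~\ref{ls:SG_IISS} and the ISS bound from~\ref{ls:SG_ISS} are meaningfully available" and then propose to obtain the Proposition~\ref{prop:periodic} conclusions verbatim from~\cite[Theorem~4.3]{gilmore2021incremental}, whose argument is built on the IISS estimate \eqref{eq:incremental_IISS_lure}. But statement~\ref{ls:SG_IISS} of Theorem~\ref{thm:incremental_passive_SG} explicitly requires $P$ to be positive \emph{definite}, whereas Proposition~\ref{prop:periodic'} only assumes positive \emph{semi}-definiteness via item~\ref{ls:p2'}. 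The IISS estimate is therefore not available under the stated hypotheses, and the conclusions of Proposition~\ref{prop:periodic} must instead be re-derived from the ISS estimate \eqref{eq:incremental_ISS_lure} --- that is, along the lines of~\cite[Theorem~4.5]{gilmore2021incremental}, not Theorem~4.3 of that paper. The trade-off that makes this possible is exactly the extra structure you have here: $w$ is assumed to lie in $L^\infty(\mR_+,\mR^m)$ (so shifted trajectories sit inside a fixed semi-global window), and hypotheses~\ref{ls:A3} and~\ref{ls:A4} replace the weaker~\ref{ls:A2}. Your sketch would fail if read literally, because without these substitutions the Stepanov-period argument cannot proceed; with them, the proof goes through, but not "verbatim from Theorem~4.3."

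Finally, you correctly flag the nontrivial step of producing a single $\theta\in\sK$ in statement~\ref{ls:ap_2'} from a family $\{\phi_R\}$ with $R$ growing in $\|v-w\|_{L^\infty}$; this is left as "routine concatenation," which is a gap in rigor, although the paper also delegates it to the cited proof. The observation $\|v\|_{L^\infty}\leq\|v-w\|_{L^\infty}+\|w\|_{L^\infty}$ (with $w$ fixed) is the right ingredient, but the monotone-envelope construction needs to be carried out and shown to yield a class-$\sK$ function.
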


\begin{proof}
    The proof is the same as that of~\cite[Theorem 4.5]{gilmore2021incremental}, replacing references to~\cite[Theorem 3.7]{gilmore2021incremental} to statement~\ref{ls:SG_ISS} of Theorem~\ref{thm:incremental_passive_SG}. These results give rise to the same inequality~\eqref{eq:incremental_ISS_lure}, valid for the same sets of trajectories. We, therefore, omit the details.
\end{proof}
%
%
\begin{remark}
Here we comment on a further convergence result in the case that hypothesis~\ref{ls:A5} is additionally imposed in Proposition~\ref{prop:periodic'}. Loosely speaking, in this case a version of statement~(1) of~\cite[Theorem 4.5]{gilmore2020infinite} may be derived, which relates to forcing terms which are only asymptotically Stepanov almost periodic, although now in the Stepanov 2-norm, see~\cite[Sections 2 and 4]{gilmore2020infinite}. For brevity, we do not give a formal statement here.
\end{remark}
%
%
\section{Examples}\label{sec:examples}

We illustrate our results through three examples.

%
%
\begin{example}\label{ex:one_mass}
    As possibly the simplest interesting example, consider first the following model for a forced damped mass-spring system with nonlinear damping force, that is,
    \begin{equation}\label{eq:one_mass}
        m \ddot z + kz + f(\dot z) = v\,.
    \end{equation}
    Here~$z$ denotes the displacement of the mass, and the positive constants~$m$ and~$k$ are the mass and (linear) spring constant, respectively. The terms~$f : \mR \to \mR$ and~$v$ are a nonlinearity, so that~$f(\dot z)$ represents a nonlinear damping force, and an external (excitation) force, respectively. We assume that~$f(0) =0$.

Equation~\eqref{eq:one_mass} may be written as a Lur'e equation~\eqref{eq:lure} by setting
    \begin{equation}\label{eq:one_mass_data}
    x : = \bpm{z \\ \dot z}, \quad A : = \bpm{0 & 1 \\ -k/m & 0}, \quad B := \bpm{0 \\ 1/m}, \quad C := \bpm{0 & 1}\,.
    \end{equation}
    %
%
We proceed to verify the hypotheses of our main results in Sections~\ref{sec:incremental} and~\ref{sec:ap}. The matrix
\[ P : = \bpm{k & 0 \\ 0 & m} = P^\top \,,\]
which is evidently positive definite, satisfies
\begin{equation}\label{eq:one_mass_lmi}
    A^\top P + P A = 0 \quad \text{and} \quad PB - C^\top =0\,,
\end{equation} 
so that in particular~$P$ is a positive definite solution of~\eqref{eq:P_lmi_condition}. Note that~$P$ arises naturally as, along solutions~$x$ of~\eqref{eq:lure}, the associated quadratic form
\[ \frac{1}{2}\langle x(t) , P x(t)\rangle = \frac{1}{2}k z^2(t) + \frac{1}{2} m \dot z^2(t) \quad \forall \: t \geq 0\,,\]
equals the energy stored in the mass-spring system at time~$t$. This, combined with the first equality in~\eqref{eq:one_mass_lmi}, captures that there is no dissipation of energy in the dynamics generated by~$A$ --- the damping terms arise through the inclusion of~$B$,~$C$ and~$f$ in the model. The pair~$(C,A)$ is detectable, as
\[ A - BC = \bpm{0 & 1 \\ -k/m & 0} - \bpm{0\\1/m}\bpm{0 &1 } = \bpm{0 & 1 \\ -k/m & -1/m}\,,\]
is Hurwitz. The remaining hypotheses~\ref{ls:A1}--\ref{ls:A5} which appear in Theorem~\ref{thm:incremental_passive_SG} depend on the nonlinear term~$f$. Example~\ref{ex:power} demonstrates that the power law given by~\eqref{eq:f_power_d} satisfies~\ref{ls:A1}--\ref{ls:A4}, and additionally~\ref{ls:A5} if~$a_0$ in~\eqref{eq:f_power_d} is positive. Since~$f(0) =0$, hypothesis~\ref{ls:p4} from Propositions~\ref{prop:periodic} and~\ref{prop:periodic'} is satisfied (see the commentary after Proposition~\ref{prop:periodic}). In particular, the hypotheses of these results are also satisfied when the suitable assumptions from~\ref{ls:A1}--\ref{ls:A5} hold. 

%
%
In summary, we highlight that under essentially a monotonicity type assumption on the nonlinear term, equation~\eqref{eq:one_mass} exhibits ``nice'' stability and convergence properties. Equation~\eqref{eq:one_mass} is related to the Duffing Equation (see, for example~\cite[Secrtion 5.7]{jordan2007nonlinear}), namely,
\[ m \ddot z + d \dot z + k_1z + k_2 z^3= v\,, \]
which is known to admit chaotic behaviour for certain parameter values. We reconcile this to the present example by noting that, crucially,~$k_2$ is not required to be positive in the Duffing Equation, which leads to a Lur'e equation that does not satisfy the hypotheses of the present work. \hfill~$\square$
\end{example}

%
%
\begin{example}\label{ex:two_mass}
We consider the coupled damped mass-spring system depicted in Figure~\ref{fig:two_masses}. The displacement of the~$i$-th mass is denoted~$z_i$, and each mass is acted on by an external force~$v_i$. As in Example~\ref{ex:one_mass}, the positive constants~$m_i$ and~$k_i$ equal the~$i$-th mass and spring constant, respectively.

%
%
\begin{figure}[h!]
    \centering
  \begin{tikzpicture}
\tikzstyle{spring}=[thick,decorate,decoration={zigzag,pre length=0.3cm,post length=0.3cm,segment length=6}]
\tikzstyle{damper}=[thick,decoration={markings,  
  mark connection node=dmp,
  mark=at position 0.5 with 
  {
    \node (dmp) [thick,inner sep=0pt,transform shape,rotate=-90,minimum width=15pt,minimum height=3pt,draw=none] {};
    \draw [thick] ($(dmp.north east)+(2pt,0)$) -- (dmp.south east) -- (dmp.south west) -- ($(dmp.north west)+(2pt,0)$);
    \draw [thick] ($(dmp.north)+(0,-5pt)$) -- ($(dmp.north)+(0,5pt)$);
  }
}, decorate]
\tikzstyle{ground}=[fill,pattern=north east lines,draw=none,minimum width=0.75cm,minimum height=0.3cm]

\node (M) [draw,outer sep=0pt,thick,minimum width=1cm, minimum height=2cm,rounded corners=1.2ex] {$m_1$};
\node (M2) [draw,outer sep=0pt,thick,minimum width=1cm, minimum height=2cm,rounded corners=1.2ex] at (2.5,0) {$m_2$};

\node (ground) [ground,anchor=north,xshift=1cm,yshift=-0.25cm,minimum width=6cm] at (M.south) {};
\draw (ground.north east) -- (ground.north west);

\node (wall) [ground,anchor=south, minimum height=3cm, xshift=-0.43cm] at (ground.west) {}; 
\draw (wall.north east) -- (wall.south east);

\draw [thick] (M.south west) ++ (0.2cm,-0.125cm) circle (0.125cm)  (M.south east) ++ (-0.2cm,-0.125cm) circle (0.125cm);
\draw [thick] (M2.south west) ++ (0.2cm,-0.125cm) circle (0.125cm)  (M2.south east) ++ (-0.2cm,-0.125cm) circle (0.125cm);

\draw [spring] (M2.220) -- ($(M.north east)!(M.220)!(M.south east)$)node[below, pos = 0.5, yshift = -0.2cm]{$k_2$};
\draw [spring] (M.220) -- ($(M.220) - (1.5,0)$)node[below, pos = 0.5, yshift = -0.2cm]{$k_1$};

\draw [damper] (M.170) -- ($(M.170) - (1.5,0)$)node[above, pos = 0.5, yshift = 0.25cm]{$f_1$};
\draw [damper] (M2.170) -- ($(M.north east)!(M2.170)!(M.south east)$)node[above, pos = 0.5, yshift = 0.25cm]{$f_2$};

\draw[thick, -latex] ($(M) - (0,2)$)--($(M) - (-1,2)$)node[below]{$z_1$};
\draw[thick, -latex] ($(M2) - (0,2)$)--($(M2) - (-1,2)$)node[below]{$z_2$};

\draw[thick, -latex] (M.90) |- ($(M.90) + (1,0.5)$)node[above]{$v_1$};
\draw[thick, -latex] (M2.90) |- ($(M2.90) + (1,0.5)$)node[above]{$v_2$};



\end{tikzpicture}
    \caption{Coupled mass-spring system}
    \label{fig:two_masses}
\end{figure}
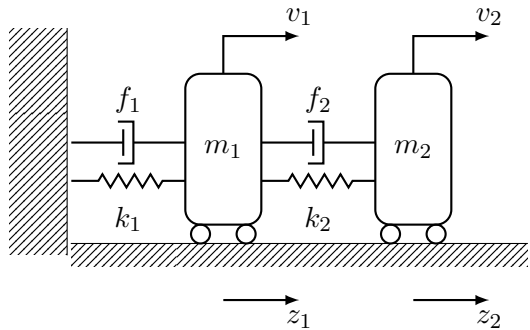

Elementary physical modelling yields the resulting coupled system of forced differential equations
\begin{equation}\label{eq:coupled_msd}
    \left.\begin{aligned}
       m_1 \ddot z_1 + k_1 z_1 + f_1(\dot z_1) - k_2(z_2 - z_1) - f_2(\dot z_2 - \dot z_1) &= v_1 \\
m_2 \ddot z_2 + k_2 (z_2 - z_1)  +  f_2(\dot z_2 - \dot z_1) &= v_2        \,,
\end{aligned}\right\}
\end{equation}
which, introducing
   \begin{equation}\label{eq:msd_lure}
   \begin{aligned}
    x &: = \bpm{z_1 \\ \dot z_1 \\ z_2 \\ \dot z_2}, \quad A : = \bpm{0& 1 &0 &0 \\ -(k_2+k_1)/m_1 &0 &k_2/m_1 &0 \\ 0 &0& 0& 1\\ k_2/m_2 &0 &-k_2/m_2& 0}, \quad B:= \bpm{0 & 0\\ 1/m_1 & -1/m_1 \\ 0 & 0 \\ 0 & 1/m_2}\notag \\
     C &:= \bpm{0 & 1 & 0 & 0 \\ 0 & -1 & 0 & 1}, \quad f(y) := \bpm{f_1(y_1) \\ f_2(y_2)} \quad \forall \: y = \bpm{y_1\\ y_2} \in \mR^2\,,
    \end{aligned}
    \end{equation}
may be written as Lur'e equation~\eqref{eq:lure} with~$n=4$ and~$m=2$.

%
%
We verify the hypotheses of the present work. For which purpose, define
\[ P : = \bpm{k_1+k_2& 0& -k_2& 0\\ 0& m_1&0&0\\ -k_2&0&k_2& 0\\ 0& 0&0&m_2}\,,\]
which is symmetric and positive definite, since~$0 \leq \langle \xi, P \xi \rangle$ for all~$\xi \in \mR^4$ and~$\langle \xi, P \xi \rangle = 0$ necessitates that~$\xi = 0$. As with Example~\ref{ex:one_mass}, the quadratic form induced by~$P$ has the interpretation of stored energy of the coupled damped mass-spring system. A routine calculation shows that~\eqref{eq:one_mass_lmi} holds, so that~$P$ is a positive-definite solution of~\eqref{eq:P_lmi_condition}. To establish detectability, a straightforward calculation gives
\begin{equation}\label{eq:two_mass_detect_1}
     \big\langle \xi, \big((A-BC)^\top P + P (A-BC)\big) \xi \big \rangle = -2 \xi_2^2 - 2(\xi_2 - \xi_4)^2 \quad \forall \: \xi \in \mR^4\,.
\end{equation}
Let~$V(\xi) := \langle \xi, P\xi\rangle$. In light of~\eqref{eq:two_mass_detect_1} and
\[ \langle \nabla V(\xi), (A-BC)\xi \rangle = \big\langle \xi, \big((A-BC)^\top P + P (A-BC)\big) \xi \big \rangle\,, \]
a routine application of Lasalle's Invariance Principle (see, for example~\cite[Theorem 5.12]{MR3288478}) combined with an examination of the linear system of differential equations
\begin{equation}\label{eq:two_mass_detect_2}
\dot \xi = (A-BC)\xi\,,
\end{equation}
proves that every solution of~\eqref{eq:two_mass_detect_2} converges to zero as~$t \to \infty$. In particular,~$A-BC$ is Hurwitz, so that the pair~$(C,A)$ is detectable.

As before, hypotheses~\ref{ls:A1}--\ref{ls:A5} depend on the nonlinearity~$f$. Since~$f$ is diagonal, as in~\eqref{eq:diagonal}, Example~\ref{ex:diagonal} ensures that~$f$ satisfies any of~\ref{ls:A1}--\ref{ls:A5} if both~$f_1$ and~$f_2$ do.

%
%
For a numerical simulation, we fix the following model data and initial conditions
\begin{subequations}\label{eq:msd_coupled_data}
\begin{equation}\label{eq:msd_coupled_linear_data}
\left.\begin{aligned}
    m_1 &= 1.5, \quad m_2 = 0.75, \quad k_1 = 0.5, \quad k_2 = 1.2, \quad f_1(y) = y \lvert y \rvert, \quad f_2(y)  = y \lvert y \rvert^{3/2} \\
     x_0^1 & = \bpm{0.25& 0.25}^\top, \quad x_0^2 = \bpm{ -0.05 & -0.025}^\top\,.
\end{aligned}\right\}
\end{equation}
With these choices hypotheses~\ref{ls:A1}--\ref{ls:A4} are satisfied.
For the forcing terms, first define
\begin{equation}\label{eq:sawtooth}
s(t) : = -1 + \frac{{\rm mod}(t, 2\pi)}{\pi} \quad \text{and} \quad \zeta(t) := t \e^{-3t/2}\;\,.
\end{equation}
Note that the function~$s$ is~$2\pi$-periodic, equal to~$-1$ when~$t=0$, and is continuous from the right. The graph of~$s$ is a (discontinuous) ``sawtooth wave''. Evidently,~$\zeta \in C_0(\mR_+, \mR)$.
Define the forcing terms
\begin{equation}\label{eq:msd_forcing_1}
v_{\rm p}(t) :=  s(0.75t) \bpm{0\\1}, \quad v_{\rm s}(t) := \big(s(0.75t) + s(0.75\sqrt{2}t)\big)\bpm{0\\1},  \quad t \geq 0\,, 
\end{equation}
as well as
\begin{equation}\label{eq:msd_forcing_2}
 v_{\rm ap}(t) := \big(\sin(2\sqrt{2} \pi t) + \sin(2 \pi t)\big)\bpm{0\\1},\quad v_{\rm aap} := v_{\rm s} + \zeta\bpm{0\\1}, \quad t \geq 0\,, 
\end{equation}
\end{subequations}
The forcing terms are structured so that only the second mass is subject to an external force. The functions~$v_{\rm p}$,~$v_{\rm ap}$ and~$v_{\rm s}$ are periodic, almost periodic, and Stepanov almost periodic, respectively. Graphs of the second component of~$v_{\rm p}$ and~$v_{\rm s}$ are contained in Figure~\ref{fig:vp_vs}. The function~$v_{\rm ap}$ is almost periodic as the sum of two periodic functions, but is not periodic as the periods of the summands are not commensurate. Furthermore, note that~$v_{\rm s}$ equals the sum of two periodic functions, so is Stepanov almost periodic. However,~$v_{\rm s}$ is not periodic, for the same reasons as with $v_{\rm ap}$, and is also not almost periodic as not continuous. Finally, the function~$v_{\rm aap}$ is asymptotically almost periodic. 

Numerical simulation results are plotted in Figures~\ref{fig:msd_p}--\ref{fig:msd_s_as}. We let~$z_i(t; z_i(0), v)$ denote the (values of the) solution of~\eqref{eq:coupled_msd} with model data~\eqref{eq:msd_coupled_data} subject to initial condition~$z_i(0)$ and forcing term~$v$. Figures~\ref{fig:p_1} and~\ref{fig:p_2} plot~$z_i(t; z_0^i, v_{\rm p})$ and~$\dot z_i(t; z_0^i, v_{\rm p})$ against~$t$, respectively, subject to either~$z_i(0) = x_0^i$ or~$0$.
Figures~\ref{fig:ap_1} and~\ref{fig:ap_2} plot~$z_i(t; 0, v)$ and~$\dot z_i(t; 0, v)$ against~$t$, respectively, subject to either~$v = v_{\rm ap}$ or~$v_{\rm aap}$. 
Figure~\ref{fig:msd_s_as} plots~$\dot z_i(t; 0, v)$ against~$t$, respectively, subject to either~$z_i(0) = x_0^i$ or~$0$ and~$v = v_{\rm s}$. We plot only $\dot z_i$ here for brevity. In each case, and in accordance with Proposition~\ref{prop:periodic'}, the respective~$z_i$ and~$\dot z_i$ are seen to converge to one another. The limiting behaviour appears periodic (Figure~\ref{fig:msd_p}) or almost periodic (Figures~\ref{fig:msd_ap}--\ref{fig:msd_s_as}).

%
%
We conclude this example by commenting that the current results apply to multiple ($>2$) connected damped mass-spring systems, which generalise the coupled systems presented here. \hfill~$\square$
\end{example}

%
%
\begin{figure}[h!]
    \centering
    \begin{subfigure}{0.32\textwidth}
     \centering
    \includegraphics[width = 0.98 \textwidth]{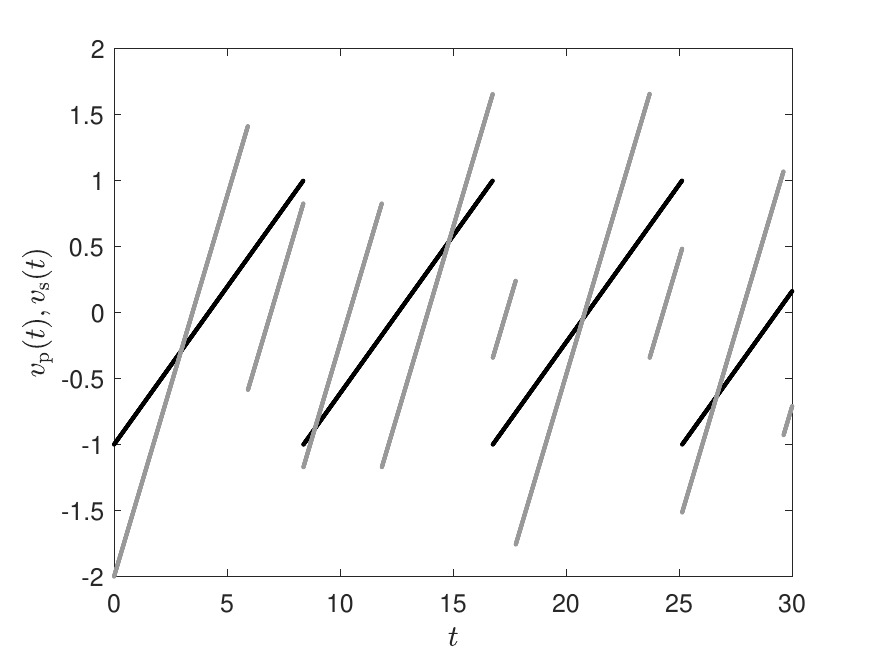}
    \caption{} 
    \label{fig:vp_vs}    
    \end{subfigure}
    \begin{subfigure}{0.32\textwidth}
     \centering
    \includegraphics[width = 0.98 \textwidth]{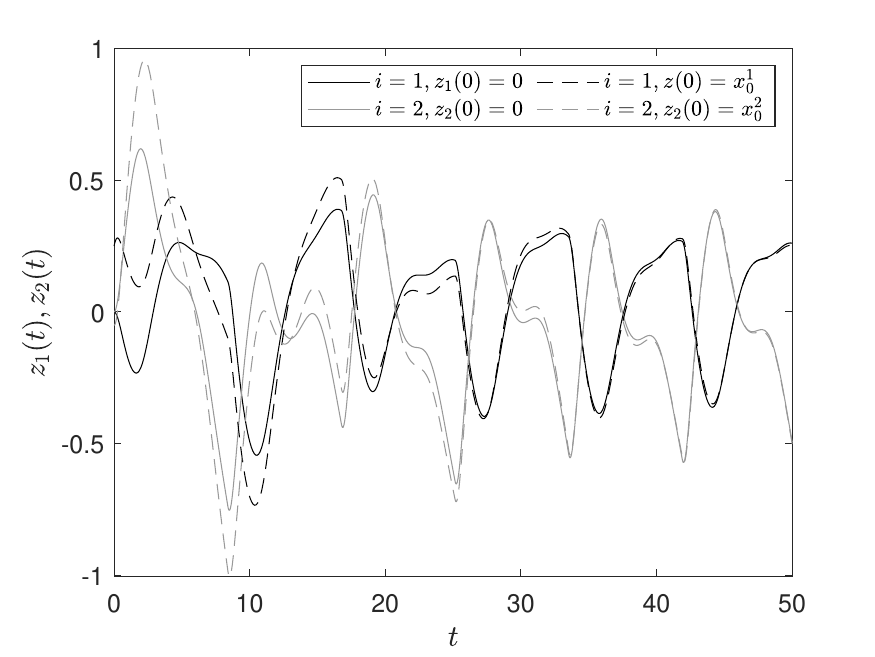}
    \caption{} 
    \label{fig:p_1}    
    \end{subfigure}
    \begin{subfigure}{0.32\textwidth}
     \centering
    \includegraphics[width = 0.98 \textwidth]{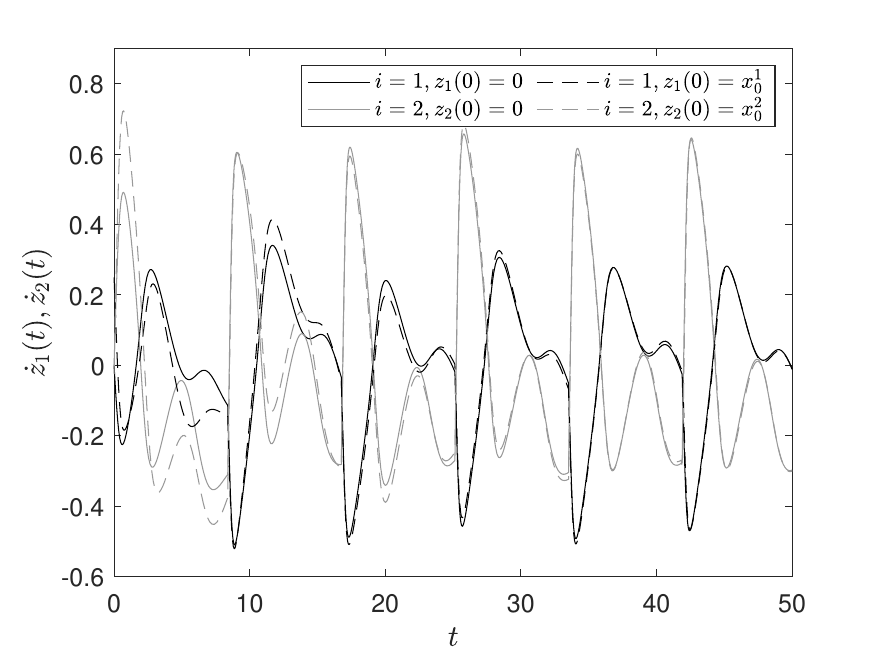}
    \caption{} 
    \label{fig:p_2}    
    \end{subfigure}
    \caption{Numerical results from Example~\ref{ex:two_mass}. (a) Graph of the functions~$v_p$ (black) and~$v_{\rm s}$ (grey). (b) Graphs of~$z_i(t; x_0, v_{\rm p})$ against $t$. (c) Graphs of~$\dot z_i(t; x_0, v_{\rm p})$. See main text for full description.}
    \label{fig:msd_p}
\end{figure}

%
%
\begin{figure}[h!]
    \centering
    \begin{subfigure}{0.975\textwidth}
     \centering
    \includegraphics[width = 0.98 \textwidth]{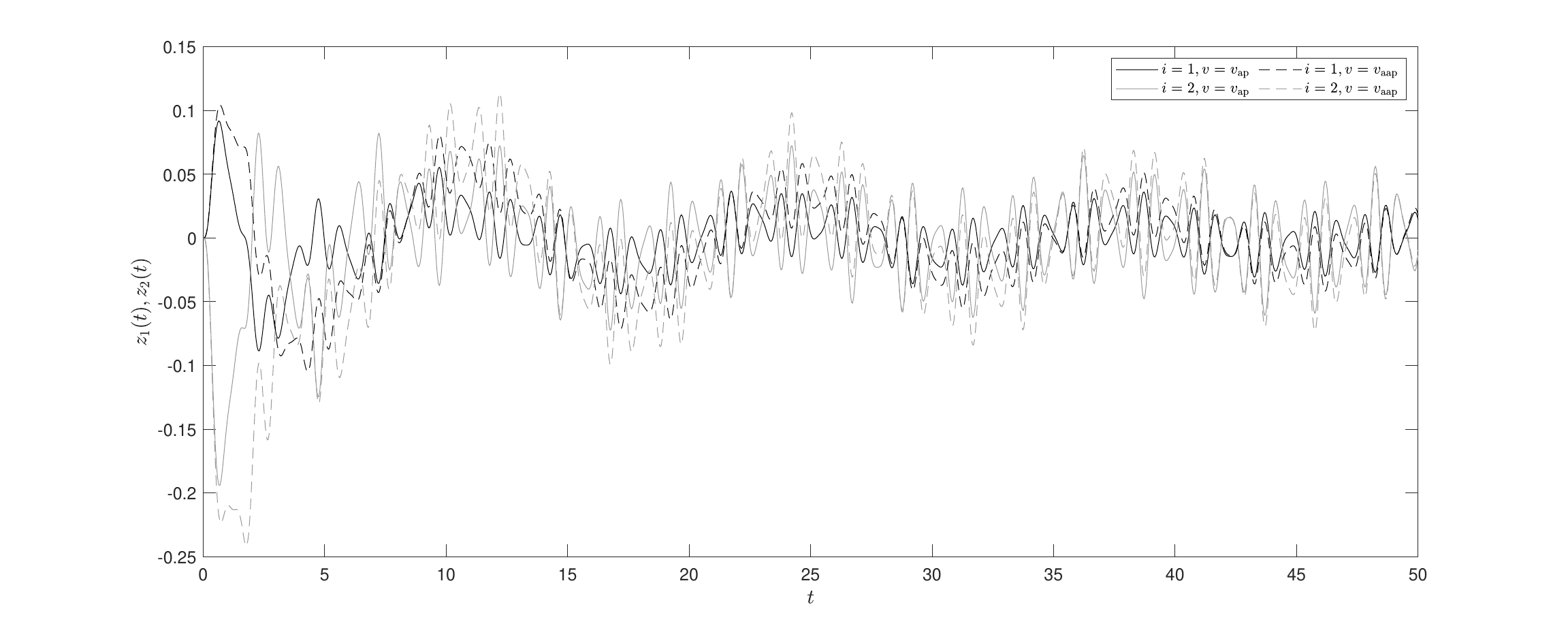}
    \caption{} 
    \label{fig:ap_1}    
    \end{subfigure}\\
    \begin{subfigure}{0.975\textwidth}
     \centering
    \includegraphics[width = 0.98 \textwidth]{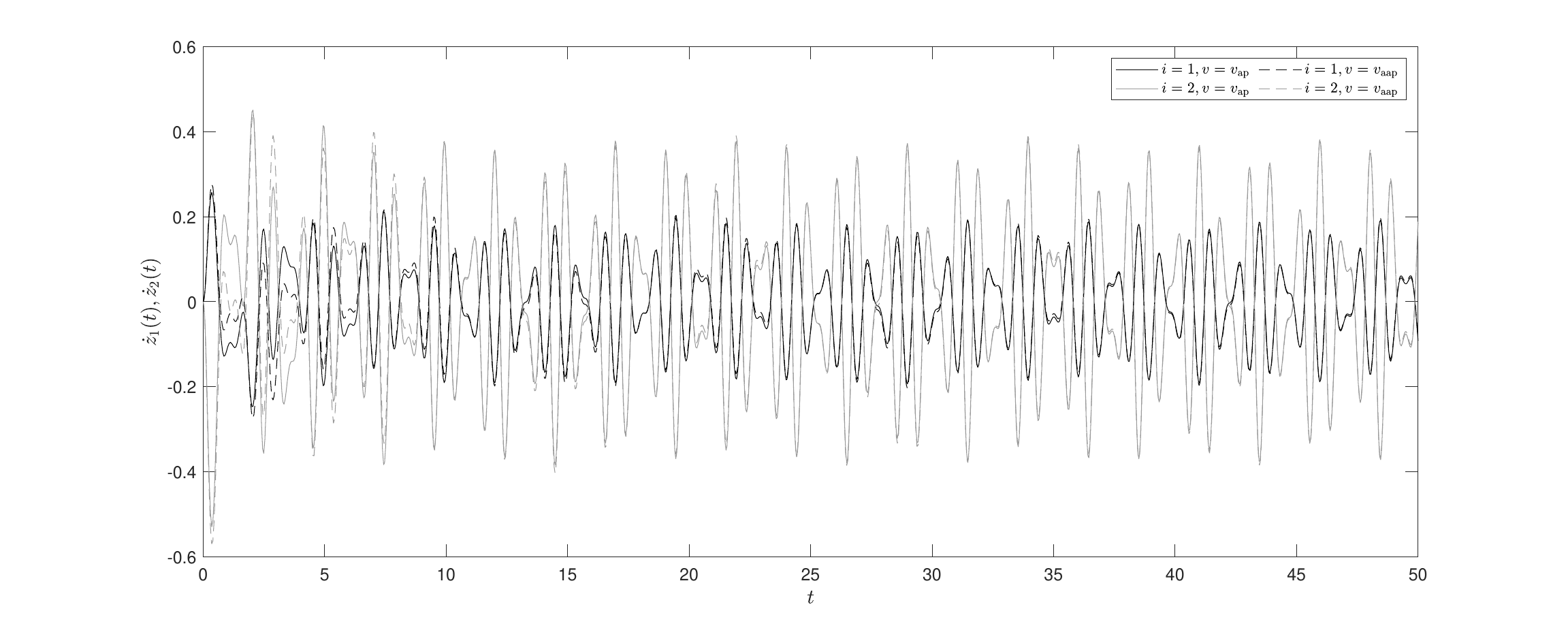}
    \caption{} 
    \label{fig:ap_2}    
    \end{subfigure}
    \caption{(a) Graphs of~$z_i(t; 0, v)$ against $t$. (b) Graphs of~$\dot z_i(t; 0, v)$ against $t$. See legend or main text in Example~\ref{ex:two_mass} for full description.} 
    \label{fig:msd_ap}
\end{figure}

\clearpage
%
%
\begin{figure}[h!]
     \centering
    \includegraphics[width = 0.98 \textwidth]{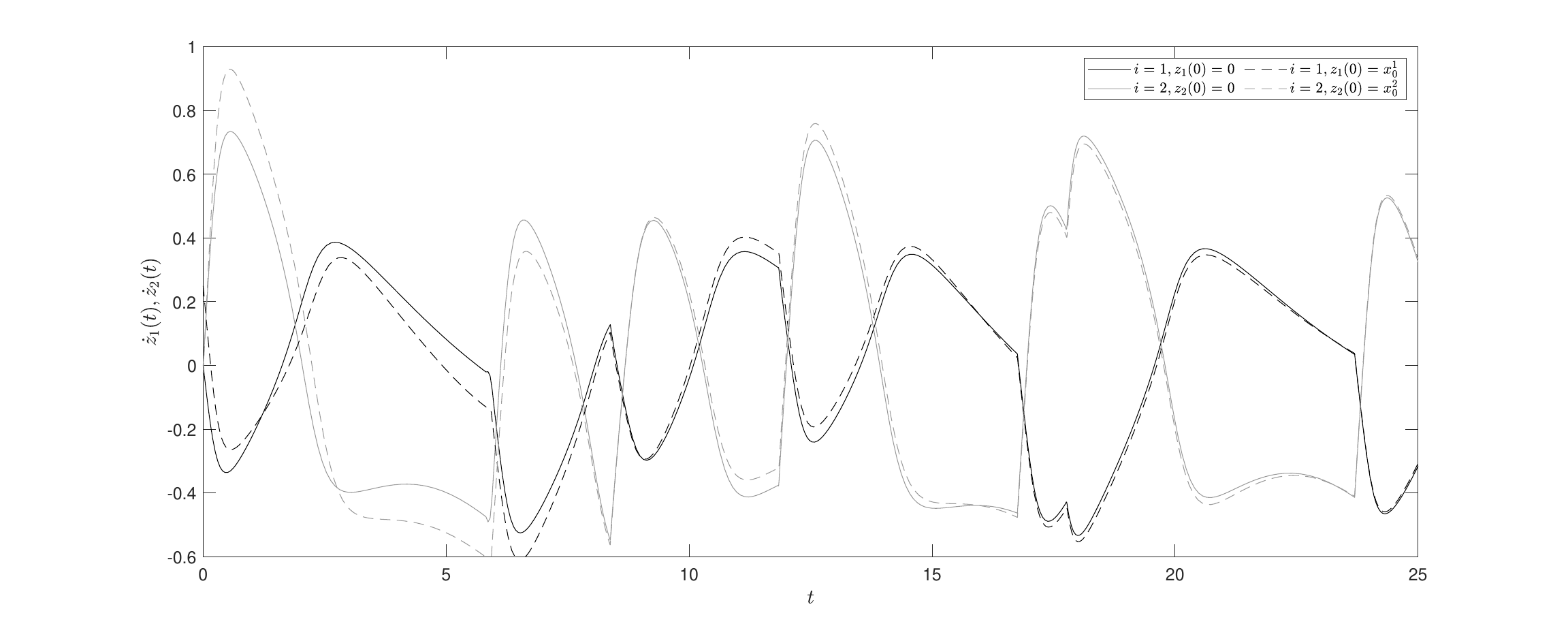}
    \caption{Graphs of~$\dot z_i(t; z_i(0), v_{\rm s})$ against $t$. See legend or main text in Example~\ref{ex:two_mass} for full description.} 
    \label{fig:msd_s_as}
\end{figure}

%
%
\begin{example}\label{ex:pa}
Consider the following model for the vertical displacement~$z$ of a point absorber Wave Energy Converter (WEC) moving in the heave direction only
\begin{subequations}\label{eq:wec_general}
\begin{equation}\label{eq:wec_model}
m \ddot z + f_{\rm rad} + f_{\rm vis} + f_{\rm buoy}  - f_{\rm ex}- f_{\rm PTO}+ f_{\rm moor} = 0\,.
\end{equation} 
The terms in~\eqref{eq:wec_model} are described in Table~\ref{tab:wec_terms}.

\begin{table}[h!]%
\centering
\begin{tabular}{|L{2cm}|L{4.5cm}|} \hline
{\bfseries Term} & {\bfseries Description} \\ \hline 
$m$ & mass of device \\
$f_{\rm rad}$ & radiation force \\
$f_{\rm buoy}$ & buoyancy force \\
$f_{\rm vis}$ & viscous (damping) force \\
$f_{\rm ex}$ & external excitation force \\
$f_{\rm PTO}$ & power take-off force \\
\hline
\end{tabular}
\caption{Description of terms appearing in equation~\eqref{eq:wec_general}}
\label{tab:wec_terms}
\end{table}

Despite included in~\eqref{eq:wec_model} for correctness, we shall in fact for simplicity assume throughout that the mooring force is equal to zero, and so it shall not play a role.  We assume that the buoyancy term is a linear function of displacement, that is,
\begin{equation}\label{eq:linear_buoyancy}
 f_{\rm buoy} = k z\,,
 \end{equation}
for positive constant~$k = g \rho S_w$, the product of acceleration due to gravity~$g$, the water density~$\rho$, and the surface area of the body cut at the mean water level~$S_w$.

In wave-energy conversion applications, the power take-off force is a control variable. 
The excitation force~$f_{\rm ex}$ in~\eqref{eq:wec_model} is a external forcing term, which would be denoted~$v$ in the notation used across the rest of this work. 

The radiation force is usually described mathematically by the Cummins'~\cite{cummins1962impulse} equation
\begin{equation}\label{eq:cummins_equation}
 f_{\rm rad}(t) =  m_\infty \ddot z(t) + \int_0^t h(t-s) \dot z(s) \, \rd s = m_\infty \ddot z + (h_r \ast \dot z)(t)\,, 
\end{equation}
where~$\ast$ denotes convolution, the positive constant~$m_\infty$ is the so-called ``added mass'', and~$h_r$ for some non-parametric kernel (or impulse response). This kernel is typically determined numerically using hydrodynamic software such as the WAMIT~\cite{WAMIT} or FOAMM~\cite{pena2019finite} toolboxes; see~\cite{nuimeprn12466}.

We assume that the viscous drag term~$f_{\rm vis}$ is nonlinear and of the form~\eqref{eq:f_power_d}, as proposed by the experimental law in~\cite{morison1950force}.

To fit the framework of the present paper, we assume a finite-dimensional state-space approximation of order~$n_{\rm r}$ of the convolution term in~\eqref{eq:cummins_equation}, yielding
\begin{equation}
\dot{x}_{\rm r}  =A_{\rm r} x_{\rm r}+B_{\rm r} \dot z,   \quad y_{\rm r} = B_{\rm r}^\top x_{\rm r}\,, \label{eq:radiation_ss}
\end{equation}
and, consequently
\begin{equation}\label{eq:cummins_equation_2}
f_{\rm rad}(t) =  m_\infty \ddot z(t) + y_{\rm r}(t)\,.
\end{equation}
\end{subequations}
The state-space system~\eqref{eq:radiation_ss} is assumed to be stable and passive, here meaning that~$A_{\rm r}$ is Hurwitz and that
\begin{equation}\label{eq:radiation_ss_lmi}
    A_{\rm r}^\top  + A_{\rm r} \leq  0\,,
\end{equation} 
so that~$I$ is a solution of the LMI~\eqref{eq:P_lmi_condition} for the triple~$(A_{\rm r}, B_{\rm r}, B_{\rm r}^\top)$.
Setting~$M := m+m_\infty$ and
\[ x : = \bpm{z \\ \dot z \\ x_{\rm r}}, \quad A : = \bpm{0 & 1 & 0 \\ -k/M & 0 & -B_{\rm r}^\top/M\\ 0 & B_{\rm r} & A_{\rm r}}, \quad B := \bpm{0 \\ 1/M\\0}, \quad C := \bpm{0 & 1 & 0}\,, \]
the PA WEC model~\eqref{eq:wec_general} is of the form
\[ \dot x = Ax - Bf(Cx) + B(v + u)\,,\]
that is, a Lur'e differential equation~\eqref{eq:lure} with~$v$ there replaced by~$u+v$. We verify the hypotheses of our main results. Note that as~\eqref{eq:wec_general} basically comprises a damped mass in feedback with a stable, passive linear control system, there is considerable overlap with the previous examples. Indeed, in light of~\eqref{eq:radiation_ss_lmi}, a straightforward calculation gives that
\[ P : = \bpm{k & 0 & 0 \\ 0 & M & 0 \\ 0 & 0 & I}\,,\]
is a symmetric, positive-definite solution of~\eqref{eq:P_lmi_condition}. The pair~$(C,A)$ is Hurwitz, as can be shown similarly to the argument used in Example~\ref{ex:two_mass}. It has already been established in Example~\ref{ex:power} that~$f$ in~\eqref{eq:f_power_d} satisfies hypotheses~\ref{ls:A1}--\ref{ls:A4}, and~\ref{ls:A5} as well if~$f$ contains a linear term with positive coefficient.

Roughly speaking, in the context of wave-energy, the external signal~$v$ will typically be almost periodic, or at least well-approximated by almost periodic signals. If the control term~$u$ is chosen to also be almost periodic, then Propositions~\ref{prop:periodic} and~\ref{prop:periodic'} apply to~\eqref{eq:wec_general}. These results facilitate the construction of what might be termed ``frequency domain'' methods for such models, and are applied to~\eqref{eq:wec_general} in the context of energy-maximisation optimal control problems in~\cite{guiver2024ccta}. \hfill~$\square$
\end{example}

\section{Summary}\label{sec:summary}

An Input-to-State Stability result has been presented for systems of forced Lur'e differential inclusions which are the feedback connection of two, in some sense, passive components, captured via the linear matrix inequality condition~\eqref{eq:P_lmi_condition}, and certain sign- and sector-conditions on the inclusion~\eqref{eq:F_passive_inclusion}.  Our results for Lur'e inclusions are inspired by corresponding results for forced systems of Lur'e differential equations considered in the seminal work~\cite{arcak2002input}. Our main results for Lur'e inclusions are Theorems~\ref{thm:iss_inclusion} and~\ref{thm:iiss_inclusion}. Interestingly, this latter theorem shows that assumptions which are known to be sufficient for global asymptotic stability are, in fact, sufficient for the {\em a priori} stronger Integral Input-to-State (IISS) property to hold. We comment that it is known that for forced nonlinear differential equations, the IISS property is not equivalent in general to global asymptotic stability of the zero solution of the unforced system 0-GAS (plus forward completeness); see~\cite[Section V]{angeli2000characterization}. 

As with the approach adopted in~\cite{gilmore2021incremental}, Input-to-State Stability results for differential inclusions pave the way for ensuring {\em semi-global incremental} Input-to-State Stability results for systems of forced Lur'e differential equations, where certain uniformity properties with respect to the model data and nonlinear terms are ensured. The passivity assumption on the linear component remains the same as in the Lur'e inclusion case, whilst the condition on the nonlinear term becomes various strengthened versions of the classical monotonicity property. As discussed, in order to treat examples of practical interest, we are required to impose ``semi-global'' assumptions, and derive semi-global results, with Theorem~\ref{thm:incremental_passive_SG} being our main result for Lur'e equations. We reiterate that semi-global results are no less interesting or relevant in all practical situations. Furthermore, the hypotheses of the current work accommodate Lur'e equations which fall outside of the scope of~\cite{gilmore2020infinite,MR4052324,gilmore2021incremental} so that the present work is complementary to these. 

In addition to the independent interest of the stability results mentioned thus far, another motivation for the study is to establish the theoretical underpinnings of what may be termed frequency domain methods for forced Lur'e differential equations. Indeed, the results of Section~\ref{sec:ap} qualify and quantify the state-response to (almost) periodic forcing terms, including those which are Stepanov almost periodic. 
The frequency domain is often preferentially used in control engineering, particularly in the timely area of wave-energy conversion, and the present results may help overcome the nonlinearity barrier to deploying such methods. As mentioned in Example~\ref{ex:pa}, applications of the current results have been initiated in the wave-energy setting in~\cite{guiver2024ccta}.

\vspace{0.5cm}

{\bfseries Acknowledgement} 

The author thanks Prof. Hartmut Logemann (University of Bath) who commented on an early draft of this work.

{{\bfseries Funding Sources}}

Chris Guiver's contribution to this work has been supported by a Personal Research Fellowship from the Royal Society of Edinburgh (RSE), award~\#2168. Chris Guiver expresses gratitude to the RSE for their support.

{\bfseries Disclosure statement. }

The author reports there are no competing interests to declare. No generative AI was used in the production of this work.

{\bfseries Data availability statement}

Data sharing is not applicable to this article as no new data were created or analyzed in this study.

\section*{Appendices}
\appendix

We provide details not given in the main text.

\section{Further properties of correspondence $F$}\label{app:F_properties}

We provide arguments for the properties of the correspondence~$F$ defined in~\eqref{eq:F_passive_inclusion}.

{\bfseries~$F$ is nonempty-valued:} \; Evidently,~$F(0) = \{0 \} \neq \emptyset$. For~$y \neq 0$, we claim that~$w := \theta(\|y\|)y/\|y\| \in F(y)$. Indeed,
\[ \| w \| = \theta(\|y\|), \quad \langle w, y \rangle = \| y \|\theta(\| y \|) \geq \| y \| \alpha(\|y\|)\,,\]
and, if~$\|y \| > \mu$, then
\[ c\langle w, y \rangle = c\| y \|\theta(\| y \|) \geq c \mu \theta(\| y \|) \geq \theta(\| y \|) = \| w \|\,.\]
{\bfseries~$F$ is compact-valued:} \; It is obvious that~$F(y)$ is closed and bounded, hence compact, for all~$y \in \mR^m$.

 {\bfseries~$F$ is convex-valued:} \; Let~$y \in \mR^m$,~$w_1, w_2 \in F(y)$ and~$t \in [0,1]$. We compute that
\[ \| tw_1 + (1-t) w_2 \|\leq t \| w_1\| + (1-t) \| w_2\| \leq \big(t + (1-t)\big) \theta(\|y\|) = \theta(\|y\|)\,.\]
Similarly,
\[ \langle tw_1 + (1-t)w_2 ,y\rangle = t\langle w_1 , y\rangle + (1-t)\langle w_2 ,y\rangle \geq \big(t + (1-t)\big) \| y \| \alpha(\|y \|) = \| y \| \alpha(\|y\|)\,.\]
Finally, if~$\|y \| > \mu$,
\[ \| tw_1 + (1-t)w_2  \| \leq t \| w_1\| + (1-t) \| w_2\| \leq t \langle w_2,y\rangle + (1-t) \langle w_2,y\rangle = \langle tw_1 + (1-t)w_2,y\rangle\,.\]
We conclude that~$tw_1 + (1-t)w_2 \in F(y)$, so that~$F(y)$ is convex.

 {\bfseries~$F$ is upper-hemicontinuous:} \; Since~$F$ takes compact values, we use the characterisation of upper hemicontinuity in~\cite[Theorem 17.20, p.\ 565]{MR2378491}. In particular, let~$y \in \mR^m$ and suppose that~$(y_k, w_k) \in \mR^m \times F(y_k)$ satisfies~$y_k \to y$ as~$k \to \infty$. We need to show that~$w_k$ has a limit point in~$F(y)$. For which purpose, since 
 \[ \| w_k \| \leq \theta(\|y_k\|) \leq \theta(\|y\|) +1 \quad \forall \: k \in \mN, \; \text{sufficiently large}\]
 it follows that~$(w_k)_k$ is bounded and, hence, has a convergent subsequence with limit~$w \in \mR^m$, which we pass to without relabelling. We verify that~$w \in F(y)$. Since~$w_k \in F(y_k)$ for every~$k \in \mN$, and by continuity of the norm and~$\theta$, we have
 \[ \| w \| = \lim_{k \to \infty} \| w_k \| \leq \lim_{k \to \infty} \theta(\|y_k \|)  = \theta(\lim_{k \to \infty} \| y_k \|) = \theta(\|y\|)\,.\]
 Next, and as~$\alpha$ is continuous,
 \[ \langle w, y \rangle = \lim_{k \to \infty} \langle w_k, y_k \rangle \geq \lim_{k \to \infty} \| y_k \| \alpha(\| y_k\|) = \|y \| \alpha(\|y\|)\,.\]
Finally, if~$\| y \| \leq \mu$, then the above two inequalities show that~$w \in F(y)$. If instead~$\| y \| > \mu$, then~$\| y_k\| > \mu$ for all sufficiently large~$k \in \mN$. Therefore,
\[ \| w\| = \lim_{k \to \infty} \|w_k \| \leq \lim_{k \to \infty} c \langle w_k, y_k \rangle = c\langle w, y \rangle\,, \]
again showing that~$w \in F(y)$. We conclude that~$F$ is upper-hemicontinuous at~$y$. Since~$y \in \mR^m$ was arbitrary,~$F$ is upper-hemicontinuous on~$\mR^m$. The proof is complete.


\section{Proof of Lemma~\ref{lem:inclusion_F0_prep}}


The arguments that~$F_0$ is non-empty-, compact- and convex-valued, as well as upper-hemicontinuous, are the same as those for~$F$ given in Appendix~\ref{app:F_properties}, and so we do not repeat them here. It remains to show that~$F_0$ is locally Lipschitz, recall meaning that for all bounded, non-empty~$Y \subset \mR^m$, there exists~$L >0$ such that
\begin{equation}\label{eq:set_valued_local_lipschitz}
F_0(y_1)\subset F_0(y_2)+L\|y_1-y_2\| B(0,1) \quad \forall \: y_1,y_2\in Y\,.    
\end{equation}
Since~$F_0$ takes compact values, being locally Lipschitz is equivalent to~$F_0$ being locally Lipschitz as a map~$\mR^m \to (\sC, d_H)$, where~$(\sC, d_H)$ is the metric space of compact subsets of~$\mR^m$ equipped with the Hausdorff metric. For which purpose, let bounded, non-empty~$Y \subset \mR^m$ be given, and let~$y_1, y_2 \in Y$. Since~$F_0(0) = \{0\}$, the claim is trivial if either~$y_1 = 0$ or~$y_2 =0$. The following arguments are symmetric in~$y_1$ and~$y_2$. Thus, and without loss of generality, we may assume that~$\| y_2 \| \geq \|y_1\| >0$. 

For brevity, we write 
\[ \alpha_i := \alpha(\|y_i\|), \quad \theta_i := \theta(\|y_i\|) \quad \text{and} \quad r_i := r(\|y_i\|) =  \sqrt{\theta^2(\|y_i\|) - \alpha^2(\|y_i\|)}\,.\]
Our assumptions on~$y_1$ and~$y_2$ ensure that~$0 < \alpha_1 \leq \alpha_2$,~$0 < \theta_1 \leq \theta_2$ and~$r_1 \leq r_2$. Moreover, recall that~$r$ is non-decreasing and locally Lipschitz by hypothesis~\eqref{eq:technical}.

By orthogonal decomposition, we have that~$\mR^m = \langle y_i \rangle \oplus \langle y_i \rangle^\perp$ for~$i = 1,2$. Here $\langle \cdot \rangle$ denotes linear span. Therefore, any~$w_i \in F_0(y_i)$ may be expressed as
\begin{subequations}\label{eq:wi_F0}
\begin{equation}
    w_i = a_i \frac{y_i}{\|y_i\|} + b_i \frac{y_i^\perp}{\|y_i^\perp \|} \quad i = 1, 2\,,
\end{equation} 
for some non-zero~$y_i^\perp \in \langle y_i \rangle^\perp$ with~$a_i, b_i \in \mR$. There is no loss of generality in assuming that~$y_i^\perp \neq 0$, as~$b_i =0~$ is possible. These constants must satisfy
\begin{equation}\label{eq:wi_F0_i}
 a_i \geq \alpha_i \quad \text{so that} \quad \langle y_i, w_i \rangle \geq \|y_i \| \alpha_i\,,
 \end{equation}
and
\begin{equation}\label{eq:wi_F0_ii}
\lvert a_i \rvert^2 + \lvert b_i \rvert^2 \leq \theta^2_i\,.
\end{equation}
\end{subequations}
Note that when~\eqref{eq:wi_F0_ii} holds with equality, then~\eqref{eq:wi_F0_i} yields that~$\lvert b_i \rvert^2 \leq r_i^2$, with equality if~$a_i = \alpha_i$. Conversely, observe that any~$w_i \in \mR^m$ satisfying~\eqref{eq:wi_F0} belongs to~$F_0(y_i)$.

Let~$w_1$ be as in~\eqref{eq:wi_F0}. We consider two exhaustive cases.

{\sc Case 1.~$y_2$ is parallel to~$y_1$:} In this case~$y_2 = c y_1$ for some~$c \in \mR$. Let
\[ \Big\{ \frac{y_1}{\|y_1\|}, x_2, \dots , x_m \Big\} \quad \text{and} \quad \Big\{\frac{y_1^\perp}{\|y_1^\perp\|}, z_2, \dots, z_m \Big\}\,, \]
denote two bases for~$\mR^m$ of unit vectors, and define the linear operator~$R : \mR^m \to \mR^m$ by
\[ R \Big(\frac{y_1}{\|y_1\|} \Big) := \frac{y_1^\perp}{\|y_1^\perp\|}, \quad  R x_k := z_k \quad k \in \{2,\dots,m\}\,,\]
and extend to all of~$\mR^m$ by linearity. Note that~$\langle y_2, Ry_2\rangle = \langle c y_1, c R y_1 \rangle = c^2 \langle  y_1 , y_1^\perp \rangle (\|y_1\| /\| y_1^\perp\|) =0$.

{\sc Case 2.~$y_2$ is not parallel to~$y_1$:} If~$m=2$, then~$R y_1 = y_1^\perp$ for some rotation by~$\pm \pi/2$ (and possible scaling) operator~$R$. In this case~$y_2^\perp := R y_2$ is orthogonal to~$y_2$, and~$y_1^\perp$ and~$y_2^\perp$ are linearly independent, since~$y_1$ and~$y_2$ are. 

For~$m >2$, let~$y_2^\perp$ be orthogonal to~$y_2$ and linearly independent of~$y_1^\perp$ (such a choice is always possible). Thus, let
\[ \Big \{\frac{y_1}{\|y_1\|}, \frac{y_2}{\|y_2\|}, x_3, \dots , x_m \Big \} \quad \text{and} \quad \Big \{\frac{y_1^\perp}{\|y_1^\perp\|}, \frac{y_2^\perp}{\|y_2^\perp\|}, z_3, \dots , z_k \Big \}\,, \]
denote two bases for~$\mR^m$ of unit vectors, and define the linear operator~$R : \mR^m \to \mR^m$ by
\[ R \Big(\frac{y_i}{\|y_i\|} \Big) := \frac{y_i^\perp}{\|y_i^\perp\|}, \; i \in \{ 1,2\}, \quad R x_k := z_k \quad k \in \{3, \dots,  m\}\,,\]
and extend to all of~$\mR^m$ by linearity. In particular,~$Ry_i/\|y_i\| = y_i^\perp /\|y_i^\perp\|$ for~$i \in \{1,2\}$.

In either case, note that~$R$ is a bounded linear operator, independently of~$y_1$,~$y_2$, as 
\[ \Big \| R \Big(\frac{y_i}{\|y_i\|} \Big) \Big\| = 1, \; i \in \{1,2\}, \quad \| R x_k \| = \| z_k\| = 1 \quad k \in \{3, \dots,  m\}\,,  \]
and so~$R$ is bounded on a basis of~$\mR^m$, independently of~$y_1$ and~$y_2$. Define
\[ w_2 : =  a_2 \frac{y_2}{\|y_2\|} + b_2 \frac{y_2^\perp }{\| y_2^\perp \|} = a_2 \frac{y_2}{\|y_2\|} + b_2 \frac{R y_2}{\|y_2 \|}\,,\]
where~$(a_2,b_2)$ satisfy~\eqref{eq:wi_F0_i} and~\eqref{eq:wi_F0_ii}, so that~$w_2 \in F_0(y_2)$, but are otherwise to-be-determined. 

We estimate that
\begin{align*}
    \| w_1 - w_2 \|^2 & = \Big \| \Big( \frac{a_1y_1}{\|y_1\|} - \frac{a_2 y_2}{\|y_2\|} \Big)+ R\Big( \frac{ b_1 y_1}{\| y_1\|} - \frac{b_2 y_2}{\|y_2\|} \Big) \Big \|^2 \\
    & \leq 2 \Big \| \frac{a_1y_1}{\|y_1\|} - \frac{a_2 y_2}{\|y_2\|}  \Big\|^2 + 2 \Big\| R\Big( \frac{ b_1 y_1}{\| y_1\|} - \frac{b_2 y_2}{\|y_2\|} \Big)\Big \|^2 
    \\
    & \leq 4 \lvert a_1 \rvert^2 \Big\| \frac{y_1}{\|y_1\|} - \frac{y_2}{\|y_2\|} \Big\|^2 + 4 \lvert a_1 - a_2 \rvert^2 + 4\|R\|^2\lvert b_1 - b_2 \rvert^2 \notag \\
    & \qquad + 4 \lvert b_1 \rvert^2 \Big \| R \Big(\frac{y_1}{\|y_1\|} - \frac{y_2}{\|y_2\|} \Big)\Big\|^2\,.
\end{align*}
Since~$a_1$ and~$b_1$ are bounded on~$Y$,~$R$ is bounded, and~$z \mapsto z/\|z\|$ is locally Lipschitz on~$\mR^m \backslash\{0\}$, it is clear that the above is bounded by a scalar multiple of~$\| y_1 -y_2\|^2$ once it is established that
\begin{equation}\label{eq:LL_coefficient_condition}
    \lvert a_1 - a_2 \rvert^2 + \lvert b_1 - b_2 \rvert^2 \leq M^2\|y_1 - y_2\|^2\,,
\end{equation}
for some~$M >0$. For which purpose, the diagrams in Figure~\ref{fig:LL_diagrams}, which show regions of the~$(a,b)$-plane, are instructive. The conditions~\eqref{eq:wi_F0_i}--\eqref{eq:wi_F0_ii} translate to the pairs~$(a_i,b_i)$,~$i=1,2$ belonging to the regions~$1$ (green) and~$2$ (blue), respectively. The diagram is symmetric about the~$b=0$ axis, and only the upper half-plane~$b\geq 0$ is shown. Recall the assumption that~$\| y_2 \| \geq \|y_1\|$.

%
%
\begin{figure}[h!]
\centering
\begin{subfigure}[]{0.48\textwidth}
\centering
\begin{tikzpicture}
\draw[->] (-0.5,0) -- (5.0,0) node[right] {$a$} coordinate(x axis);
\draw[->] (0,-0.5) -- (0,5.0) node[above] {$b$} coordinate(y axis);

\def\aone{1};
\def\atwo{3};

\def\rone{2};
\def\rtwo{4};

\def\cone{{sqrt(3)}}; 
\def\ctwo{{sqrt(7)}}; 
\def\angone{deg(1.0472)}; 
\def\angtwo{deg( 0.7227)}; 

\draw [black,thick, dotted,domain=0:(1.2*\rone)] plot (\x, {(sqrt(3))*\x});

\draw [black,thick, dotted,domain=0:4.25] plot (\x, {0.5*\x});
\draw[fill = black] (1.7889,0.5*1.7889)  circle (1.5pt);
\draw[fill = black] (3.5777,0.5*3.5777)  circle (1.5pt);

\draw [thick] (0,\rone) arc(90:0:\rone);
\draw [thick] (0,\rtwo) arc(90:0:\rtwo);

\draw [black,thick,dashed] (\aone,-0.5) -- (\aone,4.5);

\draw [black,thick,dashed] (\atwo,-0.5) -- (\atwo,4.5);

\draw[-,fill opacity=0.25, fill=OliveGreen] (\aone,\cone) arc [start angle=\angone,end angle= 0,radius=\rone] -- (\aone,0) -- cycle;
\draw[-,fill opacity=0.25, fill=blue] (\atwo,\ctwo) arc [start angle=\angtwo,end angle= 0,radius=\rtwo] -- (\atwo,0) -- cycle;

\node (L2) at (-0.25,4) {{\scriptsize~$\theta_2$}};
\node (L1) at (-0.25,2) {{\scriptsize$\theta_1$}};
\node (0) at (-0.25,-0.25) {{\scriptsize$0$}};

\node (La2) at (\atwo,-0.75) {{\scriptsize$\alpha_2$}};
\node (La1) at (\aone,-0.75) {{\scriptsize$\alpha_1$}};

\node (cross) at (\rone,3.25) {{\scriptsize~$a_3$}};

\node (r1) at (1.25, 1) {$1$};
\node (r2) at (3.3, 1) {$2$};

\end{tikzpicture}  
\caption{}
\label{fig:LL_diagrams_a}
\end{subfigure}
%
\begin{subfigure}[]{0.48\textwidth}
\centering
\begin{tikzpicture}
\draw[->] (-0.5,0) -- (5.0,0) node[right] {$a$} coordinate(x axis);
\draw[->] (0,-0.5) -- (0,5.0) node[above] {$b$} coordinate(y axis);
\node (0) at (-0.25,-0.25) {{\scriptsize$0$}};

\def\aone{1};
\def\atwo{1.5};

\def\rone{2};
\def\rtwo{4};

\def\cone{{sqrt(3)}}; 

\def\ctwo{3.7081}
\def\angone{deg(1.0472)}; 
\def\angtwo{deg(1.1864)}; 

\draw [black,thick, dotted,domain=0:(1.2*\rone)] plot (\x, {(sqrt(3))*\x});

\draw[fill = black] (\aone,\cone)  circle (1.5pt);
\draw[fill = black] ({\atwo-0.05},{\ctwo-0.05})   rectangle ++(0.1,0.1);
\draw [thick] (0,\rone) arc(90:0:\rone);
\draw [thick] (0,\rtwo) arc(90:0:\rtwo);

\draw [black,thick,dashed] (\aone,-0.5) -- (\aone,4.5);

\draw [black,thick,dashed] (\atwo,-1) -- (\atwo,4.5);

\draw[-,fill opacity=0.25, fill=OliveGreen] (\aone,\cone) arc [start angle=\angone,end angle= 0,radius=\rone] -- (\aone,0) -- cycle;
\draw[-,fill opacity=0.25, fill=blue] (\atwo,\ctwo) arc [start angle=\angtwo,end angle= 0,radius=\rtwo] -- (\atwo,0) -- cycle;

\node (L2) at (-0.25,4) {{\scriptsize~$\theta_2$}};
\node (L1) at (-0.25,2) {{\scriptsize$\theta_1$}};

\node (La1) at (\aone,-0.75) {{\scriptsize$\alpha_1$}};
\node (La2) at (\atwo,-1.25) {{\scriptsize$\alpha_2$}};

\node (cross) at (1.2*\rone,3.5) {{\scriptsize~$a_3$}};
\node (r1) at (1.25, 1) {$1$};
\node (r2) at (3.3, 1) {$2$};

\draw[pattern=north west lines] (\atwo,{(sqrt(3))*\atwo}) -- (\atwo,3.7081) -- (2,3.4641) -- cycle;

\end{tikzpicture}  
\caption{}
\label{fig:LL_diagrams_b}
\end{subfigure}
    \caption{Section of the~$(a,b)$-plane of coefficients appearing in~\eqref{eq:wi_F0}.}
    \label{fig:LL_diagrams}
\end{figure}
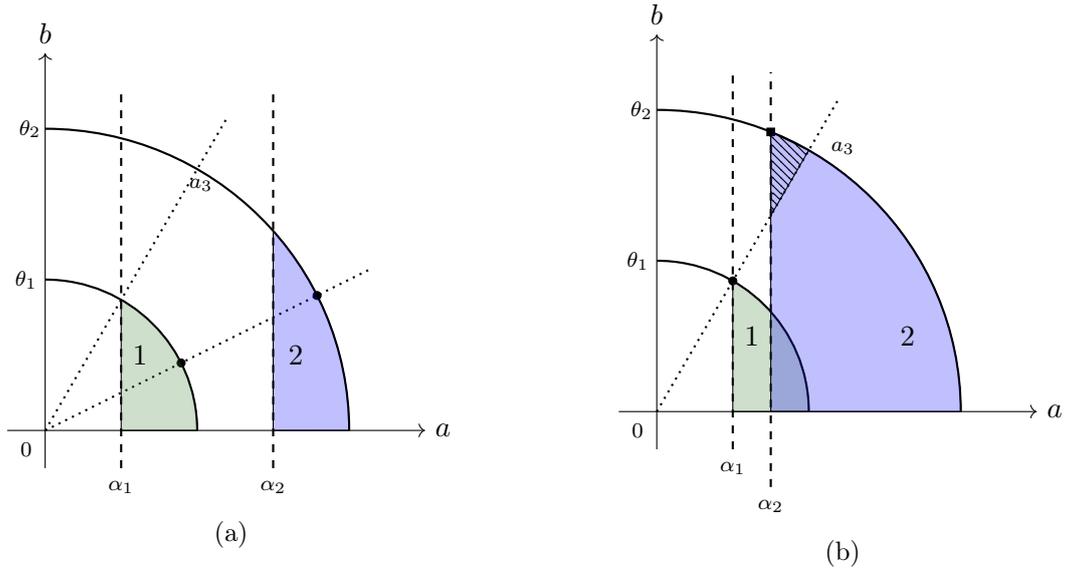
We are considering first where~$w_1 \in F_0(y_1)$ is given, so that~$(a_1,b_1)$ belong to region 1 (including its boundary). Defining~$a_2 := \max\{a_1, \alpha_2\}$ and~$b_2 := b_1$,
we have that~$w_2 \in F_0(y_2)$ and, evidently,
\[ \lvert a_1 - a_2 \rvert^2 + \lvert b_1 - b_2 \rvert^2 \leq \lvert \alpha_2 - \alpha_1 \rvert^2 \leq L_\alpha \|y_1 - y_2 \|^2\,,\]
where~$L_\alpha$ is a local Lipschitz constant for~$\alpha$ on~$Y$. Hence, we have shown that
\begin{equation}\label{eq:LL_dH1}
\sup_{w_1 \in F_0(y_1)} {\rm dist}(w_1, F_0(y_2)) \leq \sup_{w_1 \in F_0(y_1)} \| w_1 - w_2\| \leq  L_1 \|y_1 - y_2\|\,,
\end{equation}
for some~$L_1 >0$.

Conversely, let~$w_2 \in F_0(y_2)$, so that~$w_2$ is of the form~\eqref{eq:wi_F0} for some non-zero~$y_2^\perp \in \langle y_2 \rangle^\perp$, and some~$(a_2,b_2) \in \mR^2$.  We construct bases of~$\mR^m$ and operator~$R$ analogously as above. Define
\[ w_1 := a_1 \frac{y_1}{\|y_1\|} +  b_1 \frac{y_1^\perp }{\| y_1^\perp \|} = a_1 \frac{y_1}{\|y_1\|}  + b_1 \frac{R y_1}{\| y_1\|}\,,\]
(where~$R y_1$ is orthogonal to~$y_1$, by construction of~$R$) and where~$(a_1,b_1)$ satisfy~\eqref{eq:wi_F0_i} and~\eqref{eq:wi_F0_ii}, so that~$w_1 \in F_0(y_1)$, but are otherwise to-be-determined.

It is routine to show that 
\[ a_3 : = \frac{\alpha_1 \theta_2}{\theta_1}\,,\]
is the~$a$-coordinate of the intersection of the straight line through~$(0,0)$ and~$(\alpha_1, r_1)$ with the circle centred at zero of radius~$\theta_2$. This line is depicted in both panels of Figure~\ref{fig:LL_diagrams}. We consider two exhaustive cases.

{\sc Case 1:~$\alpha_2 \geq a_3$.} This scenario is depicted in Figure~\ref{fig:LL_diagrams_a}. In this case, given~$(a_2,b_2)$ in region 2, we choose~$(a_1,b_1)$ such that~$a_1^2 + b_1^2 = \theta_1^2$ to lie on the same straight line through zero, as illustrated by the lower dotted line in Figure~\ref{fig:LL_diagrams_a}. It is then clear that
\[ \lvert a_1 - a_2 \rvert^2 + \lvert b_1 - b_2 \rvert^2 \leq \lvert \theta_2 - \theta_1 \rvert^2 \leq M_1^2 \|y_1 - y_2 \|^2\,,\]
for some~$M_1 >0$, since~$\theta$ is locally Lipschitz. 

{\sc Case 2:~$\alpha_2 < a_3$.} This scenario is depicted in Figure~\ref{fig:LL_diagrams_b}. Now the point~$(a_2,b_2)$ is constrained to lie in the hashed region. The point~$(a_1,b_1) = (\alpha_1, r_1)$ is shown with a dot. It is clear that~$(a_2,b_2) = (\alpha_2, r_2)$, shown with a square, is that in the hatched region which maximises~$\| a - b\|^2$. In this case, we have that
\begin{align*}
    \lvert a_1 - a_2 \rvert^2 + \lvert b_1 - b_2 \rvert^2 & = \lvert \alpha_1 - \alpha_2 \rvert^2 + \lvert r_1 - r_2 \rvert^2 \leq M_2^2 \| y_1 - y_2 \|^2\,,
\end{align*}
for some~$M_1 >0$, since both~$\alpha$ and~$r$ are locally Lipschitz. We conclude that~\eqref{eq:LL_coefficient_condition} holds with~$M := \max\{M_1,M_2\}$. Since~$w_2 \in F_0(y_2)$ was arbitrary,
\begin{equation}\label{eq:LL_dH2}
\sup_{w_2 \in F_0(y_2)} {\rm dist}(w_2, F_0(y_1)) \leq \sup_{w_2 \in F_0(y_2)} \| w_1 - w_2\| \leq  L_2 \|y_1 - y_2\|\,, 
\end{equation}
for some~$L_2 >0$.
The conjunction of~\eqref{eq:LL_dH1} and~\eqref{eq:LL_dH2} shows that
\[ d_H(F_0(y_1),F_0(y_2)) \leq \max\big\{L_1, L_2\big\} \| y_1 - y_2 \| \quad \forall \: y_1, y_2 \in Y\,,\]
as required. The proof is complete. \hfill $\square$

\section{Proof of Lemma~\ref{lem:sg_technical_preparation}}

(i) \; Define~$g : \mR^m \to \mR_+$ by
\[ g(y):= \left\{ \begin{aligned}
    &0 &  y &= 0\\
    &\inf_{\substack{z \in \Gamma}} \frac{\langle y, f(y + z) - f(z) \rangle}{\|y \|} & y &\neq 0\,.
\end{aligned} \right.\]
The function~$g$ trivially satisfies~$g(0)=0$, with~$g(y) > 0$ for all~$y \neq 0$ by hypothesis and, further, ~$g$ is continuous. Continuity of~$g$ is a consequence of~$f$ being locally Lipschitz and the imposed hypotheses. Then define~$\alpha : \mR_+ \to \mR_+$ by
\[ \alpha(s) : = \inf_{\|y \| =s} g(y)\,,\]
which belongs to~$\sP$, by continuity of~$g$. With these definitions we obtain the desired lower bounds
\[\frac{\langle y, f(y + z) - f(z) \rangle}{\|y \|} \geq g(y) \geq \alpha(\|y\|) \quad \forall \: y \in \mR^m, \; \forall \: z \in \Gamma\,,\]
so that
\[ \langle y, f(y + z) - f(z) \rangle \geq \| y\| \alpha(\|y\|) \quad \forall \: y \in \mR^m, \; \forall \: z \in \Gamma\,.\]
(ii) \; We first claim that the current hypotheses lead to~\eqref{eq:A3_sufficient} holding for all~$z \in \mR^m$. For which purpose, let~$z \in \mR^m$ and set~$\zeta := y + z - z_0$. We have that
\begin{align*}
    \frac{\langle y, f(y + z) - f(z) \rangle}{\|y \|} & = \frac{\langle y, f(\zeta + z_0) - f(z_0) \rangle}{\|y \|} + \frac{\langle y, f(z_0) - f(z) \rangle}{\|y \|} \notag \\
& \geq \frac{\langle y, f(\zeta + z_0) - f(z_0) \rangle}{\|y \|}  - \| f(z) - f(z_0)\| \\ 
& \to \infty\,, \notag
\end{align*}
as~$\|y \| \to \infty$ (which occurs if, and only if,~$\| \zeta \| \to \infty$). 

The claim that~$\alpha$ can be chosen to belong to~$\sK_\infty$ can now be shown similarly to as in~\cite[Lemma 2.2]{gilmore2021incremental}. We omit the remaining details. \hfill $\square$

%
%


{\footnotesize
\def\cprime{$'$} \def\cprime{$'$}

}

\end{document}